\newlength{\mylength}
\renewcommand{\le}{\leqslant}
\renewcommand{\ge}{\geqslant}
\theoremstyle{plain}
\newtheorem{thm}{\bf Theorem}[section]
\newtheorem{df}[thm]{\bf Definition}
\newtheorem{prop}[thm]{\bf Proposition}
\newtheorem{coro}[thm]{\bf Corollary}
\newtheorem{lem}[thm]{\bf Lemma}
\newtheorem{conj}[thm]{\bf Conjecture}
\theoremstyle{definition}
\newtheorem{ex}[thm]{\bf Example}
\newtheorem{remark}[thm]{\bf Remark}
\newtheorem{definition}[thm]{\bf Definition}
\newtheorem{question}[thm]{\bf Qusetion}
\newtheorem*{convention}{\bf Convention}
\newcommand{\nc}{\newcommand}
\newenvironment{answer}
{\noindent{\bf Answer}\hs{1ex}}
{\hfill \qedsymbol}
\nc{\Prop}{\begin{prop}}
\nc{\enprop}{\end{prop}}
\nc{\Lemma}{\begin{lem}}
\nc{\enlemma}{\end{lem}}
\nc{\Exam}{\begin{ex}}
\nc{\enexam}{\end{ex}}
\nc{\Th}{\begin{thm}}
\nc{\enth}{\end{thm}}
\nc{\Def}{\begin{definition}}
\nc{\edf}{\end{definition}}
\nc{\Conj}{\begin{conj}}
\nc{\enconj}{\end{conj}}
\nc{\Quest}{\begin{question}}
\nc{\enquest}{\end{question}}
\nc{\Rem}{\begin{remark}}
\nc{\enrem}{\end{remark}}
\nc{\Ans}{\begin{answer}}
\nc{\enans}{\end{answer}}
\newenvironment{red}
{\relax\color{red}}
{\hspace*{.5ex}\relax}
\nc{\berm}{\ber{}\marginnote{\fbox{\scshape\lowercase{M}}}}
\nc{\bermn}{\ber{}\marginnote{\fbox{\scshape\lowercase{MNew}}}}
\nc{\berMH}{\ber{}\marginnote{\fbox{\scshape\lowercase{MH}}}}
\nc{\berE}{\ber{}\marginnote{\fbox{\scshape\lowercase{E}}}}
\newcommand{\cmtM}[1]{\begin{red}{}\marginnote{\fbox{\scshape\lowercase{M}}} (#1)\end{red}}  
\newcommand{\cmtm}[1]{\cmtM{#1}}
\newcommand{\cmtMH}[1]{\begin{red}{}\marginnote{\fbox{\scshape\lowercase{MH}}} (#1)\end{red}}  %
\nc{\on}{\operatorname}
\newcommand{\Q}{\mathbb {Q}}
\newcommand{\Z}{\ms{2mu}{\mathbb Z}}
\newcommand{\D}{\mathscr{D}\ms{1mu}}
\newcommand{\Pbb}{\mathrm{Pol}}
\newcommand{\one}{{\bf{1}}}
\newcommand{\seteq}{\mathbin{:=}}
\newcommand{\hd}{{\mathrm{hd}}}      					 
\newcommand{\To}[1][{\hs{0.8ex}}]{\xrightarrow{\ms{7mu}{#1}\ms{7mu}}}
\newcommand{\g}{\ms{1mu}\mathfrak{g}\ms{1mu}}
\newcommand{\n}{\mathfrak{n}}
\newcommand{\Uqm}[1][{\mathfrak{g}}]{{U_q^-(#1)}}
\newcommand{\Tor}{\operatorname{Tor}}
\newcommand{\Hom}{\operatorname{Hom}}
\newcommand{\HOM}{\on{\mathrm{H{\scriptstyle OM}}}}
\newcommand{\END}{\on{\mathrm{E\scriptstyle ND}}\ms{.1mu}}
\newcommand{\End}{\operatorname{End}}
\newcommand{\isoto}[1][]{\mathop{\xrightarrow%
[{\raisebox{.3ex}[0ex][.3ex]{$\scriptstyle{#1}$}}]%
{{\raisebox{-.6ex}[0ex][-.6ex]{$\mspace{2mu}\sim\mspace{2mu}$}}}}}
\newcommand{\tor}{{\on{tor}}}  
\newcommand{\fl}{{\on{flat}}} 
\newcommand{\Mod}{\on{Mod}}
\newcommand{\gmod}{\text{-}\mathrm{gmod}}
\newcommand{\F}{\mathscr{F}}
\newcommand{\conv}[1][]{
\underset{\raisebox{.5ex}{$\scriptstyle{#1}$}}{\mathbin{\scalebox{1.1}{$\mspace{1.5mu}\circ\mspace{1.5mu}$}}}}
\newcommand{\hconv}{\mathbin{\scalebox{.9}{$\nabla$}}}
\newcommand{\sconv}{\mathbin{\scalebox{.9}{$\Delta$}}}
\renewcommand{\Im}{\on{Im}}
\newcommand{\de}{\on{\textfrak{d}}}
\newcommand{\tEnd}{\on{E\textsc{nd}}}
\newcommand{\cmA}{\cartan}  
\newcommand{\wlP}{\mathsf{P}}   
\newcommand{\rlQ}{\mathsf{Q}}   
\newcommand{\weyl}{\mathsf{W}}  
\newcommand{\sg}{\mathfrak{S}}   
\newcommand{\Po}{\wlP}
\nc{\qQ}{Q}
\newcommand{\bQ}{\overline{\qQ}}
\newcommand{\wt}{\mathrm{wt}} 		
\newcommand{\bR}{\mathbf{k}} 		
\nc{\corp}{\bR}
\newcommand{\catC}{ \mathscr{C}}  	
\newcommand{\tcatC}{ \widetilde{\mathscr{C}}}  	
\newcommand{\lRg}[1][w]{ \tcatC_{#1} }  	
\newcommand{\dM}{ \mathsf{M }}              
\newcommand{\dC}{ \mathsf{C }}              
\newcommand{\gW}{\mathsf{W}}
\newcommand{\sgW}{\mathsf{W}^*}
\newcommand{\tf}{{\widetilde{f}}}  		
\newcommand{\te}{{\widetilde{e}}}  		
\newcommand{\tF}{\widetilde{F}}  		
\newcommand{\tE}{\widetilde{E}}  		
\newcommand{\ep}{\varepsilon}  		
\newcommand{\ph}{\varphi}  		
\newcommand{\Ht}{\mathrm{ht}} 		
\newcommand{\Rr}{\mathbf{r}} 			
\newcommand{\coR}{R} 				
\newcommand{\La}{\Lambda} 			
\newcommand{\tLa}{\widetilde{\Lambda}} 			
\nc{\Ma}{{\ms{1.5mu}\mathsf{M}}}
\nc{\Na}{\mathsf{N}}
\nc{\Xa}{\mathsf{X}}
\nc{\Ya}{\mathsf{Y}}
\nc{\Laa}{\mathsf{L}}
\newcommand{\z}[1][{\Ma}]{{z_{#1}}}
\newcommand{\zM}{{z_\Ma}}
\newcommand{\zN}{{z_\Na}}
\newcommand{\id}{\ms{2mu}{\mathsf{id}}\ms{1mu}}   				
\newcommand{\dphi}{{\phi}}   				
\newcommand{\gH}{\mathrm{H}}   				
\newcommand{\lG}{\Gamma}   					
\newcommand{\opp}{\mathrm{opp}}
\nc{\be}{\begin{enumerate}}
\newcommand{\bnum}{\be[{\rm(i)}]}
\newcommand{\bna}{\be[{\rm(a)}]}
\newcommand{\rtl}{\rlQ}
\newcommand{\etens}{\boxtimes}
\newcommand{\rmat}[1]{\ms{1mu}{\mathbf{r}}_%
{\mspace{-2mu}\raisebox{-.6ex}{${\scriptstyle{#1}}$}}}
\newcommand{\shc}{{\ms{2mu}\mathcal{C}}}
\newcommand{\tC}{\widetilde{\catC}}
\newcommand{\Ob}{\on{Ob}}
\nc{\ms}{\mspace}
\nc{\cl}{\colon}
\nc{\ro}{{\rm (}}
\nc{\rf}{{\rm )}\xspace}
\nc{\noi}{\noindent}
\nc{\bl}{\bigl(}
\nc{\br}{\bigr)}
\newenvironment{myequationn}
{\relax\setlength{\arraycolsep}{1pt}\begin{eqnarray*}}
{\end{eqnarray*}}
\newenvironment{myequation}
{\relax\setlength{\arraycolsep}{1pt}\begin{eqnarray}}
{\end{eqnarray}}
\nc{\eq}{\begin{myequation}}
\nc{\eneq}{\end{myequation}}
\nc{\eqn}{\begin{myequationn}}
\nc{\eneqn}{\end{myequationn}}
\newenvironment{myarray}[1]{\relax\setlength{\arraycolsep}{1pt}
\begin{array}{#1}}{\end{array}\relax}
\newcommand{\ba}{\begin{myarray}}
\newcommand{\ea}{\end{myarray}}
\nc{\hs}{\hspace*}
\nc{\vs}{\vspace*}
\nc{\set}[2]{\left\{{#1}\mid{#2}\right\}}
\nc{\snoi}{\smallskip\noi}
\nc{\mnoi}{\medskip\noi}
\nc{\al}{\alpha}
\nc{\rmz}{\setminus\{0\}}
\nc{\vphi}{\varphi}
\nc{\ee}{\end{enumerate}}
\nc{\la}{\lambda}
\nc{\bc}{\begin{cases}}
\nc{\ec}{\end{cases}}
\nc{\qtq}[1][and]{\quad\text{#1}\quad}
\nc{\qt}[1]{\quad\text{#1}}
\nc{\dual}{{\displaystyle{\ms{1mu}\star}}}
\nc{\wle}{\preceq}
\nc{\epito}{\twoheadrightarrow}
\nc{\epiTo}[1][]{\xymatrix@C=4ex{{}\ar@{->>}[r]^-{#1}&{}}}
\nc{\Proof}{\begin{proof}}
\nc{\lan}{\langle}
\nc{\ran}{\rangle}
\nc{\ang}[1]{\lan{#1}\ran}
\nc{\QED}{\end{proof}}
\nc{\soplus}{\scalebox{.65}{\raisebox{.2ex}{$\displaystyle\bigoplus$}}}
\nc{\eps}{\varepsilon}
\nc{\supp}{\on{supp}}
\nc{\sct}{strongly commute\xspace}
\nc{\scts}{strongly commutes\xspace}
\nc{\bce}{\eta}			
\nc{\height}[1]{\on{ht}(\ms{.5mu}{#1}\ms{.5mu})}
\nc{\braid}{{\ms{1mu}\mathrm{br}}}
\nc{\gp}{\mathfrak{p}}
\nc{\wtl}{\wlP}
\nc{\ra}{real and admits an affinization}
\nc{\ras}{real and admit affinizations}
\nc{\Cor}{\begin{coro}}
\nc{\encor}{\end{coro}}
\nc{\shf}{\mathcal{F}}
\nc{\Cw}[1][{w}]{\catC_{{#1}}}
\nc{\tCw}[1][{w}]{\widetilde{\catC}_{{#1}}} 
\nc{\akew}[1][2ex]{\rule[-1ex]{#1}{0ex}}
\nc{\ake}[1][2ex]{\rule[-1ex]{0ex}{#1}}
\nc{\akete}[1][-1ex]{\rule[{#1}]{0ex}{1ex}}
\nc{\tRm}{(R\gmod)\widetilde{\mbox{$\ake[2.5ex]\akew[.9ex]$}}}
\nc{\monoTo}[1][]{\xymatrix{\ar@{>->}[r]^-{{#1}}&}}
\nc{\monoto}[1][]{\rightarrowtail}
\nc{\tX}{\widetilde{X}}
\nc{\corps}{\corp}
\nc{\tL}{\widetilde{L}}
\nc{\prtl}{\rtl_+}
\nc{\nrtl}{\rtl_-}
\nc{\tK}{\widetilde{K}}
\nc{\tep}{\widetilde\ep}
\nc{\teps}{\widetilde\ep}
\nc{\teta}{\widetilde\eta}
\nc{\ga}{\mathfrak{a}}
\nc{\scbul}{{\,\raise1pt\hbox{$\scriptscriptstyle\bullet$}\,}}
\nc{\bwr}{\mbox{\large$\wr$}}
\nc{\tR}{{\widetilde{\mathrm{R}}}}
\nc{\lS}{\mathsf{S}}
\nc{\lZ}{\mathcal{Z}}
\nc{\prolim}[1][]{\mathop{\varprojlim}\limits_{{#1}}}
\nc{\sym}{\sg}
\newcounter{myc}
\newcounter{mycc}
\nc{\txi}{\tilde{\xi}}
\nc{\rl}{\rlQ}
\nc{\sfC}{\mathsf{C}}
\nc{\cor}{{\ms{1mu}\mathbf{k}\ms{1mu}}}
\nc{\Pro}{\on{Pro}}
\newcommand{\proolim}[1][]{\ms{-1mu}\mathop{\text{``}\ms{-.5mu}\varprojlim\ms{-4mu}\text{''}}\limits_{#1}}
\nc{\hM}{\widehat{\mathsf{M}}}
\nc{\aff}{\mathrm{aff}}
\nc{\rDa}{{\mathscr{D}_\aff}}
\nc{\st}[1]{\{{#1}\}}
\nc{\W}{\mathsf{W}}
\nc{\rt}{\Delta}
\nc{\pwtl}{\wtl_+}
\nc{\rev}{{\mathrm{rev}}}
\nc{\E}[1]{{E}_{{#1}}\ms{1mu}} 
\nc{\Es}[1]{{E}^*_{{#1}}\ms{1mu}}
\nc{\Qt}{\mathscr{Q}}
\nc{\Ctr}{\mathsf{C}}
\nc{\Ctrs}{{\mathsf{C}^*}}
\nc{\Dynkin}{\Delta}
\nc{\cartan}{\mathsf{C}}
\nc{\sfc}{\mathsf{c}}
\nc{\sfa}{\mathsf{a}}
\nc{\SW}{\mathrm{K}}
\nc{\hSW}{\widehat{\mathrm{K}}}
\nc{\refl}{\mathscr{S}}
\nc{\Rre}{\mathrm{R}^{\mathrm{ren}}}
\nc{\Rpre}{\mathrm{R}^{\mathrm{ren}\;'}}
\nc{\bRre}{\ol{\mathrm{R}}^{\ms{2mu}\mathrm{ren}}}
\nc{\sfd}{\ms{1mu}\mathsf{d}\ms{1mu}}
\nc{\shm}{\mathcal{M}}
\nc{\sht}{\mathcal{T}}
\nc{\rank}{\mathrm{rank}}
\nc{\Da}{{\D}\ms{-2.8mu}\raisebox{-.35ex}{$\scriptstyle\mathrm{aff}$}}
\nc{\lDa}{\Da^{-1}}
\nc{\bchi}{{\scalebox{.9}{\mbox{$\mathscr{E}$}}}}
\nc{\bchis}{\bchi{}^{\ms{2mu}*}}
\nc{\Daf}{\mathscr{D}}
\nc{\Laf}{\mathscr{L}}
\nc{\tLaf}{\widetilde{\Laf}}
\nc{\wtaf}{\mathscr{W}\ms{-3mu}{\mathit{t}}}
\nc{\res}[1][]{\mathop\star\limits_{\raisebox{.4ex}{$\scriptstyle #1$}}\ms{2mu}}
\nc{\hchi}{\widehat{\chi}}
\nc{\convaff}{\mathop{\scalebox{1.1}{$\mspace{1.5mu}\circ\mspace{1.5mu}$}}\limits}
\nc{\cvb}{CVB\xspace}
\nc{\svelt}{essentailly samll\xspace}
\nc{\Proc}{\on{Pro}_{\mathrm{coh}}}
\nc{\sha}{\mathcal{A}}
\nc{\Ker}{\on{Ker}}
\nc{\Coker}{\on{Coker}}
\nc{\Aff}[1][z]{\on{Aff}_{\ms{1mu}#1}}
\nc{\scb}{\scalebox}
\nc{\afr}{affreal\xspace}
\nc{\epifrom}{\ms{-5mu}\xymatrix@C=3ex{{}&{}\ar@{->>}[l]}\ms{-5mu}}
\nc{\Mid}{\bigm|}
\nc{\ol}{\overline}
\nc{\bpsi}{\ol{\psi}}
\nc{\Rat}[1][z]{\on{Raff}_{\ms{1mu}#1}}
\nc{\indlim}{\varinjlim\limits}
\nc{\inddlim}{\mathop{\mbox{``{$\varinjlim$}''}}\limits}
\nc{\Rmat}{\mathrm{R}\ms{1mu}}
\nc{\Runi}{\mathrm{R}^{\mathrm{uni}}}
\nc{\Modg}{\mathrm{Modg}}
\nc{\KO}{quasi-rigid\xspace}
\nc{\hF}{\widehat{\F}}
\nc{\Modc}{\Mod_{\mathrm{coh}}}
\nc{\e}{\mathrm{e}}
\nc{\Idx}{\mathsf{\Lambda}}
\nc{\hA}{\widehat{A}}
\nc{\prood}{\mathop{\text{``}\prod\text{''}}\limits}
\nc{\hrefl}{\widehat{\mathscr{S}}}
\nc{\ev}{\mathrm{ev}}
\nc{\coev}{\mathrm{coev}}
\nc{\ihom}{\mathcal{H}om}
\nc{\tY}{\widetilde{Y}}
\nc{\tensz}{\tens[z]\ms{-3.5mu}}
\nc{\tRre}{\widetilde{\mathrm{R}}^{\mathrm{ren}}}
\nc{\dg}{\mathbf{\lambda}}
\nc{\htens}{\hconv}
\nc{\stens}{\sconv}
\nc{\Modgc}{\mathrm{Modg}_{\mathrm{coh}}}
\nc{\Aut}{\mathrm{Aut}}
\nc{\Rd}[1][\dg]{R_{#1}\gmod}
\nc{\nn}{\nonumber}
\nc{\Dual}{\mathrm{D}\ms{1mu}}
\nc{\DA}[1][A]{\ms{1mu}\mathrm{D}_{{#1}}}
\nc{\DmA}[1][A]{\ms{1mu}\Dual^{-1}_{{#1}}}
\nc{\tensa}{\tens[A]\ms{-3mu}}
\nc{\tensc}{\tens[{\ms{3mu}\cor}]\ms{-3mu}}
\nc{\bg}{{\ms{2mu}\mathrm{big}}}
\nc{\afn}{affine object\xspace}
\nc{\afns}{affine objects\xspace}
\nc{\subafn}{affine subobject\xspace}
\nc{\Afns}{Affine objects\xspace}
\nc{\dL}{\mathsf{L}}
\renewcommand{\preceq}{\preccurlyeq}
\nc{\ltens}{\otimes\limits^{\mathbb{L}}}
\nc{\epl}{\epsilon}
\numberwithin{equation}{section}
\title {Affinizations, R-matrices and reflection functors}
\author[M. Kashiwara]{Masaki Kashiwara}
\thanks{The research of M.\ Kashiwara
was supported by Grant-in-Aid for Scientific Research (B) 20H01795,
Japan Society for the Promotion of Science.}
\address[M. Kashiwara]{
Kyoto University Institute for Advanced Study, Research Institute
for Mathematical Sciences, Kyoto University, Kyoto 606-8502, Japan
\& Korea Institute for Advanced Study, Seoul 02455, Korea}
\email[M. Kashiwara]{masaki@kurims.kyoto-u.ac.jp}
\author[M. Kim]{Myungho Kim}
\address[M. Kim]{Department of Mathematics, Kyung Hee University, Seoul 02447, Korea}
\email[M. Kim]{mkim@khu.ac.kr}
\thanks{The research of M.\ Kim was supported by the National Research Foundation of
Korea (NRF) Grant funded by the Korea government(MSIP)
(NRF-2022R1F1A1076214 and NRF-2020R1A5A1016126).}
\author[S.-j. Oh]{Se-jin Oh}
\thanks{ The research of S.-j.\ Oh was supported by the Ministry of Education of the Republic of Korea and the National Research Foundation of Korea (NRF-2022R1A2C1004045).}
\address[S.-j. Oh]{Department of Mathematics,  Sungkyunkwan University,  Suwon 16419 , Korea}
\email[S.-j. Oh]{sejin092@gmail.com}
\author[E. Park]{Euiyong Park}
\thanks{The research of E.\ Park was supported by the  National Research Foundation of Korea(NRF) grant funded by the Korea government (MSIT) (RS-2023-00273425 and NRF-2020R1A5A1016126)}
\address[E. Park]{Department of Mathematics, University of Seoul, Seoul 02504, Korea}
\email[E. Park]{epark@uos.ac.kr}
\keywords{Affinization, Localization, Reflection, R-matrices, Quiver Hecke algebra}
\subjclass[2020]{18M05, 16D90,  81R10}
\date{February 2, 2024}
\begin{document}

\maketitle

\begin{abstract}
In this paper we establish affinizations and R-matrices in the language of 
pro-objects, and as an application, we construct 
reflection functors over the localizations of quiver Hecke algebras of arbitrary finite types. This reflection functor categorifies
the braid group action on the half of a quantum group 
and the Saito reflection.
\end{abstract}

\tableofcontents

\section{Introduction}

\emph{Affinizations} and \emph{{ R}-matrices} are one of the most powerful tools in the representation theory of quiver Hecke algebras
and affine quantum groups. 
One of the most successful applications of them is the proof of the simplicity of the head of
the tensor product of
simple objects  given in \cite{KKKO15} (cf.\  Proposition~\ref{prop:simplehd}).

R-matrices are distinguished homomorphisms between tensor products of modules, 
which measure the commutativity of tensor products.  
Affinizations of modules  help R-matrices to play their role. 
The study on affinizations and R-matrices 
gives rise to the integer invariants $\La$, $\tLa$ and $\de$, 
which have been used crucially in deriving several remarkable results 
including  monoidal categorification of cluster algebras 
(see \cite{KKKO15, KKKO18, KKOP18, KP18} and references therein). 
In the representation theory of quantum affine algebras,
 R-matrices already occupy an important position.
The generalized Schur-Weyl duality functor relates the
R-matrices in quiver Hecke algebras and the ones in quantum affine algebras in a natural way, 
and they enjoy very similar properties in their own categories. 
See also \cite{CP94, EM03, Kas02, K^3,  KKOP20} and references therein.

In the case of \emph{symmetric} quiver Hecke algebras, there exists 
a functorial construction of an affinization of a module (\cite{K^3}). Let $R$ be a symmetric quiver Hecke algebra over a base field $\bR$ and $M$ an $R$-module. As a $\bR$-vector space, the affinization $\Ma$ of $M$ is isomorphic to $\bR[z_\Ma]\otimes  M$, and the action of the generators $\e(\nu)$ and $\tau_l$ on $ \Ma$ is the same as those on $M$, but the action of $x_k$ is twisted by $z_\Ma$ (see \cite[Section 1.3]{K^3}). The indeterminate $z_\Ma$ can be understood as a monomorphism in $\End(\Ma)$. Once an affinization exists, one can construct the distinguished homomorphisms 
\begin{align*}
 \Rre_{\Ma,\Na} & := (z_\Ma - z_\Na )^{-s}\  \Runi_{\Ma,\Na} \in \Hom ( \Ma \conv \Na,   \Na \conv \Ma ), \\
\Rr_{M,N} & :=  \Rre_{\Ma,\Na}|_{z_\Ma=z_\Na=0} \in  \Hom ( M \conv N,  N \conv M ),
\end{align*}
where $\Runi_{\Ma,\Na}$ is the \emph{universal} R-matrix defined by the intertwiners $\varphi_i$ (see Section \ref{subSec: UR}) and $s$ is the largest integer such that 
$\Runi_{\Ma,\Na} (\Ma \conv \Na) \subset(z_\Ma - z_\Na )^s  \Na \conv \Ma $.  
The \emph{renormalized} R-matrix $ \Rre_{\Ma,\Na}$ commutes with $z_\Ma$ and $z_\Na$,and the R-matrix $\Rr_{M,N}$ never vanishes.

Unlike the case of symmetric quiver Hecke algebras,  the existence of affinizations over a \emph{non-symmetric} quiver Hecke algebra is not guaranteed,
which causes a big difficulty in the study of the representations
over non-symmetric quiver Hecke algebras 
such as \emph{monoidal categorification in the non-symmetric case}.  
 In \cite{KP18}, the notion of affinizations and R-matrices 
for \emph{arbitrary} quiver Hecke algebras 
was studied.
It was shown that, if an affinization exists, the R-matrices can be constructed, and they enjoy very similar properties to those in the symmetric case. 
Affinizations and renormalized R-matrices allow us to consider a \emph{duality datum} (see \cite[Section 4.1]{KP18} for the definition) over the quiver Hecke algebra setting, which provides the generalized Schur-Weyl duality between quiver Hecke algebras (\cite[Section 4]{KP18}). This duality can be understood as an analogue of the generalized Schur-Weyl duality from quiver Hecke algebras to quantum affine algebras introduced in \cite[Section 3]{K^3}.

 In the case of the \emph{localization} of module categories over
quiver Hecke algebras, the theory of affinization becomes more complicated. 
 Let $R\gmod$ be the category of finite-dimensional graded modules over an \emph{arbitrary} quiver Hecke algebra, and let $\weyl$ be the Weyl group.  For any element $w \in  \weyl$, let $\catC_w$ be the full subcategory of $R\gmod$ 
generated by the dual PBW vectors corresponding to $w$ (see \cite{KKKO18, KKOP18}). Note that the subcategory $\catC_w$ categorifies the quantum unipotent coordinate ring $A_q(\n(w))$ and $\catC_w$ gives a monoidal categorification of $A_q(\n(w))$ as a quantum cluster algebra (\cite{KKKO18}). It was shown in \cite{KKOP21} that  $\catC_w$ admits a localization $\tcatC_w$ via a \emph{real commuting family} of central objects. 
In the language of cluster algebras, the central objects correspond to the frozen variables and the localization $\tcatC_w$ categorifies the localization of the quantum cluster algebra $A_q(\n(w))$  at the frozen variables. It turns out that the localization $\tcatC_w$ is a rigid monoidal category (\cite{KKOP21,KKOP22}).

The categorical structure of the localization $\tcatC_w$ is much more complicated than the original category $\catC_w$. 
Let $\Modc(R)$ be the category of finitely generated $R$-modules, and let
$\catC^\bg_w$ be the full subcategory of $\Modc(R)$ consisting of modules whose simple subquotients belong to $\catC_w$. 
Then, we can localize $\catC^\bg_w$
similarly to $\catC_w$ and obtain the localization $\tcatC^\bg_w$.
The problem is that  the dual of an affinization of a simple module in
$\catC_w$
does not belong to $\tcatC^\bg_w$ in general.
Thus it is not valid to translate directly the classical notion of  affinizations and R-matrices  to the localization $\tcatC_w$. 
In order to overcome this difficulty, we have to introduce the category 
$\Pro(\tcatC_w)$ of pro-objects of $\tcatC_w$. Then the dual of
an affinization belongs to $\Pro(\tcatC_w)$.
This categorical approach to affinizations and R-matrices may lead us to new applications in various interesting categories including a new kind of generalized Schur-Weyl duality.

\smallskip

In this paper, we establish the theory of 
affinizations and R-matrices 
in the language of pro-objects, and as  an application, we construct 
\emph{reflection functors} over the localizations of quiver Hecke algebras 
of an \emph{arbitrary finite type}.
The main results of the paper can be summarized as follows:
\bnum
\item The first main result is to study the notion of affinization and R-matrices in the language of pro-objects. We define affinizations and R-matrices in
the categories with certain conditions, and investigate their properties. We then introduce the invariants $\Daf$, $\La$ and $\de$ and prove various properties of them.
Thus, in a purely categorical setting,  
we have recovered  key properties appeared in quiver Hecke algebras and quantum affine algebras. We next introduce a \emph{duality datum} at the category level, and construct a generalized Schur-Weyl duality in the category setting by applying the argument given in \cite{K^3}.

\item 
The second main result is to apply the generalized Schur-Weyl duality 
to the localized category $\tcatC_{s_i w_0}^*$ of a quiver Hecke algebra of \emph{arbitrary finite type}.  As a result, for any $i\in I$, we obtain 
an equivalence of monoidal categories 
$$
\refl_i\cl \tcatC_{s_iw_0} \isoto \tcatC^*_{s_iw_0},
$$
which categorifies
the braid group action (\cite[Chapter 37]{Lu93}) and 
the \emph{Saito reflection} $\sigma_i$ (\cite{Saito}).

In the construction, the \emph{affinizations} (or the \emph{lifts}) of the invariants $\La$, $\de$, $ \tLa$, etc.\ take a crucial role (see Section \ref{Section: RIA}). 

We conjecture that $\refl_i\cl \tcatC_{s_iw_0} \buildrel \sim \over \longrightarrow \tcatC^*_{s_iw_0}$ induces an equivalence of categories $\catC_{s_iw_0}  \buildrel \sim \over \longrightarrow \catC^*_{s_iw_0}$ at the level of module categories  (see Conjecture \ref{Conj: refl}).

In the case of finite ADE type, S.\ Kato (\cite{Kato14}) constructed the Saito reflection functor using geometry, which categorifies the braid group action on the half of a quantum group (see also \cite{Kato20, McNamara17}). 
We conjecture that $\refl_i$ coincides with the functor induced
by the functor constructed by S.\ Kato. 
However, in the case of \emph{non-symmetric type}, there is no relevant result as far as the authors know. 
\ee

\smallskip

Let us explain the main results of the paper in more details.

The first main result is about the notion of affinization and R-matrices in a general category. 
We shall explain it in three parts, i.e., affinization in categories,  affinizations with respect to $\cor[z]$ and R-matrices, 
and generalized Schur-Weyl duality.

\mnoi
{\bf Affinization in categories.}\ 
Let $\shc$ be a $\cor$-linear category satisfying \eqref{cond:fcat} and let 
$A\seteq\soplus_{k\in \Z} A_k$ be a commutative graded $\cor$-algebra satisfying \eqref{cond:gring}. 
We first define the subcategory $\Proc(A,\shc)$ consisting of graded \emph{coherent} $A$-modules in the category $\Proc(\shc)$ of all pro-objects of $\shc$ (see Definition~\ref{Def: Pro_coh} for the precise definition). We investigate the category $\Proc(A,\shc)$ and obtain several homological properties under the condition when objects are $A$-flat (see Section \ref{Sec: aff in abelian}). 
We denote by $\Aff[A](\shc)$ the full subcategory of $\Proc(A,\shc)$ consisting of $A$-flat objects (Definition \ref{def:aff}). 
For a given anti-equivalence $\Dual$ of categories, we show that $\Dual$ can be lifted to the category $\Proc(A,\shc)$, denoted by $\DA$.
Then, the functor $\DA$ gives an equivalence of categories
between $\Aff[A](\shc)$ and its opposite category (see Section \ref{subSec: duality}).
In natural and interesting situations, $\Dual$ can be understood as a duality functor in the category $\shc$.

Affinizations in monoidal categories are also studied in details. We assume further that $\shc$ is an abelian $\cor$-linear graded monoidal category satisfying \eqref{cond:exactmono}.  
For graded $A$-modules $\Ma$ and $\Na$ in $ \Pro(\shc)$, we define a tensor product $\Ma\tensa\Na\in\Modg(A, \Pro(\shc))$ by the universal property \eqref{Eq: ten_A}. Lemma \ref{lem:Affmon} says that the category $\Aff[A](\shc)$ with $\tensa$ forms a monoidal category, and  Proposition \ref{Prop:rigid} tells us that $\Aff[A](\shc)$ is rigid if $\shc$ is a rigid monoidal category.

\mnoi
{\bf Affinizations with respect to $\cor[z]$ and R-matrices.}\ 
In the case where $A = \cor[z]$, we take one more step further. This case exactly appears in the quiver Hecke algebras.
We assume that $\shc$ is a $\cor$-linear category satisfying \eqref{cond:fcat} and $A = \cor[z]$, where $z$ is an indeterminate. We define an \emph{\afn} in $\shc$ as a pair $(\Ma, z)$ belonging to $\Proc(\cor[z],\shc)$ with the conditions (a)--(d) in Definition \ref{Def: affine objects}. 
We then study properties of \afns.
Let $\Aff(\shc)$ denote the category of \afns, and $\Rat(\shc)$ the category of $\Proc(\cor[z],\shc)$ modified by making the morphism $z$ invertible (see Definition \ref{Def: Raff} for the precise definition). We show that there is a canonical functor 
$$
\Aff(\shc) \longrightarrow \Rat(\shc),
$$
which is faithful and essentially surjective, and prove several properties (see Section \ref{suSec: affine obj}). 

We further assume that $\shc$ is an abelian $\cor$-linear graded monoidal category satisfying \eqref{cond:exactmono}.  
For $(\Ma, z_\Ma)$ and $(\Na, z_\Na)$ in $\Aff(\shc)$, 
let $M = \Ma/ z_\Ma \Ma$ and $N = \Na/ z_\Na \Na$.
Under certain assumptions (see Proposition \ref{pro:onedimhom}), we prove that   
$$
\HOM_{\cor[\z,\zN]}(\Ma\tens\Na,\Ma\tens\Na)\simeq\cor[\z,\zN]\id_{\Ma\tens \Na},
$$
and there exists $\Rre_{\Ma,\Na}\in\HOM_{\cor[\z,\zN]}(\Ma\tens\Na,\Na\tens \Ma)$
such that
$$
\HOM_{\cor[\z,\zN]}(\Ma\tens\Na,\Na\tens \Ma)
\simeq\cor[\z,\zN]\Rre_{\Ma,\Na}
$$
and $\Rre_{\Ma,\Na}\vert_{\z=\zN=0}\in\HOM_\shc(M\tens N, N\tens M)$ does not vanish. 
The morphism $\Rre_{\Ma,\Na}$ is called the \emph{renormalized R-matrix}, and 
the morphism $\rmat{M,N} := \Rre_{\Ma,\Na}\vert_{\z=\zN=0}$ is called the \emph{R-matrix} between $M$ and $N$ (see Definition \ref{def: rmat}). The above isomorphisms are typical properties which affinizations and R-matrices should have. We then introduce a \emph{rational center} in $\shc$ using the category $\Rat(\shc)$.
Using the notions of \afns and rational center, 
we finally define an \emph{affinization} $\Ma$ of $ M\in \shc$ (see Definition \ref{def:affinization}). 
Proposition \ref{pro:canoaff} explains that this definition 
is a generalization of the definition for affinizations (see \eqref{eq:oldaff}) given in \cite[Definition 2.2]{KP18}. 
In this categorical setting, 
the condition being a rational center makes R-matrices  satisfy the Yang-Baxter equation (Lemma \ref{lem:YB}).  
We prove several properties of affinizations and $\Rre_{\Ma,\Na}$ and define the invariants in the language of categories: 
\begin{equation} \label{Eq: D, La, de}
\begin{aligned} 
	&\Daf(\Ma,\Na) \in \cor[\z,z_{\Na}]  \qt{such that} \quad  \Rre_{\Na,\Ma} \circ \Rre_{\Ma,\Na}  = \Daf(\Ma,\Na) \id_{\Ma\tens \Na},  \\
	&\La(M,N)\seteq\deg(\rmat{M,N})\in\Z,\\
	&\de(M,N) \seteq \dfrac{1}{2}\bl\La(M,N)+\La(N,M)\br=\deg(\Daf(\Ma,\Na))/2 \in \dfrac{1}{2}\Z_{\ge 0}. 
\end{aligned}
\end{equation}

We next introduce the notion of \emph{\KO} categories. Note that the module categories of quiver Hecke algebras and quantum affine algebras are examples of \KO categories (see \cite{KKKO15}). 
The quasi-rigidity plays an important role in proving the simplicity of the head of a tensor product of simples.  As a consequence of the quasi-rigidity,  
the space of morphisms between various tensor products are one-dimensional (see Proposition~\ref{prop:simplehd} and Corollary~\ref{cor:dimEND}) which guarantees that the renormalized R-matrices  exist.  
It is conjectured that a \KO monoidal category which satisfies \eqref{cond:exactmono}
is embedded into a rigid monoidal category (Conjecture \ref{Conj: KO}).
Under the additional assumption that $\shc$ is \KO, we prove various properties of R-matrices and the invariants $\La$ and $\de$. Thus, in the purely categorical language,  we have recovered the key properties appeared in quiver Hecke algebras and quantum affine algebras.

\mnoi{\bf Generalized Schur-Weyl duality.}
\ 
Let $\shc$ be a graded \KO monoidal category satisfying \eqref{cond:g}, and let $R$ be the quiver Hecke algebra associated with parameter polynomials $Q_{i,j}(u,v)$.
We define a \emph{duality datum} $\bl\st{(\hSW_i,z_i)}_{i\in I},\st{\Rre_{\hSW_i,\hSW_j}}_{i,j\in I}\br$ consisting of 
a family $\st{(\hSW_i,z_i)}_{i\in I}$ of affinizations in $\shc$ satisfying \eqref{eq:SW} and the renormalized R-matrices $\Rre_{\hSW_i,\hSW_j}\cl\hSW_i\tens \hSW_j\to \hSW_j\tens \hSW_i$ satisfying \eqref{eq:normR}.
Applying the argument given in \cite{K^3, KP18} to the duality datum $\bl\st{(\hSW_i,z_i)}_{i\in I},\st{\Rre_{\hSW_i,\hSW_j}}_{i,j\in I}\br$, we obtain a canonical right exact monoidal functor
$$\hF\cl\Modgc(R_\dg)\to\Pro(\shc)$$
such that
\begin{align*}
	&\hF(\tL(i)_{z_i}) \simeq\hSW_i\qt{and}\quad  \hF(L(i))\simeq \SW_i,  \\
	&\hF(e(i,j)\vphi_1)=\Rre_{\hSW_i,\hSW_j}\in \HOM_{\cor[z_i,z_j]}(\hSW_i\tens \hSW_j,\hSW_j\tens \hSW_i), 
\end{align*}
where $\varphi_1$ is the intertwiner given in \eqref{def:intertwiner}, and $L(i)$ be the 1-dimensional simple $R(\al_i)$-module of degree $0$. Then we have the restricted functor
$$
\F\cl R\gmod \to\ \shc,
$$
and prove that this functor enjoys the same good properties as the usual generalized Schur-Weyl duality (see Section \ref{subSection: SW}).

\smallskip

The second main result is the reflection functor  $\refl_i\cl \tcatC_{s_iw_0} \buildrel \sim \over \longrightarrow \tcatC^*_{s_iw_0}$ over the localizations, which categorifies the braid group action on the quantum group. We shall explain it in two parts, i.e., affinizations of the invariants and reflection functors.

\mnoi
{\bf Affinizations of the invariants.}\ 
We lift the integer-valued invariants 
$\La$, $\tLa$, $\de$, $\wt$, $\ep_i$ etc.\ 
which are used crucially in quiver Hecke algebras, 
to functions by using affinizations. 

Let $(\Ma, z_\Ma)$ and $(\Na, z_\Na)$ be affinizations of simple $R$-modules $M$ and $N$ respectively. We assume that $M$ and $N$ are real for convenience. 
Let us recall $\chi_i(\Ma)$, $\bchi_i(\Ma)$ and $\bchis_i(\Ma)$ defined in \cite[Section 3.1] {KKOP21} (see \eqref{Eq: lift of chiE} for the precise definitions). By properties (see Lemma \ref{lem:endpreaff} for example) of affinizations, $\chi_i(\Ma)$, $\bchi_i(\Ma)$ and $\bchis_i(\Ma)$ can be viewed as elements in $\cor[t_i,\z]$. In Definition \ref{def:inv}, we define $\Daf(\Ma,\Na) $ (see \eqref{Eq: D, La, de}) and $\tLaf(\Ma,\Na)$ in $  \cor[\z,\zN]$ using the renormalized R-matrix $\Rre_{\Ma,\Na}$. We then define  $\Laf(\Ma,\Na)$ and $\wtaf(\Ma,\Na)$ in $\cor(x_\Ma, z_\Na)/ \cor^\times$ in terms of $\chi_i(\Ma)$, $Q_{i,j}(t_i, t_j)$, $\tLaf(\Ma,\Na)$. 
It turns out that these new invariants can be understood as lifts of the well-known integer invariants  in the following sense:
\eqn
\deg\bl\chi_i(\Ma)\br&=&n_i(\al_i,\al_i)\qt{where 
	$\wt(M)=-\smash{\sum\nolimits_{i\in I}}n_i\al_i$,}\\
\deg\bl\bchi_i(\Ma)\br&=&\eps_i(M)(\al_i,\al_i)=2\tLa(L(i),M),\\
\deg\bl\bchis_i(\Ma)\br&=&\eps^*_i(M)(\al_i,\al_i)=2\tLa(M,L(i)),\\
\deg\bl\Daf(\Ma,\Na)\br&=&2\de(M,N),\\
\deg\bl\tLaf(\Ma,\Na)\br&=&2\tLa(M,N),\\
\deg\bl\Laf(\Ma,\Na)\br&=&2\La(M,N),\\
\deg\bl\wtaf(\Ma,\Na)\br&=&2\bl\wt(M),\wt(N)\br,
\eneqn
where $\deg$ denotes the homogeneous degree. We investigate relations between the lifted invariants and prove several  interesting identities which are also interpreted as lifts of the identities between the integer invariants (see Section \ref{subsec:Inv}). The identities between lifted invariants are described in terms of the \emph{resultant algebra} (see Section \ref{sec:resultant}). These identities are used crucially in the construction of the reflection functors.

\mnoi
{\bf Reflection functors.}\ 
We first investigate the canonical functor  
$$
\Phi_w \cl \catC_w \to \tcatC_w  \qquad (w\in \weyl)
$$
and prove that the functor $\Phi_w$ is fully faithful (Theorem \ref{th:ff}). 
We thus show that the full subcategory $\catC_w$ of $\tcatC_w$ is stable by taking subquotients. However  $\catC_w$ is not stable by taking extensions in $\tcatC_w$ in general (see Remark \ref{rem:ext}). 

Let $R$ be a quiver Hecke algebra of \emph{arbitrary finite} type, and consider the categories $ \catC_{s_iw_0}$ and $\catC^*_{s_iw_0}$
(see \eqref{def:Cw}). Fix an index $i \in I$. For any $j\in I$, we take the real simple objects in $\tcatC^*_{s_iw_0}$ as follows: 
\eqn
\SW_j&\seteq
\bc
\D\bl\Qt_{s_iw_0}^*(\ang{i})\br &\text{if $j=i$,}\\
 \ang{i^{-\sfc_{i,j}}}\hconv \ang{j}&\text{if $j\not=i$,}
\ec
\eneqn
where $\D$ is the right dual functor in $\tcatC^*_{s_iw_0}$, $\langle i^m \rangle $ is the self-dual simple $R(m\al_i)$-module, and $\Qt_{s_iw_0}^*\cl R\gmod \to \tcatC^*_{s_iw_0}$ is the  localization functor (see Section \ref{subSec: Loc}).
We then take the canonical affinizations $\hSW_j$ of $\SW_j$ (see \eqref{Eq: aff hK}), which yield a duality datum 
$$
\bl\st{(\hSW_j,z_j)}_{j\in I},\st{\Rre_{\hSW_j,\hSW_k}}_{j,k\in I}\br.
$$
In particular,  we take $\hSW_i\seteq\Da\bl\Qt_{s_iw_0}^*(R(\al_i))\br$
as the affinization of $\SW_i$ by using the duality functor $\Da$ developed in 
\S\;\ref{Sec: duality}. 
We then compute the invariants $\La(\SW_j, \SW_k) $ and $\de(\SW_j, \SW_k)$ (Proposition \ref{prop:LaKK}),  and the lifted invariants $\Daf(\hSW_j, \hSW_k)$ (Proposition \ref{Prop: De}). Applying the generalized Schur-Weyl duality to this setting, we obtain the duality functor
$ \F_i\cl	R\gmod \longrightarrow \tcatC^*_{s_iw_0}$. Investigating the image of determinantial modules under the functor $\F_i$, we finally construct a functor $\refl_i:  \tcatC_{s_iw_0} \longrightarrow \tcatC^*_{s_iw_0}$ such that the following diagram quasi-commutes (Proposition \ref{Prop: factor th}):
$$\xymatrix@C=8ex{
	R\gmod\ar[d]_{\Qt_{s_iw_0}}\ar[dr]^{\F_i}\\
	\tcatC_{s_iw_0}\ar[r]_{\refl_i}& \tcatC^*_{s_iw_0}.
}$$
Lemma \ref{lem:FiM} and Proposition \ref{Prop: cat of saito refl} say that the functor $\refl_i$ categorifies the braid group action and the Saito reflection.
Theorem \ref{Thm: relf equi} says that the reflection functor $\refl_i$ gives an equivalence of categories.  By technical reasons,  we separately deal with the case of type $A_2$ in Remark \ref{rem:A2}.

\smallskip 
This paper is organized as follows. 
In Section \ref{Sec: Preliminary}, we recall necessary mathematical backgrounds for categories.  
In Section \ref{Sec: aff in abelian} and Section \ref{Sec: duality}, we study affinizations in abelian categories under various conditions.   
In Section \ref{Sec: coh in mon}, we investigate affinizations in monoidal categories under various conditions, and Section \ref{Sec: R-matrices} explains R-matrices in categories.  
In Section \ref{Sec:QHSW}, we study a generalized Schur-Weyl duality, and  
in Section \ref{Sec: localizations} we prove that the canonical functor $\Phi_w \cl \catC_w \to \tcatC_w $ is fully faithful. 
In Section \ref{Section: RIA}, we develop the lifted invariants using affinizations and Section \ref{Sec: reflection} is devoted to reflection functors over localizations.

\mnoi
{\bf Acknowledgments.}\ 
The authors thank Haruto Murata for valuable questions.
The second, third and fourth authors gratefully acknowledge for the hospitality of RIMS (Kyoto University) during their visit in 2023.
\vskip 2em

\begin{convention}\bnum
\item For a statement $P$, $\delta(P)$ is $1$ or $0$ whether $P$ is true or not.
\item Unless otherwise stated, a module over a ring is a left module.
\item $\cor$ denotes a base field.
\item For a ring $A$, we denote by $\Mod(A)$ the category of $A$-modules.
We denote by $\Modc(A)$ the category of coherent $A$-modules.
\item
For a field $\cor$ and a graded $\cor$-algebra $A$, 
we denote by $A\gmod$ 
the category of 
finite $\cor$-dimensional graded $A$-modules.
\item For a graded module $X=\soplus_{m\in\Z}X_m$, we write
$X_{\ge m}=\soplus_{k \ge m}X_k$, $X_{\le m}=\soplus_{k\le m}X_k$ and $X_{>m}=\soplus_{k>m}X_k$.
\item For a commutative $\cor$-algebra $A$ and $f(z)=\sum_{0\le k\le m}a_kz^k\in A[z]$,
we say that $f(z)$ is a {\em monic} polynomial of degree $m$ if $a_m=1$,
and that $f(z)$ is a {\em quasi-monic} polynomial of degree $m$ if $a_m\in\cor^\times$.
\item For a category $\shc$, $\shc^\opp$ denotes the opposite category of $\shc$.
\item We say that a category $I$ is {\em filtrant} if $I$ satisfies the conditions
\bna
\item $\Ob(I)\not=\emptyset$,
\item for any objects $i,j\in I$, there exist $k\in I$ and morphisms $i\to k$ and $j\to k$,
\item for any morphisms $f\cl i\to j$ and $g\cl i\to j$, there exists a morphism $h\cl j\to k$ such that $h\circ f=h\circ g$.
\ee
We say that $I$ is {\em cofiltrant} if $I^\opp$ is filtrant. 
\ee
\end{convention}

\vskip 1em 

\section{Preliminaries} \label{Sec: Preliminary}

\subsection{Module objects}
For a ring $R$ and an additive category $\shc$, let us denote by $\Mod(R,\shc)$ the category of \emph{$R$-modules} in $\shc$, 
i.e., the category of objects $X\in \shc$ endowed with a ring homomorphism $R \to \End_\shc(X)$.  
The morphisms between an $R$-module $X$ and an $R$-module $Y$ are the morphisms in $\shc$ which commute with the $R$-actions.

Assume further that $\shc$ is an abelian category. 
We can define the tensor functor
\eqn
\bullet \tens_R \bullet \cl \Modc(R^\opp) \times \Mod(R,\shc) \to \shc,
\eneqn
which is characterized by the universal property
\eqn 
\Hom_{\shc}(M \tens_R X, Y) \simeq \Hom_{\Mod( R^{\opp} )}\bl M, \Hom_\shc(X,Y)\br
\eneqn
functorially in $X\in\Mod(R,\shc)$,
$M\in \Modc(R^\opp)$ and $Y \in \shc$ (see \cite[Remark 8.5.7]{KS06}).
The functor $\scbul \tens_R \scbul$ is right exact in each variable.

Similarly, we have a functor
$$\ihom_R(\scbul,\scbul)\cl \Modc(R)^\opp \times \Mod(R,\shc) \to \shc$$
characterized by the universal property
\eqn 
\Hom_{\shc}\bl X,\ihom_R(M, Y)\br \simeq 
\Hom_{\Mod( R )}\bl M, \Hom_\shc(X,Y)\br
\eneqn 
for $M \in \Modc(R)$, $Y\in \Mod(R,\shc) $, $X \in \shc$,
and $\ihom_R(\scbul, \scbul)$ is left exact in each variable.

Assume that $R^\opp$ is noetherian.
We say that an $R$-module $X$ in $\shc$ has \emph{$R$-flat dimension $\le m$} if 
$\Tor^R_i(M,X)=0$ for any $i> m$ and $M\in \Modc(R^\opp)$, 
where $\Tor^R_i(\scbul, X)$ denotes the left derived functor of $\scbul \tens[R]X\cl\Modc(R^\opp)\to\shc$. 
An $R$-module $X$ in $\shc$ is called \emph{$R$-flat} if its $R$-flat dimension is $\le 0$, i.e.\ if $\scbul\tens[R]X\cl \Modc(R^\opp)\to\shc$ is an exact functor.

\subsection{Graded categories}\label{subsec:graded}
Let $\shc$ be an additive category.
It is called a graded category if $\shc$ is endowed with
an auto-equivalence $q$
(called the \emph{grading shift functor}).
For an additive graded category $\shc$,
we set
$$\HOM_\shc(M,N)\seteq\soplus_{k\in \Z}\HOM_\shc(M,N)_k\qt{
with }\HOM_\shc(M,N)_k\seteq \Hom_\shc(q^k M,N)$$
for  $M,N\in \shc$.
Then $\shc$ has a category structure with $\HOM_\shc$ as
a set of morphisms.
We say that $f\in\HOM_\shc(M,N)_k$ is a morphism of degree $k$.

For a graded ring $R=\soplus_{k\in\Z}R_k$, 
a graded $R$-module in $\shc$ is an object $X$ endowed with a graded
ring homomorphism
$R\to\END_\shc(X)\seteq\HOM_\shc(X,X)$.
The category of graded $ R$-modules in $\shc$ is an additive graded category
and  denoted by $\Modg(R,\shc)$.
The morphisms between two graded-$R $-modules in $\shc$ are the morphisms in 
$\shc$ which commute with the $R$-actions.

If $\shc$ is abelian, we can define
\eqn\scbul\tens[R]\scbul&&\cl\Modgc(R^\opp)\times \Modg(R,\shc)\To \shc
\qtq\\\ihom_R(\scbul,\scbul)&&\cl\Modgc(R)^\opp\times \Modg(R,\shc)\To\shc.
\eneqn
Here $\Modgc(R)$ is the category of coherent graded $R$-modules.

\smallskip
In the sequel, we sometimes {\em neglect grading shifts}.
For example, we sometimes write that $f\cl M\to N$ is a morphism when $f\in\HOM(M,N)_k$.

\subsection{Pro-objects}
For a category $\shc$, let $\Pro(\shc)$ be the category of pro-objects of $\shc$ (see \cite[Section 6.1]{KS06} for the definition and its properties).
Then there is a fully faithful functor $\shc \to \Pro(\shc)$ and we regard $\shc$ as a full subcategory of $\Pro(\shc)$.
The category $\Pro(\shc)$ admits small cofiltrant  projective limits,  which we denote by
$\proolim$ (\cite[Theorem 6.1.8]{KS06}).
Every object in $\Pro(\shc)$ is isomorphic to 
the projective limit $\proolim[i\in I] X_i$ of a small cofiltrant projective system $\{X_i\}_{i\in I}$ in $\shc$. 
Let $\cor$ be a field. 
Assume that $\shc$ is an abelian (respectively, $\cor$-linear abelian) category.
Then the  category $\Pro(\shc)$  is also an  abelian (respectively, $\cor$-linear abelian) category,  and the canonical functor $\shc \to \Pro(\shc)$ is exact.
Moreover the functor $\proolim[i\in I] $  is exact
(\cite[Theorem 6.1.19]{KS06}).
Namely, for any exact sequence of  cofiltrant projective systems
$X_i\to Y_i\to Z_i$ in $\Pro(\shc)$ indexed by a small cofiltrant category $I$,
$\proolim[i\in I]X_i\to \proolim[i\in I]Y_i\to \proolim[i\in I]Z_i$
is exact in $\Pro(\shc)$. 

The subcategory $\shc$ of $\Pro(\shc)$ is closed by taking 
kernels, cokernels and extensions (\cite[Proposition 8.6.11]{KS06}).

If $\shc$ is an essentially small abelian (respectively, $\cor$-linear abelian) category, then  the category $\Pro(\shc)$ is equivalent to the opposite category of the
category of left exact functors  from $\shc$ to the category of abelian groups (respectively, $\cor$-vector spaces). 

If $\shc$ is an abelian monoidal category with a bi-exact tensor functor $\tens$, then so is $\Pro(\shc)$ with the natural extension of $\tens$.

\begin{lem} \label{lem:art}
Let $\shc$ be an abelian category, and let $X$ be an artinian object of $\shc$. Then any quotient and subobject of $X$ in $\Pro(\shc)$ are isomorphic to objects in $\shc$.
\end{lem}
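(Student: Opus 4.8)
The plan is to prove the two halves separately and to reduce the statement about subobjects to the one about quotients. \textbf{Quotients.} Let $p\cl X\epito Q$ be an epimorphism in $\Pro(\shc)$ and write $Q=\proolim[i\in I]Q_i$ as a small cofiltrant projective limit of objects $Q_i\in\shc$, with canonical projections $\pi_i\cl Q\to Q_i$. Set $Q_i'\seteq\Img(\pi_i\circ p\cl X\to Q_i)$. Since $X,Q_i\in\shc$ and $\shc$ is closed under kernels and cokernels in $\Pro(\shc)$, each $Q_i'$ lies in $\shc$ and is a quotient of $X$. Because $p$ is epic, $\Img(\pi_i)=\Img(\pi_i\circ p)=Q_i'$, so $\pi_i$ factors as $Q\epito Q_i'\monoto Q_i$ compatibly with the transition maps; the resulting morphism $Q\to\proolim[i]Q_i'$ is split monic (a retraction is $\proolim[i]Q_i'\to\proolim[i]Q_i=Q$) and epic (since $\proolim$ is exact and each $Q\to Q_i'$ is epic), hence an isomorphism. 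Writing $K_i\seteq\Ker(X\to Q_i')\subseteq X$, the short exact sequences $0\to K_i\to X\to Q_i'\to0$ assemble into an exact sequence of projective systems, so exactness of $\proolim$ gives $Q\simeq\proolim[i]Q_i'\simeq X/\proolim[i]K_i$.

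It remains to check that $\proolim[i]K_i$ lies in $\shc$, and this is where the artinian hypothesis enters. Cofiltrancy of $I$ makes $\{K_i\}_{i\in I}$ a downward directed family of subobjects of $X$ (for $k\to i$ one has $Q_k'\epito Q_i'$, hence $K_k\subseteq K_i$), and since $X$ is artinian this family has a least element $K_{i_0}=\bigcap_{i\in I}K_i$ (otherwise one builds a strictly descending chain, contradicting the descending chain condition). On one hand, the structure maps of the limit realize $\proolim[i]K_i$ as a subobject of $X$ contained in each $K_j$, so $\proolim[i]K_i\subseteq K_{i_0}$; on the other, the compatible inclusions $K_{i_0}\subseteq K_i$ induce a monomorphism $K_{i_0}\to\proolim[i]K_i$ over $X$. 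Hence $\proolim[i]K_i\simeq K_{i_0}\in\shc$, and therefore $Q\simeq X/K_{i_0}$ belongs to $\shc$.

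\textbf{Subobjects.} Let $\iota\cl Y\monoto X$ in $\Pro(\shc)$ and write $Y=\proolim[i]Y_i$. Since $X\in\shc$, we have $\Hom_{\Pro(\shc)}(\proolim[i]Y_i,X)\simeq\indlim_i\Hom_\shc(Y_i,X)$, so $\iota$ factors as $Y\to Y_{i_0}\epito Y'\monoto X$ with $Y'\seteq\Img(Y_{i_0}\to X)\in\shc$; as $\iota$ is monic, so is the composite $Y\to Y'$. Now $Y'$, being a subobject of the artinian object $X$, is itself artinian, so $C\seteq\Coker(Y\to Y')$ — which is a quotient of the artinian object $Y'\in\shc$ — belongs to $\shc$ by the quotient case just proved. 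Finally $Y=\Ker(Y'\to C)$ lies in $\shc$, because $\shc$ is closed under kernels in $\Pro(\shc)$.

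The step I expect to be the main obstacle is the identification $\proolim[i]K_i\simeq K_{i_0}$ in the quotient case: it must be carried out by combining the artinian hypothesis (to produce the minimum of the directed family $\{K_i\}$) with exactness of $\proolim$ and the universal property of cofiltrant projective limits, rather than through any cofinality of $i_0$ inside $I$, which need not hold. Once this is in place, the subobject case is purely formal.
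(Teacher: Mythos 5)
Your proof is correct, but it runs the reduction in the opposite direction from the paper. The paper treats subobjects first: it invokes the (standard, but not entirely free) fact that a subobject $Y\subseteq X$ in $\Pro(\shc)$ can be written as $\proolim[i]Y_i$ with each $Y_i$ a \emph{subobject of $X$} in $\shc$; artinianness then forces this system to stabilize, giving $Y\simeq Y_{i_0}\in\shc$, and the quotient case follows in one line because $\Ker(X\to Z)\in\shc$ and $\shc$ is closed under cokernels in $\Pro(\shc)$. You instead prove the quotient case directly — replacing $Q_i$ by the images $Q_i'$ of $X$, identifying $Q\simeq X/\proolim[i]K_i$ via exactness of $\proolim$, and using artinianness on the downward directed family $\{K_i\}$ of kernels — and then deduce the subobject case from it by factoring $Y\monoto X$ through some $Y_{i_0}$ (using $\Hom_{\Pro(\shc)}(\proolim[i]Y_i,X)\simeq\indlim_i\Hom_\shc(Y_i,X)$) and writing $Y=\Ker(Y'\to C)$ with $C$ a quotient of the artinian object $Y'$. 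What your route buys is self-containedness: you never need the representation of a subobject as a limit of subobjects, only the elementary Hom-formula for pro-objects; the cost is length, and your identification $\proolim[i]K_i\simeq K_{i_0}$ is exactly the same stabilization phenomenon the paper exploits, just applied to kernels rather than to the $Y_i$ themselves. One cosmetic point: your parenthetical reason that $Q\to\proolim[i]Q_i'$ is epic is cleaner if phrased via the levelwise epimorphisms $X\epito Q_i'$ from the constant system (or simply by noting that a split monomorphism whose retraction is a monomorphism is an isomorphism); as written the maps $Q\to Q_i'$ do not literally form a levelwise morphism of projective systems. This does not affect correctness.
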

\begin{proof}
Let $Y$ be an subobject of $X$ in $\Pro(\shc)$. Then we can write $Y \simeq \proolim[i\in I] Y_i$, where $\{Y_i\}_{i\in I}$ is a projective system of subobjects of $X$ indexed by a small cofiltrant category $I$. 

\smallskip
Since $X$ is artinian, there exists $i_0$ in $I$ such that $Y_i\isoto Y_{i_0} $ for any $i \to i_0$.
Hence $Y\isoto Y_{i_0}$ belongs to $\shc$.
If $Z$ is a quotient of $X$ in $\Pro(\shc)$, then $\Ker(X\to Z) \in \shc$ and hence $Z\in \shc$.
\end{proof}

Let $F\cl \shc\to\shc'$ be a functor.
Then $F$ naturally induces a functor $\Pro(\shc)\to\Pro(\shc')$.
{\em We denote this functor by the same letter $F$.} 

\section{Affinizations in abelian categories} \label{Sec: aff in abelian}

\subsection{Coherent objects in $\Pro(\shc)$} 

Let $\cor$ be a base field. Let $\shc$ be a $\cor$-linear category such that 
\eq&&\hs{2ex}\left\{
\parbox{75ex}{\be[{$\bullet$}]
\item $\shc$ is abelian,
\item $\shc$ is $\cor$-linear graded, namely $\shc$ is endowed with a $\cor$-linear auto-equivalence $q$,

\item any object has a finite length,
\item  any simple object $S$ is absolutely simple, i.e., $\cor\isoto\END_\shc(S)
\seteq\HOM_\shc(S,S)$.\ee
}\right.
\label{cond:fcat}
\eneq
It follows from \eqref{cond:fcat} that $\HOM_{\shc}(M,N)$ is finite-dimensional over $\cor$ for any $M,N \in \shc$ and
that $S\not \simeq q^kS $ for any $k\in \Z\setminus\{0\}$ and any non-zero 
$S \in \shc$.

By Lemma \ref{lem:art}, the full subcategory $\shc$ of $\Pro(\shc)$ is stable by taking subquotients.
The grading shift functor is extended to $\Pro(\shc)$. 

Let $A=\soplus_{k\in \Z} A_k$ be a commutative graded $\cor$-algebra such that 
\eq&&\left\{
\parbox{70ex}{
\be[$\bullet$]
\item $A_0\simeq\cor$,
\item $A_k=0$ for any $k<0$,
\item $A$ is a finitely generated $\cor$-algebra.
\ee}\right.
\label{cond:gring}
\eneq
In particular, $A$ is a noetherian ring and $\dim_\cor A_k < \infty$ for all $k$.
For each $m\in \Z$, set
\eqn 
A_{\ge m} \seteq\soplus _{k\ge m} A_k, 
\eneqn
which is an ideal of $A$.

{\em Hereafter, we assume that
$\shc$ satisfies \eqref{cond:fcat} and $A$ satisfies \eqref{cond:gring}.}

Recall that $\Modg(A, \Pro(\shc))$ denotes
the category of graded $A$-modules in $\Pro(\shc)$ 
(see \S\,\ref{subsec:graded}).

 For $\Ma, \Na \in \Modg(A, \Pro(\shc))$, we set 
$\HOM_{A}(\Ma,\Na)\seteq\soplus_{k\in \Z} \, \HOM_A(\Ma,\Na)_k  \, $ with
$\HOM_{A}(\Ma,\Na)_k\seteq\Hom_{\Modg(A, \Pro(\shc))}(q^k \Ma,\Na)$.   
Then
$\HOM_A(\Ma,\Na)$ has a structure of a graded $A$-module.   

\begin{definition} \label{Def: Pro_coh}
We denote by $\Proc(A,\shc)$ the full subcategory of $\Modg(A, \Pro(\shc))$ consisting of graded $A$-modules $\Ma$ in $\Pro(\shc)$ such that
\be[(a)]
\item $\Ma/A_{>0}\Ma \in \shc$,
\item $\Ma \isoto \proolim[k] \Ma / A_{\ge k} \Ma$, or equivalently $\proolim[k] A_{\ge k}\Ma  \simeq 0$.
\ee
Here $A_{\ge k}\Ma  \seteq \Im(A_{\ge k} \tens_A \Ma \to A \tens_A \Ma  ) \subset \Ma$. 
Note that $ X \tens_A  \Ma $ belongs to $\Modg(A, \Pro(\shc))$ for any 
$\Ma\in\Modg(A, \Pro(\shc))$ and any finitely generated graded $A$-module $X$.
\end{definition}

\begin{lem}\label{lem:AAM}
Let $\Ma \in \Modg(A,\Pro(\shc))$.
\bnum
\item
For any $m \in \Z_{\ge0}$, the canonical morphism
\eq
(A_{\ge m}/A_{>m}) \tens_\cor(\Ma/A_{>0}\Ma)\simeq (A_{\ge m}/A_{>m})\tens_A\Ma\to A_{\ge m}\Ma / A_{>m}\Ma\label{eq:morgr}
\eneq
is an epimorphism in $\shc$.
\item In particular, 
if $\Ma/A_{>0}\Ma \in \shc$, then
the object $A_{\ge m}\Ma / A_{>m}\Ma$ belongs to $\shc$ by {\rm Lemma \ref{lem:art}.}
\item If $\Ma$ is $A$-flat, then the morphism \eqref{eq:morgr} is an isomorphism.
\ee
\end{lem}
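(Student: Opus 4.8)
The three statements are about the graded pieces $A_{\ge m}\Ma/A_{>m}\Ma$, so the plan is to reduce everything to right-exactness of $\scbul\tens_A\scbul$ together with the identification $A_{\ge m}/A_{>m}\simeq A_m$ as a $\cor$-vector space (with trivial $A_{>0}$-action). First I would establish (i). The key observation is that $A_{\ge m}\Ma=\Im(A_{\ge m}\tens_A\Ma\to\Ma)$ by definition, and applying the right-exact functor $\scbul\tens_A\Ma$ to the exact sequence $A_{>m}\tens_A\Ma\to A_{\ge m}\tens_A\Ma\to (A_{\ge m}/A_{>m})\tens_A\Ma\to 0$, one sees that the image of $A_{\ge m}\tens_A\Ma$ modulo the image of $A_{>m}\tens_A\Ma$ is a quotient of $(A_{\ge m}/A_{>m})\tens_A\Ma$. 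Since $A_{>m}\Ma\supseteq \Im(A_{>m}\tens_A\Ma\to\Ma)$, there is a natural epimorphism $(A_{\ge m}/A_{>m})\tens_A\Ma\twoheadrightarrow A_{\ge m}\Ma/A_{>m}\Ma$, which is exactly the map \eqref{eq:morgr}; finally $(A_{\ge m}/A_{>m})\tens_A\Ma\simeq (A_{\ge m}/A_{>m})\tens_\cor(\Ma/A_{>0}\Ma)$ because $A_{>0}$ annihilates $A_{\ge m}/A_{>m}$, and this quotient $\Ma/A_{>0}\Ma$ lives in $\Pro(\shc)$. The epimorphism is a morphism in $\Pro(\shc)$ a priori, but its target $A_{\ge m}\Ma/A_{>m}\Ma$ is a subquotient of $\Ma/A_{>m}\Ma$.

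For (ii), I would feed the hypothesis $\Ma/A_{>0}\Ma\in\shc$ into (i): since $A_{\ge m}/A_{>m}\simeq A_m$ is finite-dimensional over $\cor$ (this uses \eqref{cond:gring}), the source of \eqref{eq:morgr} is a finite direct sum of grading shifts of $\Ma/A_{>0}\Ma$, hence lies in $\shc$. Now $A_{\ge m}\Ma/A_{>m}\Ma$ is a quotient in $\Pro(\shc)$ of an object of $\shc$; by Lemma \ref{lem:art} (applied with the artinian — indeed finite-length — object $A_{\ge m}/A_{>m}\tens_\cor(\Ma/A_{>0}\Ma)$ of $\shc$), the quotient is isomorphic to an object of $\shc$. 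One should double-check that $\Ma/A_{>m}\Ma$ is itself a legitimate object to take subquotients of, but since $\shc\subset\Pro(\shc)$ is closed under subquotients (stated right after \eqref{cond:fcat}), this is automatic once we know the ambient object is in $\shc$; the cleanest route is simply to view $A_{\ge m}\Ma/A_{>m}\Ma$ as the cokernel/image directly, as above.

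For (iii), assume $\Ma$ is $A$-flat, i.e.\ $\scbul\tens_A\Ma\colon\Modgc(A^{\opp})\to\Pro(\shc)$ is exact. Then applying this exact functor to the short exact sequence $0\to A_{>m}\to A_{\ge m}\to A_{\ge m}/A_{>m}\to 0$ of graded $A$-modules yields a short exact sequence $0\to A_{>m}\tens_A\Ma\to A_{\ge m}\tens_A\Ma\to (A_{\ge m}/A_{>m})\tens_A\Ma\to 0$ in $\Pro(\shc)$. Flatness also forces $A_{\ge m}\tens_A\Ma\isoto A_{\ge m}\Ma$ and $A_{>m}\tens_A\Ma\isoto A_{>m}\Ma$ (the maps $A_{\ge k}\tens_A\Ma\to\Ma$ become injective), so the cokernel computation gives $A_{\ge m}\Ma/A_{>m}\Ma\isoto (A_{\ge m}/A_{>m})\tens_A\Ma$, which is precisely the assertion that \eqref{eq:morgr} is an isomorphism. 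The main obstacle I anticipate is purely bookkeeping: being careful that ``$A_{\ge k}\Ma$'' is defined as an \emph{image} inside $\Ma$ rather than as the tensor product itself, so that in the non-flat case (i) and (ii) the map \eqref{eq:morgr} really is only an epimorphism, while in the flat case the tensor products and the images agree and one upgrades to an isomorphism; keeping the distinction between $\tens_A\Ma$ and its image straight throughout is the only delicate point.
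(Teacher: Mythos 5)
Your proposal is correct and follows essentially the same route as the paper: the identification $(A_{\ge m}/A_{>m})\tens_A\Ma\simeq(A_{\ge m}/A_{>m})\tens_\cor(\Ma/A_{>0}\Ma)$, the comparison of the right-exact sequence $A_{>m}\tens_A\Ma\to A_{\ge m}\tens_A\Ma\to(A_{\ge m}/A_{>m})\tens_A\Ma\to 0$ with the quotient $A_{\ge m}\Ma/A_{>m}\Ma$ for (i) and (iii), and Lemma \ref{lem:art} for (ii). The bookkeeping point you flag (image versus tensor product) is exactly what the paper's commutative diagram \eqref{eq:am} encodes.
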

\begin{proof}
Note that
\eqn
(A_{\ge m}/A_{>m})\tens_A\Ma
 &&\simeq 
\bl( A_{\ge m}/A_{>m} )\tens_\cor (A/A_{> 0})\br \tens_A \Ma \\
&&\simeq 
( A_{\ge m}/A_{>m} )\tens_\cor \bl(A/A_{> 0}) \tens_A \Ma\br
\simeq ( A_{\ge m}/A_{>m} )\tens_\cor ( \Ma/A_{>0}\Ma).
\eneqn

We have a commutative diagram in $\Modg(A, \Pro(\shc))$
\eq&&
\ba{l}\xymatrix{
A_{>m}\tens_A\Ma\ar[r]\ar@{->>}[d]&A_{\ge m}\tens_A\Ma\ar[r]\ar@{->>}[d]
&(A_{\ge m}/A_{>m})\tens_A\Ma\ar[r]\ar[d]&0\\
A_{>m}\Ma\ar[r]&A_{\ge m}\Ma\ar[r]
&A_{\ge m}\Ma/A_{>m}\Ma\ar[r]&0\\
}\ea\label{eq:am}
\eneq
with exact rows.
Since the the middle vertical arrow is an epimorphism,
so is the  right vertical arrow.

If $\Ma$ is $A$-flat, then the left and the middle vertical arrows 
in \eqref{eq:am}
are isomorphisms, and hence so is the  right vertical arrow. 
\QED
\begin{prop}\label{prop:noether}
The full subcategory $\Proc(A,\shc)$ of $\Modg(A,\Pro(\shc))$ has the following properties.
\bnum
\item The category $\Proc(A,\shc)$ is stable by taking subquotients 
and taking extensions
in $\Modg(A,\Pro(\shc))$. 
\label{item:affstab} 
\item Any object of $\Proc(A,\shc)$ is noetherian.\label{it:Noethe}
\item For any $\Ma\in\Proc(A,\shc)$, we have 
\bna
\item $A_{\ge m}\Ma/A_{\ge n}\Ma\in\shc$ for any $m,n\in\Z$ such that $0\le m\le n$,
\label{item: Mmn}
\item for any $m\in\Z_{\ge0}$ and any $\Na\in\Proc(A,\shc)$ 
such that $\Na\subset \Ma$, there exists $n>0$ such that
$\Na\cap  A_{\ge n}\Ma\subset A_{\ge m}\Na$,\label{it:subK}
\item if $\Ma=A_{>0}\Ma$, then $\Ma=0$. \label{it:Nakayama}
\ee\label{item:mic}
\ee
\end{prop}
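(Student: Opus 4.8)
I would prove the statements in the order (iii)(a), then (iii)(b), then (i), then (ii), and lastly (iii)(c); the Artin--Rees statement (iii)(b) is the engine behind (i) and (ii), while (iii)(a) and (iii)(c) are essentially formal. For (iii)(a): if $\Ma\in\Proc(A,\shc)$ then $\Ma/A_{>0}\Ma\in\shc$ by definition, so $A_{\ge k}\Ma/A_{>k}\Ma\in\shc$ for all $k\ge0$ by Lemma~\ref{lem:AAM}; since $A_{\ge m}\Ma/A_{\ge n}\Ma$ carries a finite filtration with subquotients $A_{\ge k}\Ma/A_{\ge k+1}\Ma$ $(m\le k<n)$ and $\shc$ is closed under extensions in $\Pro(\shc)$, the object lies in $\shc$. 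In particular $\Ma/A_{\ge n}\Ma\in\shc$ for every $n$. For (iii)(c): if $\Ma=A_{>0}\Ma=A_{\ge1}\Ma$, then iterating yields $\Ma=A_{\ge n}\Ma$ for all $n\ge1$, hence $\Ma\isoto\proolim[n]\Ma/A_{\ge n}\Ma=0$ by Definition~\ref{Def: Pro_coh}.

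\textbf{The Artin--Rees statement (iii)(b).} Here I would transcribe the classical argument to the setting of graded $A$-modules whose components lie in $\shc$. Put $L\seteq\Ma/A_{>0}\Ma\in\shc$ and form the associated graded $\gr\Ma\seteq\soplus_{k\ge0}A_{\ge k}\Ma/A_{\ge k+1}\Ma$; by Lemma~\ref{lem:AAM} the multiplication morphisms realize $\gr\Ma$ as an $A$-module quotient of $A\otimes_\cor L$. The central lemma to establish is: \emph{$A\otimes_\cor L$ is noetherian} (graded $A$-submodules satisfy the ascending chain condition) for every $L\in\shc$. I would prove this by d\'evissage on the length of $L$, using exactness of $A\otimes_\cor(-)$, reducing to $L=S$ absolutely simple, where the graded $A$-submodules of $A\otimes_\cor S$ correspond bijectively to the graded ideals of $A$ (because $\END_\shc(S)\simeq\cor$ and finite direct sums of $S$ are semisimple), so noetherianity descends from that of $A$. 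Hence $\gr\Ma$, a quotient of $A\otimes_\cor L$, is noetherian. Now for $\Na\subset\Ma$ the induced filtration $\Na^{(k)}\seteq\Na\cap A_{\ge k}\Ma$ gives a graded $A$-submodule $\soplus_k\Na^{(k)}/\Na^{(k+1)}\hookrightarrow\gr\Ma$, which, being noetherian, is generated in degrees $\le c$ for some $c$; lifting this relation gives $\Na^{(k)}\subset\Na^{(k+1)}+A_{\ge k-c}\Na$ for all $k>c$, hence (iterating) $\Na^{(k)}\subset\Na^{(k+l)}+A_{\ge k-c}\Na$ for all $l\ge0$. Since $\Na^{(k+l)}\subset A_{\ge k+l}\Ma$, $\proolim[l]A_{\ge k+l}\Ma=0$, and $\proolim$ is exact, passing to the quotient by $A_{\ge k-c}\Na$ and letting $l\to\infty$ forces $\Na^{(k)}\subset A_{\ge k-c}\Na$. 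Given $m$, the choice $n=m+c+1$ then yields $\Na\cap A_{\ge n}\Ma\subset A_{\ge m}\Na$.

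\textbf{Deducing (i) and (ii).} For a subobject $\Na\subset\Ma$: the quotient $\Na/(\Na\cap A_{>0}\Ma)$ embeds into $\Ma/A_{>0}\Ma\in\shc$, while (iii)(b) with $m=1$ together with (iii)(a) exhibits $(\Na\cap A_{>0}\Ma)/A_{>0}\Na$ as a subquotient of some $\Ma/A_{\ge n}\Ma\in\shc$, so $\Na/A_{>0}\Na$ is an extension of two objects of $\shc$, hence in $\shc$. Moreover (iii)(b) makes $\{A_{\ge k}\Na\}_k$ and $\{\Na\cap A_{\ge k}\Ma\}_k$ cofinal, so $\proolim[k]A_{\ge k}\Na\hookrightarrow\proolim[k]A_{\ge k}\Ma=0$, and thus $\Na\in\Proc(A,\shc)$. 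Quotients $\Ma\twoheadrightarrow\Qt$ are easier: $\Qt/A_{>0}\Qt$ is a quotient of $\Ma/A_{>0}\Ma$, $A_{\ge k}\Qt$ is the image of $A_{\ge k}\Ma$, and $\proolim$ is exact; extensions reduce to these two cases together with (iii)(b) for the subobject. For (ii): an increasing chain of subobjects $\Na_j\subset\Ma$ induces (via the filtrations $\Na_j\cap A_{\ge\bullet}\Ma$) an increasing chain of graded $A$-submodules of the noetherian $\gr\Ma$, which stabilizes; and if $\Na\subset\Na'\subset\Ma$ have equal induced associated graded, then $\Na'\subset\bigcap_k(\Na+A_{\ge k}\Ma)=\Na$, using $\bigcap_k A_{\ge k}(\Ma/\Na)=\proolim[k]A_{\ge k}(\Ma/\Na)=0$ (valid since $\Ma/\Na\in\Proc(A,\shc)$ by (i)). Hence $\Na_j$ stabilizes.

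\textbf{Main obstacle.} The one substantive point is the Hilbert-basis-type noetherianity of $A\otimes_\cor L$ as a graded module with components in $\shc$, together with making precise (and verifying) the notions of ``finitely generated'' and ``generated in bounded degree'' for such modules and checking that the associated-graded formalism transports verbatim; I also expect a little care is needed because $\gr\Ma$ is not an object of $\shc$ itself. After that, the Artin--Rees lemma and all of (i), (ii) are routine transcriptions of their commutative-algebra originals, the recurring bookkeeping being exactness of $\proolim$ and the stability of $\shc$ under subquotients and extensions in $\Pro(\shc)$, used to pass between pro-objects and genuine objects of $\shc$.
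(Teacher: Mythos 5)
Your overall route is genuinely different from the paper's. The paper proves all of (i)--(iii) by induction on the number of homogeneous generators of $A$, reducing to a one-variable Artin--Rees statement (its \eqref{claim:z}) relative to a single generator $z_1$ and invoking the induction hypothesis for $B=\cor[z_2,\dots,z_r]$. You instead establish a global Hilbert-basis theorem for the associated graded: $A\otimes_\cor L$ is noetherian for $L\in\shc$ (by d\'evissage to an absolutely simple $S$, where graded submodules of $A\otimes_\cor S$ biject with graded ideals of $A$), hence $\gr\Ma$ is noetherian, and the full Artin--Rees statement and noetherianity of $\Ma$ follow by transcribing the classical arguments. This part of your proposal is correct (the bijection of subobjects of $S^{\oplus n}$ with subspaces of $\cor^n$, the ``generated in bounded degree'' consequence of ACC, the lifting step $\Na^{(k)}\subset\Na^{(k+1)}+A_{\ge k-c}\Na$, and the passage to the limit using exactness of $\proolim$ all check out), and it buys a cleaner, induction-free treatment of (ii) and (iii)(b); the price is the extra bookkeeping of working with $\gr\Ma$ as a formal graded object outside $\Pro(\shc)$, which you correctly flag.

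There is, however, a genuine gap in the extension part of (i). You dismiss it with ``extensions reduce to these two cases together with (iii)(b) for the subobject,'' but for an extension $0\to\Ma'\to\Ma\to\Ma''\to0$ with $\Ma',\Ma''\in\Proc(A,\shc)$, the object $\Ma$ is not yet known to be coherent, so your Artin--Rees statement cannot be applied to $\Ma'\subset\Ma$: the final step of your Artin--Rees proof uses $\proolim_l A_{\ge l}\Ma\simeq 0$, which is precisely the condition being verified for $\Ma$. If you run your argument with only the noetherianity of $\gr\Ma$ (which does hold, since $\Ma/A_{>0}\Ma\in\shc$ is easy), you obtain $\Ma'\cap A_{\ge k}\Ma\subset A_{\ge k-c}\Ma'+\Na$ with $\Na\seteq\proolim_l A_{\ge l}\Ma$, and since $\Na\subset\Ma'\cap A_{\ge k}\Ma$ this is circular and does not force $\Na=0$. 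What is actually needed is the paper's separate argument: one first observes that $\Na\subset\Ma'$ (because $\Na\to\Ma''$ factors through $\proolim_l A_{\ge l}\Ma''\simeq0$), and then that $\Na\simeq\proolim_k A_{>k}\Na$ (using $A_{>k}\,\proolim_j A_{>j}\Ma\simeq\proolim_j A_{>k}A_{>j}\Ma$ and the cofinality of the products $A_{>k}A_{>j}$ among the $A_{>m}$), so that $\Na\subset\proolim_k A_{>k}\Ma'\simeq0$. You should supply this ``$\Na$ is $A_{>0}$-divisible'' step (or an equivalent substitute); without it the extension statement is unproved. The gap is localized: your proofs of (ii), (iii) and of the subobject/quotient parts of (i) do not depend on it.
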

\begin{proof}
\eqref{item: Mmn} immediately follows from Lemma \ref{lem:AAM}.

Take a set $\{z_i\}_{1\le  i \le r}$ of homogeneous generators of
the $\cor$-algebra $A$ with positive degrees.  \cmtm{$l$ is used later.}
We shall show the statements by induction on $r$.
When $r=0$,  they are obvious. Assume that $ r>0$.
We set $B\seteq \cor[z_2,\ldots,z_r] \subset A$. Then \eqref{item:affstab}--\eqref{item:mic} with $A$  replaced by $B$ hold by the induction hypothesis.
Set $d_1\seteq \deg(z_1)\in \Z_{>0}$ and $d\seteq\max\set{\deg(z_i)}%
{1\le i\le r }$. 

Then $A_{\ge k} = \sum_{j\in\Z_{\ge 0}} z_1^j B_{\ge k-d_1j}$.
In particular, we have $A_{>0} = z_1A+B_{>0}$. 

Since $A_{\ge dk} \subset (A_{>0})^k \subset A_{\ge k}$  for any $k\ge 1$, \eqref{it:Nakayama} is obvious. 
 It is also obvious that
 \eq
\text{$\Proc(A,\shc)$  is stable by taking quotients.}\label{eq:quotient}
\eneq
Let us first show that 
\eq
\parbox{70ex}{Let $\Ma\in\Mod(A,\Pro(\shc))$, and assume that there exists $n\in\Z_{>0}$ such that $z_1^n\Ma=0$.
Then $\Ma\in\Proc(A,\shc)$ if and only if $\Ma\in\Proc(B,\shc)$.}
\label{claim:AB}
\eneq

Assume that $\Ma \in \Proc(B,\shc)$. 
Since there is an epimorphism $\Ma/B_{>0}\Ma \epito \Ma/A_{>0}\Ma$ in $\Pro(\shc)$,  we have $\Ma/A_{>0}\Ma \in \shc$.
Since $$A_{\ge k } \subset Az_1^n+B_{>k-nd_1}A \qt{for $k\in \Z_{\ge 0}$},$$
we have
$\proolim[k] A_{\ge k}\Ma \subset \proolim[k] (B_{>k-nd_1}\Ma)\simeq 0$.
Hence we obtain $\Ma \in \Proc(A,\shc)$.
Conversely, assume that $\Ma \in \Proc(A,\shc)$. Then we have
$\proolim[k] B_{\ge k}\Ma \subset \proolim[k] A_{\ge k}\Ma\simeq 0$. 
Let us show $\Ma/ B_{> 0}\Ma \in \shc$.
Note that $A_{\ge nd_1}\subset z_1^nA+B_{>0}A$. 
Since $\Ma / A_{\ge nd_1}\Ma \in \shc$ by \eqref{item: Mmn}, its quotient $\Ma/(z_1^nA+B_{>0}A)\Ma \simeq \Ma / B_{>0}\Ma$ belongs to $\shc$. 
It completes the proof of \eqref{claim:AB}.

\medskip
By \eqref{eq:quotient} and \eqref{claim:AB}, we have
\eq&&
\hs{3ex}\parbox{70ex}{for any $\Ma\in\Proc(A,\shc)$ and $j,k\in\Z_{\ge0}$
such that $j\le k$, we have $z_1^j\Ma/z_1^{k}\Ma\in\Proc(B,\shc)$.}\label{claim:ABk}
\eneq

\smallskip
We shall show  that
\eq
\parbox{70ex}{for any $\Ma\in\Proc(A,\shc)$ and 
$\Na\in\Modg(A,\Pro(\shc))$ such that $\Na\subset \Ma$,
there exists $c\in\Z_{>0}$ such that
$\Na\cap z_1^{k+c}\Ma\subset z_1^k\Na$ for any $k\in\Z_{\ge0}$.}\label{claim:z}
\eneq
Since $\Ma/z_1\Ma\in\Proc(B,\shc)$ is noetherian,
the increasing  sequence $\st{(z_1^k)^{-1}\Na+z_1\Ma}_{k\in\Z_{\ge0}}$ 
of subobjects of $\Ma$ is stationary.
Hence there exists $k_0\in\Z_{\ge0}$ such that
$(z_1^k)^{-1}\Na+z_1\Ma\subset (z_1^{k_0})^{-1}\Na+z_1\Ma$ for any $k\ge k_0$.
Hence we have
$\Na\cap z_1^k\Ma\subset  z_1^{k-k_0}\Na+z_1^{k+1}\Ma$.
Let $l\in\Z_{\ge0}$. Then,  we have
$\Na\cap z_1^{k}\Ma\subset z_1^l\Na+\Na\cap z_1^{k+1}\Ma$ for any $k\ge k_0+l$. Hence, by induction on $k$, we have
$\Na\cap z_1^{k_0+l}\Ma\subset z_1^l\Na+\Na\cap z_1^{k}\Ma$ for any $k\ge k_0+l$.
Since $\proolim[k]z_1^{k}\Ma\simeq0$, we obtain
$\Na\cap z_1^{k_0+l}\Ma\subset z_1^l\Na$.
It shows \eqref{claim:z}.

\medskip
Let us show 
\eq
\text{$\Proc(A,\shc)$ is stable by extensions.}\label{claim:ext}
\eneq
Let $0\to \Ma'\to \Ma\to \Ma''\to0$ be an exact sequence in $\Mod(A, \Pro(\shc))$
such that $\Ma', \Ma''\in\Proc(A,\shc)$.
Let us show that $\Ma\in \Proc(A,\shc)$.

Since we have an exact sequence 
$$\Ma'/A_{>0}\Ma'\to \Ma/A_{>0}\Ma\to \Ma''/A_{>0}\Ma''\to0$$
in which the left and right terms belong to $\shc$,
we have $\Ma/A_{>0}\Ma\in\shc$.

Let us show $\proolim[k]A_{>k}\Ma\simeq0$.
Since the composition $\Na\seteq\proolim[k]A_{>k}\Ma\monoto\Ma\to\Ma''$
factors through $\proolim[k]A_{>k}\Ma''\simeq0$,
it is a zero morphism.
Hence
$\Na\subset \Ma'$.
Hence we have
$$
0\simeq\proolim[k]A_{>k}\Ma'\supset
\proolim[k]A_{>k}\Na
\simeq \proolim[k,j]A_{>k}A_{>j}\Ma\simeq
 \proolim[k]A_{>k}\Ma,$$
which implies that $\proolim[k]A_{>k}\Ma\simeq0$.
Thus we obtain \eqref{claim:ext}.

\medskip
Now, we are ready to prove
\eq
\text{$\Proc(A,\shc)$ is stable by taking subobjects.}\label{claim:sub}
\eneq
Let $\Ma\in\Proc(A,\shc)$ and $\Na\in \Modg(A,\Pro(\shc))$ such that $\Na\subset \Ma$.
By \eqref{claim:z}, there exists $m\in\Z_{>0}$ such that
$\Na\cap z_1^m\Ma\subset z_1\Na$.
Since $\Ma/z_1^m\Ma\in\Proc(B,\shc)$ by \eqref{claim:ABk}, 
its subobject
$\Na/(\Na\cap z_1^m\Ma)$ belongs to $\Proc(B,\shc)$ by the induction hypothesis.
Hence its quotient $\Na'\seteq\Na/z_1\Na$ also belongs to $\Proc(B,\shc)$.
Since $\Na'/B_{>0}\Na'\in \shc$
and we have $(\Na/z_1\Na)/B_{>0}(\Na/z_1\Na)\simeq
\Na/(z_1\Na+B_{>0}\Na)\epito \Na/A_{>0}\Na$, we conclude that
$\Na/A_{>0}\Na\in \shc$. 
Moreover, $\proolim[k]A_{\ge k}\Na\subset\proolim[k]A_{\ge k}\Ma$
vanishes. Hence $\Na\in\Proc(A,\shc)$ and
we obtain \eqref{claim:sub}.

Thus we have completed the proof of \eqref{item:affstab}.

\medskip
Let us show \eqref{it:subK}. 
Since $\proolim[k](\Na\cap A_{\ge k}\Ma)\simeq0$, we have
$\Na\simeq\proolim[k] \Na/(\Na\cap A_{\ge k}\Ma)$.
On the other hand, \eqref{item: Mmn} implies that $\Na/A_{\ge m}\Na$ belongs to $\shc$.
Therefore, there exists $n\in\Z_{>0}$ such that
the morphism $\Na\simeq\proolim[k] \Na/(\Na\cap A_{\ge k}\Ma)\to \Na/A_{\ge m}\Na$
factors through $\Na/(\Na\cap A_{\ge n}\Ma)$.
Hence we obtain $\Na\cap A_{\ge n}\Ma\subset A_{\ge m}\Na$.

\medskip
It remains to prove \eqref{it:Noethe}.
Let $\st{\Na_j}_{j\in\Z_{>0}}$ be an increasing sequence of subobjects
of $\Ma\in\Proc(A,\shc)$. We shall show that it is stationary.

 The family $\st{(z_1^k)^{-1}\Na_j+z_1\Ma}_{j,k\in\Z_{>0}}$ is stationary 
since $\Ma/z_1\Ma \in \Proc(B,\shc)$ is noetherian by the induction hypothesis. Hence there exist $k_0,j_0\in\Z_{>0}$ such that
$(z_1^k)^{-1}\Na_j+z_1\Ma=(z_1^{k_0})^{-1}\Na_{j_0}+z_1\Ma$ for $j\ge j_0$ and $k\ge k_0$.
Hence we have
$$\Na_j\cap z_1^k\Ma\subset \Na_{j_0}+z_1^{k+1}\Ma\qt{for $k\ge k_0$ and $j\ge j_0$.}$$
It implies that
$$ \Na_{j_0}+\Na_j\cap z_1^k\Ma\subset \Na_{j_0}+\Na_j\cap z_1^{k+1}\Ma
\qt{for any $k\ge k_0$. }$$
Hence, by induction, we obtain
$$ \Na_{j_0}+\Na_j\cap z_1^{k_0}\Ma\subset \Na_{j_0}+\Na_j\cap z_1^{k+1}\Ma
\qt{for any $k\ge k_0$.}$$
Taking $\proolim[k]$, we obtain
$$ \Na_{j_0}+\Na_j\cap z_1^{k_0}\Ma\subset \Na_{j_0}.$$
Hence $\Na_j\cap z_1^{k_0}\Ma\subset \Na_{j_0}$ for $j\ge j_0$.
On the other hand,
$\st{\Na_j+z_1^{k_0}\Ma}_{j\in\Z_{>0}}$ is stationary since
$\Ma/z_1^{k_0}\Ma\in\Proc(B,\shc)$ is noetherian.
Hence, there exists $j_1\ge j_0$ such that
$\Na_j+z_1^{k_0}\Ma=\Na_{j_1}+z_1^{k_0}\Ma$ for $j\ge j_1$.
Hence we have
$\Na_j\subset \Na_{j_1}+\Na_j\cap z_1^{k_0}\Ma\subset \Na_{j_1}+\Na_{j_0}=\Na_{j_1}$ for any $j\ge j_1$.
\end{proof}

\Lemma\label{lem:fin}
Let $X\in\Modgc(A)$ and $\Ma\in\Proc(A,\shc)$. Then we have:
\bnum
\item $X\tens[A]\Ma\in \Proc(A,\shc)$,
\item if $\dim_\cor X<\infty$, then $X\tens[A]\Ma\in\shc$.
\ee
\enlemma
\Proof
(i)\ Taking an epimorphism $A^{\oplus r}\epito X$, we conclude (i) since
$X\tens[A]\Ma$ is a quotient of $\Ma^{\oplus r}$.

\snoi
(ii)\ Since $\dim_\cor \END_A(X)<\infty$, there exists $m\in\Z$ such that $\END_A(X)_{\ge m}=0$.
Then $A_{\ge m}X=0$, which implies that
$A_{\ge m}\bl X\tens[A]\Ma\br=0$.
Hence $X\tens[A]\Ma\in\shc$ by Proposition \ref{prop:noether} \eqref{item: Mmn}.

\QED

\Lemma\label{lem:red}
Let $f\cl \Ma\to\Na$ be a morphism in $\Proc(A,\shc)$.
\bnum 
\item If $\tf\seteq(A/A_{>0})\tens[A]f\cl\Ma/A_{>0}\Ma\to\Na/A_{>0}\Na$ 
is an epimorphism, then $f$ is an epimorphism.
\item If $\Na$ is $A$-flat and $\tf$ is an isomorphism, then
$f$ is an isomorphism.
\ee
\enlemma
\Proof
(i)\ If $\tf$ is an epimorphism, then 
$(A/A_{>0})\tens[A]\Coker(f)\simeq0$, and hence
$\Coker(f)\simeq0$ by Proposition \ref{prop:noether}.

\snoi
(ii) Since $\Na$ is $A$-flat, 
we have an exact sequence 
$$0\to (A/A_{>0})\tens[A]\Ker(f)\to \Ma/A_{>0}\Ma\to\Na/A_{>0}\Na \to0.$$
Hence $\Ker(f)\simeq0$.
\QED

\Prop\label{prop:fin}
Let $\Ma,\Na\in\Proc(A,\shc)$
and set $M\seteq\Ma/A_{>0}\Ma\in\shc$ and $N\seteq\Na/A_{>0}\Na\in\shc$.
{\em Assume that $\Na$ is $A$-flat.}
Let $X$ be a finitely generated graded $A$-module
and $\la,\mu\in\Z$.
Assume that $\HOM_\shc(M,N)$ has degree $\ge \la$ and
$X$ has degree $\ge \mu$.
Then we have 
\bnum
\item 
$\HOM_A(\Ma, X\tens[A]\Na)$ has degree $\ge \mu+\la$.
\label{it:gela}
\item
$\HOM_A(\Ma,X\tens[A]\Na)$ is a finitely generated $A$-module,
\item there exists an integer $m\in\Z_{\ge0}$ such that
$$\HOM_A\bl\Ma,X_{\ge k+m}\tens[A]\Na\br\subset A_{\ge k}\HOM_A(\Ma,X\tens[A]\Na)
\qt{for any $k\in\Z_{\ge0}$,}$$
as a subset of $\HOM_A(\Ma,X\tens[A]\Na)$.
\ee
\enprop
\Proof
Set $\Laa=X\tens[A]\Na\in\Proc(A,\shc)$ and $\Laa_{\ge m}=X_{\ge m}\tens[A]\Na\subset\Laa$.
Since $\Na$ is $A$-flat, we have
$$\Laa_{\ge m}/\Laa_{>m}\simeq
(X_{\ge m}/X_{> m})\tens[A]\Na
\simeq X_m\tens[\cor]N.$$

(i)\ 
We have an inclusion
\eq \label{eq:ST}\\
&&\hs{2ex}\xymatrix@C=3.4ex{
\dfrac{\HOM_A(\Ma,\Laa_{\ge m})}{\HOM_A(\Ma,\Laa_{>m})}\akew \ar@{>->}[r] 
&\HOM_A\bl\Ma,\Laa_{\ge m}/\Laa_{>m}\br
\ar[r]^-{\sim } &X_m\tens[\cor]\HOM_\shc(M,N).
}\nn
\eneq
Hence,
$\HOM_A(\Ma,\Laa_{\ge m}/\Laa_{>m})$
has degree $\ge m+\la$. Therefore,
$\HOM_A(\Ma,\Laa_{\ge m}/\Laa_{\ge s})$ has also degree $\ge m+\la$ for any $s\ge m$.
Hence, we obtain
$$\HOM_A(\Ma,\Laa)_k\simeq
\prolim[s]\HOM_A(\Ma,\Laa_{\ge \mu}/\Laa_{\ge s})_k\simeq 0\qt{if $k<\mu+\la$.}$$
Thus we obtain \eqref{it:gela}.
Applying \eqref{it:gela} to $X_{\ge m}$, we obtain
\eq\text{$\HOM_A(\Ma,\Laa_{\ge m})$ has degree $\ge m+\la$.}
\label{eq:La}
\eneq

\smallskip
Let $S=\soplus_{m\in\Z}S_m$ be the graded $A$-module with
$S_m=\HOM_A(\Ma,\Laa_{\ge m})/\HOM_A(\Ma,\Laa_{>m})$.
Then by \eqref{eq:ST} we have a monomorphism
$$S\monoto X\tensc  \HOM_\shc(M,N),$$
which implies that $S$ is a finitely generated graded  $A$-module.
Hence there exist $r\in\Z_{\ge\mu}$ and 
 a finite-dimensional graded $\cor$-subspace $K_j\subset \HOM_A(\Ma,\Laa_{\ge j})$ ($\mu\le j\le r$)
such that
$$\HOM_A(\Ma,\Laa_{\ge m})\subset \sum_{\mu\le j\le r}A_{\ge m-j}K_j
+\HOM_A(\Ma,\Laa_{>m})\qt{for any $m\in\Z$.}$$
Hence by  induction on $t$, we obtain
$\HOM_A(\Ma,\Laa_{\ge m})\subset \sum_{j=\mu}^rA_{\ge m-j}K_j+\HOM_A(\Ma,\Laa_{\ge t})$ 
for any $m\in\Z$ and $t\in\Z$ such that $t\ge m$. 
Hence \eqref{eq:La} implies that $\HOM_A(\Ma,\Laa_{\ge m})\subset \sum_{j=0}^rA_{\ge m-j}K_j$ from which (ii) and (iii) follows. 
\QED

\Prop\label{prop:base}
Let $\Ma,\Na\in\Proc(A,\shc)$, and 
set $M=\Ma/A_{>0}\Ma$ and $N=\Na/A_{>0}\Na$.
Assume that $\Na$ is $A$-flat.
Let $H_0$ be a graded $\cor$-subspace of $\HOM_A(\Ma,\Na)$.
\bnum
\item If the composition $H_0\to\HOM_A(\Ma,\Na) \To[(A/A_{>0}) \tensa \scbul ]  \HOM_\shc(M,N)$ is injective, then, $A\tensc H_0\to\HOM_A(\Ma,\Na)$ is injective.
\item If $H_0\to\HOM_\shc(M,N)$ is surjective, then, $A\tensc H_0\to\HOM_A(\Ma,\Na)$ is surjective.
\ee
\enprop
\Proof
Let $s\in\Z$.
Then the composition
\eqn
g_m\cl  \bl(A_{\ge m}/A_{\ge m+1})\tensc H_0\br_s&\to&
\bl(A_{\ge m}/A_{\ge m+1})\tensc \HOM_\shc(M,N)\br_s\\
&&\hs{10ex}\isoto
\HOM_A(\Ma,A_{\ge m}\Na/A_{\ge m+1}\Na)_s
\eneqn
is injective (resp.\ surjective) under the assumption in (i) (resp.\ (ii)).
We shall show that
$f_m\cl (A_{\ge m}\tensc H_0)_s\to \HOM_A (\Ma,A _{\ge m}\Na)_s$ is 
injective (resp.\ surjective)
by the descending induction on $m\in\Z_{\ge0}$.
If $m\gg0$, then it is true since
$(A_{\ge m}\tens H_0)_s\simeq \HOM_A (\Ma,A_{\ge m}\Na)_s\simeq0$
 by Proposition \ref{prop:fin}.

Now assume that
$f_{m+1}$ is injective (resp.\ surjective).
Consider a commutative diagram with exact rows:

\scb{.9}{\parbox{80ex}{
\eqn
&&\xymatrix@R=7ex{
0\ar[r]&\bl A_{\ge m+1}\tens H_0\br_s\ar[r]\ar[d]^{f_{m+1}}&\bl A_{\ge m}\tens H_0\br_s\ar[r]\ar[d]^{f_{m}}
&\bl (A_{\ge m}/A_{\ge m+1})\tens H_0\br_s\ar[d]^{g_m}\ar[r]&0\\
0\ar[r]&\HOM_A(\Ma,A_{\ge m+1}\Na)_s\ar[r]&
\HOM_A (\Ma,A_{\ge m}\Na)_s\ar[r]&\HOM_A (\Ma,A_{\ge m}\Na/A_{\ge m+1}\Na)_s\,.
}
\eneqn}}

Since $f_{m+1}$ and $g_m$
are injective (resp.\ surjective), so is $f_{m}$,
and the induction proceeds.
\QED

\Prop\label{prop:uniquehom}
Let $\Ma,\Na\in\Proc(A,\shc)$, and
set $M=\Ma/A_{>0}\Ma$ and $N=\Na/A_{>0}\Na$.
Assume that \bna
\item $A$ is an integral domain, \label{it.int}
\item 
$\Na$ is $A$-flat,\label{it:Nflat}
\item $\HOM_\shc(M,N)=\cor f$ for a non-zero $f$ 
of homogeneous degree $\la\in\Z$,
\item there exists a non-zero $F\in\HOM_A(\Ma,\Na)$.
\ee
Then, $\HOM_A(\Ma,\Na)$ is a free $A$-module generated by an element $\tF
\in\HOM_A(\Ma,\Na)_\la$ such that $(A/A_{>0})\tens_A \tF= f$. 
\enprop
\Proof
First note that $\HOM_A(\Ma,A_{\ge m}\Na)$ is of degree $\ge m+\la$
by Proposition \ref{prop:fin}.

\smallskip
Let $\psi\in \HOM_A(\Ma,\Na)$ be a non-zero homomorphism of degree $d$.
Let us take $s\in\Z_{\ge0}$ such that
 $\psi\in \HOM_A(\Ma,A_{\ge s}\Na)$ and
 $\psi\not\in \HOM_A(\Ma,A_{>s}\Na)$.
Then, its image
$\bpsi\in\HOM_A(\Ma,A_{\ge s}\Na/A_{>s}\Na)$ does not vanish.
Since $\HOM_A(\Ma,A_{\ge s}\Na/A_{>s}\Na)\simeq (A_{\ge s}/A_{>s})\tens\HOM_\shc(M,N)$
by \eqref{it:Nflat},
there exists $a\in A_s$ such that $\bpsi=af$.
In particular, $d=s+\la$.
Hence, we have
$\psi\in \HOM_A(\Ma, a\Na+A_{>s}\Na)$.
We have an exact sequence
$$0\To\HOM_{A}(\Ma, a\Na)_d\To \HOM_A(\Ma, a\Na+A_{>s}\Na)_d
\To \HOM_A(\Ma, (a\Na+A_{>s}\Na)/a\Na)_d.$$
Since $(a\Na+A_{>s}\Na)/a\Na\simeq\bl(aA+A_{>s})/aA\br\tens[A]\Na$
and $(aA+A_{>s})/aA\simeq A_{>s}/(aA\cap A_{>s})$ has degree $\ge s+1$,
Proposition~\ref{prop:fin} implies that $\HOM_A(\Ma, (a\Na+A_{>s}\Na)/a\Na)$ has degree $\ge s+1+\la=d+1$.
Hence, $\HOM_A(\Ma, (a\Na+A_{>s}\Na)/a\Na)_d\simeq0$, which implies that
$\HOM_{A}(\Ma, a\Na)_d\simeq \HOM_A(\Ma, a\Na+A_{>s}\Na)_d$.
Hence we have $\psi\in \HOM_{A}(\Ma, a\Na)_d=a\HOM_{A}(\Ma, \Na)_\la$
since $A$ is an integral domain.
Hence there exists $\psi'\in \HOM_{A}(\Ma,\Na)_\la$ such that
$\psi=a\psi'$ and $(A/A_{>0})\tens\psi'=f$.

\medskip
Applying it to  $\psi=F$, there exist $a\in A$ and
$\tF\in \HOM_{A}(\Ma,\Na)_\la$ such that
$F=a\tF$.

It remains to remark
$\HOM_A(\Ma,\Na)_\la\isoto \HOM_\shc (M,N)_\la $.
Hence $\HOM_{A}(\Ma,\Na)$ is  generated by $\tF$ as an $A$-module. Since $\Na$ is $A$-flat,  $a:\Na \to \Na$ is injective for any $a\in A$. Thus $a \tF =0$ implies $a=0$ so that  $\HOM_{A}(\Ma,\Na)$ is free over $A$.
\QED

\Rem In Proposition~\ref{prop:uniquehom}, the condition \eqref{it.int}
is necessary.
Indeed, if we take $\shc=\cor\gmod$,
$A=\cor[z]/\cor[z]z^2$,  $\Ma=Az$ and $\Na=A$.
Then $\HOM_\shc(M,N)\simeq\cor$, 
$\HOM_A(\Ma,\Na)\simeq A/Az$ and $\HOM_A(\Ma,\Na)\to\HOM_{\shc}(M,N)$
vanishes.

Here is another example where $\Ma$ and $\Na$ are $A$-flat.
Let $B=\cor[t]/\cor[t]t^2$ with $\deg(t)=1$ and $\shc=  B\gmod$ and $A=\cor[z]/\cor[z]z^2$.
For $c\in\cor^\times$, let $\Xa_c=\cor[t]/\cor[t]t^2 \in\shc$ where $z\in\END(\Xa_c)$ is given by $\Xa_c \ni x\mapsto ctx \in \Xa_c$. 
Then  $\Xa_c$ is $A$-flat. If $\Ma=\Xa_{c_1}$, $\Na=\Xa_{c_2}$
for $c_1\not=c_2$, then
$\HOM_\shc(M,N)\simeq\cor$, 
$\HOM_A(\Ma,\Na)\simeq A z $ and $\HOM_A(\Ma,\Na)\to\HOM_{\shc}(M,N)$
vanishes.
\enrem 

\Lemma\label{lem:cohcor}
We have equivalences of categories
\eqn
A\gmod&&\isoto\Mod(A,\cor\gmod),\\
\Modgc(A)&&\isoto\Proc(A,\cor\gmod).
\eneqn
\enlemma
\Proof
The first equivalence is obvious.
We define the functor $\Phi\cl \Modgc(A)\to\Proc(A,\cor\gmod)$  by
$M\to\proolim[m]M/M_{\ge m}$,
and $\Psi\cl\Proc(A,\cor\gmod) \to \Modgc(A)$ by
$\Ma\to \HOM_A(\Phi(A),\Ma)$.
It is easy to check that they are well-defined and quasi-inverse to each other.
\QED

\subsection{Flatness}

\Prop\label{prop:platitude}
Let $A$ be a commutative graded ring satisfying \eqref{cond:gring},
let $z\in A_{>0}$ be a non-zero-divisor,
and set $B=A/Az$.
Assume that $\Ma\in \Proc(A,\shc)$ satisfies
\bna
\item
$z\vert_\Ma$ is a monomorphism,
\item $\Ma/z\Ma$ is $B$-flat.
\ee
Then $\Ma$ is $A$-flat.
\enprop
\Proof
It is enough to show that
$\Tor_1^A(S,\Ma)\simeq0$ for any finitely generated $A$-module $S$.

\snoi
(i)\ Assume that $zS=0$.
Then we have
$\Tor_1^A(S,\Ma)\simeq \Tor_1^B(S,\Ma/z\Ma)\simeq0$.  

\snoi
(ii)\ Assume that $z^mS=0$ for some $m\in\Z_{>0}$.
We shall show $\Tor_1^A(S,\Ma)\simeq0$ by induction on $m$.
We have an exact sequence
$0\to\Ker(z\vert_S)\to S\to zS\to0$.
Since $z^{m-1}\vert_{zS}=0$, we have
$\Tor_1^A(zS,\Ma)\simeq0$ by the induction hypothesis, and
$\Tor_1^A(\Ker(z\vert_S),\Ma)\simeq0$ by (i).
Hence the exact sequence 
$$\Tor_1^A(\Ker(z\vert_S),\Ma)\To\Tor_1^A(S,\Ma)\To\Tor_1^A(zS,\Ma)$$
implies $\Tor_1^A(S,\Ma)\simeq0$.

\mnoi
(iii) Assume that $z\vert_{S}$ is a monomorphism.
Then we have an exact sequence
$$\Tor_1^A(S,\Ma)\To [z]\Tor_1^A(S,\Ma)\To\Tor_1^A(S/zS,\Ma).$$
We have $\Tor_1^A(S/zS,\Ma)\simeq 0$ by (i).
Hence
$\Tor_1^A(S,\Ma)/z\Tor_1^A(S,\Ma)\simeq0$. Since
$\Tor_1^A(S,\Ma)\in\Proc(A,\shc)$, 
Proposition~\ref{prop:noether}\;\eqref{item:mic} implies 
$\Tor_1^A(S,\Ma)\simeq0$.

\snoi
(iv)\ In general we have
an exact sequence $0\to S'\to S\to S''\to 0$
such that
$z^mS'=0$ for some $m\in\Z_{>0}$, and $z\vert_{S''}$ is a monomorphism. 
Hence we have $\Tor^A_1(S,\Ma)\simeq 0$ by (ii) and (iii).
\QED

\Cor \label{cor:platitude}Let $z_k$ be a homogeneous indeterminate
with positive degree \ro $k=1,\ldots,n$\rf, and let $A\seteq\cor[z_1,\ldots, z_n]$ be 
the graded polynomial ring.
Assume that $\Ma\in\Proc(A,\shc)$  satisfies
\eqn
\parbox{70ex}{
$(z_1,\ldots,z_n)$ is $\Ma$-regular, i.e.,
for any $k$ such that $1\le k\le n$, $z_k\bigm|_{\Ma/\sum_{j=1}^{k-1} z_j\Ma}$
is a monomorphism.}
\eneqn
Then $\Ma$ is $A$-flat.
\encor
\Proof
This immediately follows from Proposition~\ref{prop:platitude}
by induction on $n$.
\QED

\Lemma \label{lem:trunc}
Let $m\in\Z_{>0}$ and $A=\cor[z]/\cor[z]z^m$,
and let $M$ be a graded $A$-module in an abelian  $\cor$-linear graded category.
Then the following conditions are equivalent:
\bna
\item $M$ is $A$-flat,
\item 
$z\cl z^{k-1}M/z^kM\to z^{k}M/z^{k+1}M$ is an isomorphism for any $k$ such that 
$1\le k<m$,
\item
$z^k\cl M/zM\to z^{k}M/z^{k+1}M$ is an isomorphism for any $k$ such that 
$1\le k<m$,
\item
$\Ker(z^{k}\vert_M)\subset z^{m-k}M$ for any $k$ such that
$1\le k<m$,
\item
$\Ker(z^{k}\vert_M)\subset z^{m-k}M$ for some $k$ such that
$1\le k<m$.
\ee
\enlemma
\Proof
It follows from the fact that any graded ideal of $A$ is $z^kA$ for some $k\in\Z$
with $0\le k\le m$. 
\QED

\Prop\label{prop:flatfin}
Let $A$ be a commutative graded $\cor$-algebra
satisfying \eqref{cond:gring} and
let $\Ma\in\Proc(A,\shc)$.
\bnum
\item If the sequence
$$0\to X'\tensa \Ma \to X\tensa\Ma\to X''\tensa\Ma\to 0$$ is exact for any exact sequence
$0\to X'\to X\to X''\to0$ in $A\gmod$, 
then $\Ma$ is $A$-flat.
\item $\Ma$ is $A$-flat if and only if $\Tor_1^{A}(A/A_{>0},\Ma)\simeq0$.
\ee
\enprop
\Proof
(i)\ 
Let $0\to X'\to X\to X''\to 0$ be an exact sequence
in $\Modgc(A)$.
We shall show that $0\to X'\tensa \Ma\to X\tensa \Ma\to X''\tensa \Ma\to0$ is
exact.

Let $0\to X'_m\to X_m\to X''_m\to 0$ be the exact sequence 
of $A$-modules with finite $\cor$-dimension where
$X_m=X/A_{\ge m}X$, $X''_m=X''/A_{\ge m}X''$ and $X'_m=X'/(X'\cap A_{\ge m}X)$.
By Proposition~\ref{prop:noether}\;\eqref{it:subK}, 
there exists $n\in\Z$ such that
$A_{\ge n}X'\subset X'\cap A_{\ge m}X$, which implies that
$$\proolim[m]X'/A_{\ge m}X'\simeq \proolim[m]X'_m.$$
Hence
$$0\to\proolim[m](X'/A_{\ge m}X')\tens[A]\Ma\to
\proolim[m](X/A_{\ge m}X)\tens[A]\Ma\to
\proolim[m](X''/A_{\ge m}X'')\tens[A]\Ma\to0$$
is exact.
Since $\proolim[m](X/A_{\ge m}X)\tens[A]\Ma\simeq 
\proolim[m]X\tens[A](A/A_{\ge m})\tens[A]\Ma\simeq
X\tens[A]\Ma$, etc.,
we obtain the desired result.

\snoi
(ii)\ Assume that $\Tor_1^A(A/A_{>0},\Ma)\simeq0$.
We shall show that $\Tor_1^A(X,\Ma)\simeq0$ for any
$X\in A\gmod$. There exists $m\in\Z_{\ge0}$ such that $A_{\ge m}X=0$.
We shall argue by induction on $m$.
By the exact sequence
$\Tor_1^A(A_{>0}X,\Ma) \to\Tor_1^A(X,\Ma)\to\Tor_1^A(X/A_{>0}X,\Ma)$,
the left term vanishes by the induction hypothesis,
and the right term vanishes since 
$X/A_{>0}X$ is a direct sum  of copies of $A/A_{>0}$.

Now for any exact sequence $0\to X'\to X\to X''\to 0$ in $A\gmod$,
 we obtain the exact sequence
$$ 0 \simeq\Tor_1^A(X'',\Ma)\To X'\tensa\Ma\To X\tensa\Ma\To X''\tensa\Ma\To0,$$
as desired.\QED

\section{Duality} \label{Sec: duality}

\subsection{Complements}
In this subsection, we give two lemmas.
Note that the graded versions of these lemma still hold,
although we do not repeat them.
\Lemma\label{lem:NMX}
Let $B$ be a commutative noetherian ring and let $\sha$ be an abelian category.
Let $X$ be a $B$-flat module in $\sha$,
and let $M$ and $N$ are finitely generated $B$-modules.
$$\ihom_B(N,M\tens[B]X)\simeq\Hom_B(N,M)\tens[B]X.$$
\enlemma

\Proof
We have a morphism
$$\Hom_B(N,M)\tens[ B]X\to \ihom_B (N,M\tens[B]X).$$
There exists an exact sequence of $B$-modules
$$L\to L'\to N\to 0,$$
where $L$ and $L'$ are finitely generated free $B$-modules.
Then we have an exact sequence
$$0\To\Hom_B(N,M)\To \Hom_B(L',M)\To\Hom_B(L,M).$$

In the commutative diagram with exact rows
$$\xymatrix@C=5ex{
0\ar[r]&\Hom_B(N,M)\tens[B]X\ar[r]\ar[d]&\Hom_B(L',M)\tens[B]X\ar[r]\ar[d]&\Hom_B(L,M)\tens[B]X\ar[d]\\
0\ar[r]&\ihom_B(N,M\tens[B]X)\ar[r]&\ihom_B(L',M\tens[B]X)\ar[r]
&\ihom_B(L,M\tens[B]X),}
$$
the middle and right vertical arrows are isomorphism, and hence
the left one is also an isomorphism.
\QED

\Lemma Let $B$ be a commutative noetherian ring and let $\sha$ and $\sha'$ be abelian categories, 
If\/ $\Dual\cl\sha^\opp\to\sha' $ is an equivalence of categories,
then we have
$$\Dual(\ihom_{B}(M,X))\simeq \Dual X \tens[B]M$$
for any  $X  \in\Mod(B,\sha)$ and a finitely generated $B$-module $M $.
\enlemma
\Proof
For any $Y\in\sha'$, we have
\eqn
\Hom_{\sha'}(\Dual X\tens[B]M,Y)
&&\simeq \Hom_{B}\bl M,\Hom_{\sha'}(\Dual X,Y)\br\\
&&\simeq \Hom_{B}\bl M,\Hom_{\sha}(\Dual^{-1}Y,X)\br\\
&&\simeq \Hom_{\sha}\bl \Dual^{-1}Y,\ihom_B(M,X)\br\\
&&\simeq \Hom_{\sha'}\Bigl( \Dual\bl\ihom_B(M,X)\br,Y\Bigr).
\eneqn
\QED

\subsection{Duality} \label{subSec: duality}

Let $\shc$ and $\shc'$ be $\cor$-linear abelian categories which satisfy 
\eqref{cond:fcat}.
Let $\Dual\cl \shc^\opp\to\shc'$ be  an equivalence of categories.

Let $A$ be a commutative graded $\cor$-algebra which satisfies \eqref{cond:gring}.
For a graded $A$-module $X$, we wet $X^*\seteq\HOM_\cor(X,\cor)$ which is a
graded $A$-module.

\Lemma\label{lem:DXY}
Let $\Ma\in\Modg(A,\Pro(\shc))$ which is $A$-flat,
and let $X$ and $Y$ be finite $\cor$-dimensional graded $A$-modules.
Then, we have
$$
\Dual(X^*\tens[A]\Ma)\tens[A]Y
\simeq \Dual\bl(X\tens[A]Y)^*\tens[A]\Ma\br.
$$
\enlemma
\Proof
We have
$$
\Dual(X^*\tensa \Ma)\tens[A]Y\simeq
\Dual\bl\ihom_A(Y,X^*\tens[A]\Ma)\br
\simeq \Dual\bl\HOM_A(Y,X^*)\tens[A]\Ma\br.$$
It remains to remark that $\HOM_A(Y,X^*)\simeq(X\tens[A]Y)^*$.
\QED

\Def\label{def:duala}
For $\Ma\in\Proc(A,\shc)$, set $\DA(\Ma)\seteq\proolim[m] \Dual\bl(A/A_{\ge m})^*\tens[A]\Ma\br
\in\Modg\bl A,\Pro(\shc')\br$.
Similarly, for $\Na\in \Proc(A,\shc')$, set
$\DmA(\Na)=\proolim[m] \Dual^{-1}\bl (A/A_{\ge m})^*\tens[A]\Na\br
\in\Modg\bl A,\Pro(\shc)\br$.
\edf
\Prop\label{prop:dualflat}
Assume that $\Ma\in\Proc(A,\shc)$ is $A$-flat. Then we have:
\bnum
\item $\DA(\Ma)$ belongs to $\Proc(A,\shc')$,
\item $\DA(\Ma)$ is $A$-flat,
\item
for any finite-$\cor$-dimensional graded $A$-module $X$, we have
$$\DA(\Ma)\tens[A]X\simeq \Dual(X^*\tens[A]\Ma),$$
\label{item:MXY}
\item $\DmA\DA(\Ma)\simeq \Ma$.
\ee
\enprop
\Proof
For any finite $\cor$-dimensional $A$-module $X$,
we have
\eqn\DA(\Ma)\tensa X&&\simeq \proolim[m] \Dual\bl(A/A_{\ge m})^*\tens[A]\Ma\br\tens[A]X\\
&&\simeq
\proolim[m]\Dual\Bigl(\bl(A/A_{\ge m})\tens[A]X\br^*\tens[A]\Ma\Bigr)
\simeq
\Dual(X^*\tens[A]\Ma)
\eneqn
by Lemma~\ref{lem:DXY}.
Hence $\DA(\Ma)\tensa X\in\shc'$ and 
$$\proolim[m](A/A_{\ge m})\tens[A]\DA(\Ma)
\simeq\proolim[m] \Dual\bl(A/A_{\ge m})^*\tens[A]\Ma\br\simeq\DA(\Ma).$$
Hence we have $\DA(\Ma)\in\Proc(A,\shc')$. 
Since the functor 
$A\gmod\to \Proc(A,\shc')$ given by $X\mapsto \DA(\Ma)\tensa X\simeq\Dual(X^*\tens[A]\Ma)$ is exact,
the object $\DA(\Ma)$ is $A$-flat by Proposition~\ref{prop:flatfin}.

Finally we have
\eqn
\DmA\DA(\Ma)
&&\simeq \proolim[m]\Dual^{-1}\bl(A/A_{\ge m})^*\tens[A]\DA(\Ma)\br\\
&&\simeq \proolim[m] \Dual^{-1}\circ \Dual\bl(A/A_{\ge m})\tens[A]\Ma)\br
\simeq \Ma.
\eneqn
\QED

\Def\label{def:aff} Let us denote by $\Aff[A](\shc)$ 
the full subcategory of $\Proc(A,\shc)$ consisting of $A$-flat objects.
\edf
Then  by Proposition \ref{prop:dualflat} the functor
$$\DA\cl \Aff[A](\shc)^\opp\isoto\Aff[A]( \shc')$$
is an equivalence of categories.

\subsection{Affine objects} \label{suSec: affine obj}

\Def \label{Def: affine objects}
Let $z$ be an indeterminate of homogeneous degree $d\in\Z_{>0}$.
An object
of $\Proc(\cor[z],\shc)$ is nothing but a pair $(\Ma,z)$ such that
\bna
\item $\Ma\in\Pro(\shc)$ and $z\in\END_{\Proc(\shc) }(\Ma)_d$,
\item $\Ma/z\Ma\in\shc$,
\item $\Ma\isoto\proolim[n]\Ma/z^{n}\Ma$.
\setcounter{myc}{\value{enumi}}
\ee
If $(\Ma,z)$ satisfies further the following condition

\bna\setcounter{enumi}{\value{myc}}
\item $z\in\END_{\Proc(\shc) }(\Ma)$ is a monomorphism,
\ee
then we say that $(\Ma,z)$ is an {\em \afn} (of $\Ma/z\Ma$). 
\edf
Note that an \afn $(\Ma,z)$ is nothing but a $\cor[z]$-flat object in $\Proc(\cor[z],\shc)$.
We denote by $\Aff(\shc)$ the category of 
\afns.

\Prop\label{prop:fpz}
The full subcategory $\Proc(\cor[z],\shc)$ of $\Mod(\cor[z],\Pro(\shc))$ has
the following properties.
\bnum
\item $\Proc(\cor[z],\shc)$  is stable by taking subquotients and extensions,

\item any object of $\Proc(\cor[z],\shc)$ is noetherian,
\item for any $(\Ma,z)\in\Proc(\cor[z],\shc)$, we have
\bna
\item $z^m\Ma/z^n\Ma\in\shc$ for any $m,n\in\Z$ such that $0\le m\le n$,
\item $\Ker z^n\in\shc$ for any $n\in\Z_{\ge0}$, and the sequence $\st{\Ker(z^n)}_{n\in\Z_{\ge0}}$ is  stationary,
\item for any $\Na\in\Mod(\cor[z],\Pro(\shc))$ such that $\Na\subset\Ma$, there exists $n>0$ such that
$\Na\cap z^n\Ma\subset z\Na$,
\item if $\Ma=z\Ma$, then $\Ma=0$. 
\ee
\ee
\enprop
This is nothing but a special case of Proposition~\ref{prop:noether}.

For $\Ma\in\Proc(\cor[z],\shc)$, we set
$$\Ma_\tor\seteq\bigcup_{n\in\Z_{>0}}\Ker(z^n\vert_\Ma)\subset\Ma\qtq
\Ma_\fl=\Ma/\Ma_\tor.$$
Note that $\Ma_\tor$ is well-defined since $\Ma$ is noetherian,
and $\Ma_\fl$ is an \afn.

\Def \label{Def: truncated aff}
A \emph{truncated \afn} of $M\in \shc$ at $m\in\Z_{>0}$ (of degree $d$) is a
pair $(\Ma,z)$ of an object $\Ma$ of $\shc$
and an endomorphism $z$ of $\Ma$
such that
\bna
\item $z$ is homogeneous of degree $d\in\Z_{>0}$,
\item $z^m=0$,
\item $\Ma/z\Ma\simeq M$,
\item $z\cl z^{k-1}\Ma/z^{k}\Ma\To z^k\Ma/z^{k+1}\Ma$ is an isomorphism
if $1\le k\le m-1$.
\ee
We sometimes say that $\Ma$ is an $m$-truncated \afn.
\edf
Note that an $m$-truncated \afn is nothing but a
$(\cor[z]/\cor[z]z^m)$-flat object
of $\Proc(\cor[z]/\cor[z]z^m,\shc)$
by Lemma \ref{lem:trunc}.

If $(\Ma,z)$ is an \afn, then
$\Ma/z^m\Ma$ is an $m$-truncated \afn.

\Lemma
Let $\Ma$ be an  object of $\Proc(\cor[z],\shc)$ such that $M\seteq\Ma/z\Ma$
is a simple object of $\shc$. Then $(\Ma,z)$ is either an \afn or
a truncated \afn.
\Proof
Let us consider the epimorphism
$$f_k\cl \Ma/z\Ma\epito z^k\Ma/z^{k+1}\Ma$$
given by $z^k$.
If $f_k$ is an isomorphism for every $k\in\Z_{\ge0}$, then
$(\Ma,z)$ is an \afn.
Otherwise there exists $k$ such that $f_k$ is not an isomorphism.
Take the smallest $k$ among such $k$'s.
Then $f_k=0$ and hence $z^k\Ma=z^{k+1}\Ma$, which implies that 
$z^k\Ma=0$.
Moreover $f_j\cl \Ma/z\Ma\to z^j\Ma/z^{j+1}\Ma$ is an isomorphism for $j<k$,
since it is non-zero. 
Hence $(\Ma,z)$ is a
$k$-truncated affine object. 
\QED
\enlemma
\Lemma  \label{lem:endpreaff}
If $(\Ma,z)$ is an \afn of a simple object of $\catC$, then we have
$$\END_{\cor[z]}(\Ma)\simeq\cor[z].$$
\enlemma

\Proof
It immediately follows from Proposition~\ref{prop:uniquehom}.
\QED

\Def \label{Def: Raff}
We denote by $\Rat(\shc)$ the category 
with $\Ob\bl\Proc(\cor[z],\shc)\br$  
as the set of objects and with the morphisms defined as follows. 
For $\Ma,\Na\in\Ob\bl\Rat(\shc)\br$,
\eqn\Hom_{\,\Rat(\shc)}(\Ma,\Na)&&=\indlim_{k\in\Z_{\ge0}}
\Hom_{\,\Proc(\cor[z],\shc)}\bl(z^k\cor[z])\tens[{\cor[z]}]\Ma,\Na\br\\
&&\simeq
\indlim_{k\in\Z_{\ge0}}
\Hom_{\,\Proc(\cor[z],\shc)}(\Ma,\cor[z]z^{-k}\tens_{\cor[z]}\Na).\eneqn
\edf

Note that we have
$$\HOM_{\Rat(\shc)}(\Ma,\Na)\simeq
\cor[z,z^{-1}]\tens_{\cor[z]}\HOM_{\Proc(\cor[z],\shc)}(\Ma,\Na)
\qt{for $\Ma,\Na\in\Proc(\cor[z],\shc)$.}$$
Hence,
any object of $\Rat(\shc)$ is a $\cor[z,z^{-1}]$-module, i.e.,
$z\in\END(\Ma$) is invertible for any $\Ma\in\Rat(\shc)$.
\Lemma
For any $\Na\in \Proc(\cor[z],\shc)$,
$\Na\to\Na_\fl$ is an isomorphism in $\Rat(\shc)$.
\enlemma
\Proof
Since $z^m\Na_{\tor}\simeq0$ for some $m\in\Z_{>0}$, the canonical morphism
$$\indlim_{k\in\Z_{\ge0}}
\Hom_{\Proc(\cor[z],\shc)}(\Ma,\cor[z]z^{-k}\tens_{\cor[z]}\Na)
\to
\indlim_{k\in\Z_{\ge0}}
\Hom_{\Proc(\cor[z],\shc)}(\Ma,\cor[z]z^{-k}\tens_{\cor[z]}\Na_\fl)$$
is an isomorphism
for any $\Ma\in \Proc(\cor[z],\shc)$.
\QED

Hence a canonical functor
$$\Aff(\shc)\to\Rat(\shc)$$
is faithful and essentially surjective (i.e., any object of
$\Rat(\shc)$ is isomorphic to the image of an object of
$\Aff(\shc)$).

Let us denote by $K(\shc)$ the Grothendieck group of $\shc$.
It is a $\Z[q^{\pm1}]$-module, where $q$ acts on $K(\shc)$ by
$[M]\mapsto [qM]$.
We write $K(\shc)\vert_{q=1}$ for $K(\shc)/(q-1)K(\shc)$.

Since the following lemma is classical in other contexts,  we omit
its proof  (cf.\ \cite[Section 2.3]{CG97} ). 

\Lemma\label{lem:ratcl}
Let $\Ma\in \Aff(\shc)$.
\bnum\item 
Then, $[\Ma/z\Ma]\in K(\shc)\vert_{q=1}$
depends only on the isomorphism class of $\Ma$ in $\Rat(\shc)$.
\item Let $\Ma', \Ma,\Ma'' \in \Aff(\shc)$.
If there exists an exact sequence
$0\to \Ma'\to\Ma\to \Ma''\to 0$ in $\Proc(\cor[z],\shc)$, then
we have
$$[\Ma/z\Ma]=[\Ma'/z\Ma']+[\Ma''/z\Ma'']\qt{in $K(\shc)\vert_{q=1}$.}$$ 
\ee
\enlemma

\Lemma
The category $\Rat(\shc)$ satisfies the following properties.
\bnum
\item $\Rat(\shc)$ is abelian,
\item every object of $\Rat(\shc)$ has finite length.
\ee
\enlemma
\Proof
Since (i) is elementary, we omit its proof.

Let us show (ii). Since every object of $\Proc(\cor[z],\shc)$ is noetherian,
every object $\Ma$ of $\Rat(\shc)$ is noetherian.
Let us show that any decreasing sequence $\st{\Na_j}_{j\in\Z_{\ge0}}$ of  subobjects  of $\Ma$
is stationary. It is represented by a decreasing sequence $\st{\Na_j}_{j\in\Z_{\ge0}}$ of subobjects of $\Ma$ in $\Aff(\shc)$.
Hence the decreasing sequence $\ell(\Na_j/z\Na_j)$ is stationary. 
Then our assertion follows from
\eq\parbox{70ex}{Let $\Laa'$, $\Laa\in\Aff(\shc)$
satisfies $\Laa'\subset \Laa$ in $\Proc(\cor[z],\shc)$ and
$\ell(\Laa'/z\Laa')=\ell(\Laa/z\Laa)$,
then $\Laa'\to \Laa$ is an isomorphism in
$\Rat(\shc)$,}
\eneq
which is a consequence of Lemma~\ref{lem:ratcl}.
\QED

\Def
Let $\Ma$ be an \afn.
\bnum
\item
We say that $\Laa\subset\Ma$ is a strict \subafn of an \afn
 if $\Ma/\Laa$ is an \afn.
In other words, $\Laa\cap z\Ma=z\Laa$.
\item We say that $\Ma$ is  a {\em rationally simple} \afn
if 
$\Ma$ is  simple as an object of $\Rat(\shc)$.
\item
Let $\Laa$ be an \afn and let $\Ma\to \Laa$ be an epimorphism in $\Proc(\cor[z],\shc)$.
If $\Laa$ is a head of $\Ma$ in $\Rat(\shc)$,
we say that $\Laa$ is a {\em rational head} of $\Ma$.
\ee
\edf

\Lemma\label{lem:ratsimple}
Let $(\Ma,z)$ be an \afn.
If $\Ma/z\Ma$ is a simple object, then $\Ma$ is 
a rationally simple \afn.
\enlemma
\Proof
Let $\Na$ be a non-zero \afn and let $\Ma\epito\Na$
be an epimorphism in $\Rat(\shc)$. 
We may assume that it is an epimorphism in $\Proc(\cor[z],\shc)$, since any epimorphism in $\Rat(\shc)$ is represented by an epimorphism in $\Proc(\cor[z],\shc)$. 
Then $\Ma/z\Ma\epito\Na/z\Na$ is an epimorphism, and
$\Na/z\Na$
is non-zero by Proposition~\ref{prop:noether}\;\eqref{it:Nakayama}, and hence
it is an isomorphism.
Hence $\Ma\to\Na$ is an isomorphism 
by Lemma~\ref{lem:red}.
\QED
\Rem
The converse of Lemma~\ref{lem:ratsimple} is not true
(see Proposition~\ref{prop:defactor}). 
\enrem

\section{Coherent objects in Monoidal categories} \label{Sec: coh in mon}

\subsection{Graded monoidal categories }

In the sequel, 
let $\shc$ be an abelian $\cor$-linear graded monoidal category which satisfies
\eq&&\left\{\parbox{70ex}{
\bna
\item $\shc$ satisfies \eqref{cond:fcat},
\item $\tens$ is $\cor$-bilinear and bi-exact,
\item the unit object $\one$ is simple; in particular, 
$\END_{\shc}(\one)\simeq\cor$, 
\item $\tens$ commutes with the grading shift functor $q$, \\
i.e., $q(X\tens Y)\simeq (qX)\tens Y\simeq X\tens(qY)$,
\setcounter{mycc}{\value{enumi}}
\ee}\right.
\label{cond:exactmono}
\eneq
For generalities on monoidal categories, we refer the reader 
to \cite[Chapter 4]{KS06}, \cite{EGNO15}. 

By identifying $q$ with the invertible central object $q\one\in\shc$, we have
$qX\simeq q\tens X$.

The category $\Pro(\shc)$ has also a structure of monoidal category in which the tensor product $\tens$ is bi-exact.

Let $A$ be a commutative graded $\cor$-algebra satisfying \eqref{cond:gring}.
The functor $\cor\gmod\ni V\mapsto V\tensc\one\in\shc$ 
extends to a fully faithful monoidal functor
$\Pro(\cor\gmod)\to \Pro(\shc)$.
In the sequel, we regard $\Pro(\cor\gmod)$, as well as $\cor\gmod$, as a full
subcategory of $\Pro(\shc)$.
For example, $A$ will be identified with
$\proolim(A/A_{\ge m})\tensc\one\in\Pro(\shc)$ (cf.\ Lemma~\ref{lem:cohcor}).

For $\Ma,\Na\in \Modg(A, \Pro(\shc))$, define
$\Ma\tensa\Na\in\Modg(A, \Pro(\shc))$ by the universal property:
\eq 
\hs{2ex}&&\ba{l}
\Hom_{\Modg(A, \Pro(\shc))}(\Ma\tensa\Na,\Laa)\\
\hs{4ex}\simeq \{f\in\Hom_{\Pro(\shc)}(\Ma\tens\Na,\Laa)\mid\\
\hs{11ex}(a\id_{\Laa})\circ f=f\circ(a\id_{\Ma}\tens\id_\Na)
=f\circ(\id_\Ma\tens a\id_\Na)
\hs{2ex}\text{for any $a\in A$}\}.
\ea\label{Eq: ten_A}
\eneq
It is well-defined since $A$ is a finitely generated $\cor$-algebra.
Note that 
$\Modg(A, \Pro(\shc))$ is a monoidal category with $\tensa$ 
as its tensor product and $A\in \Modg(A, \Pro(\shc))$ as its unit object.
Note also that there is an epimorphism
$\Ma\tens\Na\epito\Ma\tensa\Na$ in $\Pro(\shc)$.

\Lemma  \label{lem:monproc}
The category $\Proc(A,\shc)$ has a structure of a monoidal category
by $\tensa\;$. The unit object is $A=\proolim[m](A/A_{\ge m})\tensc\one\in \Proc(A,\shc)$.
\enlemma
\begin{proof}
Let $\Ma$, $\Na\in\Proc(A,\shc)$.
For each $m\in \Z_{\ge0}$ we have
\eqn
\dfrac{\Ma \tensa\Na }{A_{\ge m} (\Ma\tensa\Na)} 
\simeq
 \dfrac{\Ma}{A_{\ge m}\Ma} \tensa\dfrac{\Na}{A_{\ge m}\Na} \in \Modg(A,\shc).
\eneqn
In particular,  $(\Ma\tensa \Na)/A_{>0}(\Ma\tensa\Na)\in\shc$.

Since $\proolim[m]$ is exact, we have
$$ \proolim[m] \left (\dfrac{\Ma \tensa\Na }{A_{\ge m}(\Ma\tensa\Na)} \right) \simeq 
\proolim[m] \left (\dfrac{\Ma}{A_{\ge m}\Ma} \tensa\dfrac{\Na}{A_{\ge m}\Na}\right)
\simeq \Ma\tensa\Na.
  $$
  Hence $\Ma\tensa\Na$ belongs to $\Proc(A,\shc)$.
Now it is immediate that $\bl\Proc(A,\shc), \tensa\,\br$
is a monoidal category. 
\end{proof}

Note that $\tensa$ is a right exact bi-functor.
\Lemma \label{lem:finflat}
Assume that $\dim_\cor A<\infty$.
Let $\eps$ denote the projection $A \epito A/A_{\ge0} \simeq \cor$.
\bnum
\item 
\eq A\tens A\tens A_{>0}\To[g_1] A\tens A_{>0}\To[g_0] A\label{eq:AAA}
\eneq
is an exact sequence in $\Modg(A)$. Here 
$$\text{$g_1(a\tens b\tens c)=ab\tens c-a\tens bc$ and
$g_0(a\tens b)=ab$.}$$
\item
For $M,N\in \Modg(A,\shc)$, the sequence 
\eq 
&&A\tens A_{>0}\tens M\tens N\To[f_1(M,N)]
A_{>0}\tens M\tens N\To[f_0(M,N)] M\tens N 
\label{eq:tencom}
\eneq
is a complex, i.e., the composition vanishes. Here $f_1(M,N)$
is given by 
\eqn
&&A\tens A_{>0}\ni a\tens b\mapsto
b\tens a\id_{M}\tens\id_{N}
+(a-\eps(a) ) \tens\id_{M}\tens b\id_{N}-ab\tens\id_{M}\tens \id_{N}\\
&&\hs{45ex}\in \Hom_\shc(M\tens N, A\tens M\tens N),
\eneqn
and $f_0(M,N)$ is given by
$$A\ni a\mapsto a\id_{M}\tens \id_{N}-\id_{M}\tens a\id_{N}
\in\Hom_\shc(M\tens N,M\tens N).$$
\item
If $M$ is $A$-flat, then
\eq A\tens A_{>0}\tens M\To[h_1(M)]
A_{>0}\tens M\To[h_0(M)]M\label{eq:resM}
\eneq
is exact.
Here $h_1(M)$ is given by
$A\tens A_0\ni a\tens b\mapsto b\tens a\id_{M}-ab\tens\id_{M}
\in\HOM_\shc (M,A_{>0}\tens M) $, and $h_0(M)$ is the multiplication.
\item $A_{>0}\tens M\tens N\To[f_0(M,N)] M\tens N\To M\tensa N\To0$ is exact.
\item If $M$ is $A$-flat, then \eqref{eq:tencom} is exact.
\item If $M$ is $A$-flat, then
$M\tensa\scbul\cl \Modg(A,\shc)\to\Modg(A,\shc)$ is
exact.
\ee
\enlemma
\Proof
(i) follows from
$a\tens b=g_1(1\tens a\tens b)+1\tens g_0(a\tens b)$.

\snoi
(ii) is straightforward.

\snoi
(iii) follows form (i) and \eqref{eq:resM}$\simeq$\eqref{eq:AAA}$\tens[A]M$.

\snoi
(iv) follows from the fact that $a\id_M\tens \id_N-\id_M\tens a\id_N=0$ if $a\in A_0$.

\snoi
(v) 
For $M, N\in\Modg(A,\shc)$, set
$S_2(M,N)=A\tens A_{>0}\tens M\tens N$ and
$S_1(M,N)=A_{>0}\tens M\tens N$.

Assume that $M$ is $A$-flat. For any $N\in\Modg(A,\shc)$, there exists $m\in\Z_{>0}$
such that $A_{\ge m}N=0$.
We shall show that \eqref{eq:tencom} is exact by induction on $m$.
Assume that $m=1$. Then we have
$f_k(M,N)=h_k(M)\tens\id_N$ ($k=0,1$).
Hence, \eqref{eq:tencom}$\simeq$\eqref{eq:resM}$\tens N$ is exact.

Now assume that $m>1$.

For an exact sequence
$0\to N'\to N\to N''\to 0$ in $\Modg(A,\shc)$, we have a commutative diagram with exact rows:
\eq
\xymatrix{
0\ar[r]&S_2(M,N') \ar[r] \ar[d]_{f_1(M,N')}&S_2(M,N)
\ar[r] \ar[d]_{f_1(M,N)}&S_2(M,N'')
\ar[d]_{f_1(M,N'')}\ar[r]&0\\
0\ar[r]&S_1(M,N') \ar[r] \ar[d]_{f_0(M,N')}&S_1(M,N)
\ar[r] \ar[d]_{f_0(M,N)}&S_1(M,N'')\ar[r]\ar[d]_{f_0(M,N'')}&0\\
0\ar[r]&M \tens N' \ar[r] \ar[d]& M\tens N\ar[r] \ar[d]& M\tens N''
\ar[r]\ar[d]&0\\
&M\tensa N' \ar[r] \ar[d]& M\tensa N\ar[d]\ar[r] &M\tensa N''\ar[d]\ar[r]  & 0\\
&0 & 0 &0 &
}\label{eq:bigd}
\eneq
Set $N'=A_{>0}N$ and $N''=N/N'$.
Then the left and the right column are exact by the induction hypothesis,
and hence the middle arrow is exact. Thus the induction proceeds.

\mnoi
(vi) Assume that $M$ is $A$-flat.
It is enough to show that $M\tensa N'\to M\tensa N$ is a monomorphism for any exact sequence $0\to N'\to N\to N''\to 0$ in $\Modg(A,\shc)$.
It follows from the fact that
the commutative diagram \eqref{eq:bigd}
has exact rows and columns.
\QED

\Prop\label{prop:tenszexact}
Assume that $\Ma \in \Proc(A,\shc)$ is $A$-flat.
Then the functor
$\Ma\tensa\scbul\cl  \Proc(A,\shc)\to \Proc(A,\shc)$ is exact.
\enprop
\Proof
Let us show that
\eq
0\to \Ma\tensa \Na'\to \Ma\tensa \Na\to \Ma\tensa \Na''\to0
\label{eq:exactMN}
\eneq
is exact if $0\to \Na'\to \Na\to \Na''\to 0$ is an exact sequence
in $\Proc(A,\shc)$.
For $m\in\Z_{>0}$, set
$A(m)=A/A_{\ge m}$, $M_m=\Ma/A_{\ge m}\Ma$,
$N_m=\Na/A_{\ge m}\Na$, $N''_m =\Na''/A_{\ge m}\Na''$
and $N'_m=\Na'/(\Na'\cap A_{\ge m}\Na)$.
Then $\dim A(m)<\infty$ and $M_m\in\Modg(A(m),\shc)$ 
is $A(m)$-flat.
Since $0\to N_m'\to N_m\to N_m''\to 0$ is an exact sequence in
$\Modg(A(m),\shc)$,
the sequence
$$0\to M_m\tens[A(m)] N_m'\to  M_m\tens[A(m)]N_m\to M_m\tens[A(m)] N_m''\to 0$$
is exact by Lemma~\ref{lem:finflat}.
Then taking $\proolim[m]$, 
we conclude that \eqref{eq:exactMN} is exact. 
\QED

\Lemma\label{lem:Affmon}
For $\Ma,\Na\in\Aff[A](\shc)$  {\rm (see Definition~\ref{def:aff})},
we have $\Ma\tensa \Na\in\Aff[A](\shc)$.
Namely $\Aff[A](\shc)$ is a monoidal category.
\enlemma
\Proof
Let $X'\to X\to X''$ be an exact sequence in
$\Modc(A)$.
Then $X'\tensa \Ma\to X\tensa \Ma\to X''\tensa \Ma$ is exact since
$\Ma$ is $A$-flat.
Now, Proposition~\ref{prop:tenszexact} implies that
$X'\tensa \Ma\tensa\Na\to X\tensa \Ma\tensa\Na\to X''\tensa \Ma\tensa\Na$ is exact since $\Na$ is $A$-flat.
\QED

\Prop\label{prop:AAflat}
Let $A'$ be another commutative  graded ring satisfying \eqref{cond:gring},
and let $\Ma\in\Proc(A,\shc)$, $\Ma'\in\Proc(A',\shc)$.
\bnum
\item $A\tensc A'$ satisfies \eqref{cond:gring}.
\item $\Ma\tens\Ma'\in \Proc(A\tensc A',\shc)$. 
\item For $X\in\Modc(A)$, $X'\in\Modc(A')$ and $m\in\Z_{\ge0}$, we have
$$\Tor_m^{A\tensc A'}(X\tensc X',\Ma\tens\Ma')\simeq
\soplus_{a,b\in\Z_{\ge0},\;a+b=m}\Tor_a^A(X,\Ma)\tens
\Tor_b^{A'}(X',\Ma').$$
\item
If $\Ma$ is $A$-flat and  $\Ma'$ is $A'$-flat, then $\Ma\tens \Ma'$ is $(A\tensc A')$-flat.
\ee
\enprop
\Proof
(i), (ii) and (iii) are immediate. 
(iv) follows from Proposition~\ref{prop:flatfin} and
\eqn
&&\Tor_1^{A\tensc A'}\bl (A\tensc A')/(A\tensc A')_{>0},\Ma\tens\Ma'\br\\
&&\hs{10ex}\simeq
\Tor_1^A( A/A_{> 0} ,\Ma)\tens \bl(A'/A'_{>0})\tensa\Ma'\br\\
&&\hs{20ex}\oplus\bl(A/A_{>0})\tensa\Ma\br\tens \Tor_1^{A'}(A'/A'_{>0},\Ma').
\eneqn
\QED

\subsection{Rigid case}

In this subsection, we assume that $\shc$  satisfies \eqref{cond:exactmono}
and also that $\shc$ is a rigid monoidal category, i.e.,
every object of $\shc$ has a right dual and a left dual.

We denote by $\D$ the right duality functor.
Hence its quasi-inverse $\D^{-1}$ is a left duality functor.
Let $$\ev_M\cl M\tens\D M\to \one\qtq
\coev_M\cl \one\to \D M\tens M$$ be the evaluation morphism and the coevaluation morphism, respectively.
Recall that
for $\Ma\in\Proc(A,\shc)$, we define in Definition~\ref{def:duala}
\eq
\ba{rl}
\D_A(\Ma)&\seteq\proolim[m]\D\bl(A/A_{\ge m})^*\tensa\Ma\br,\\
\D^{-1}_A(\Ma)&\seteq\proolim[m]\D^{-1}\bl(A/A_{\ge m})^*\tensa\Ma\br.
\ea
\eneq
If $\Ma\in\Aff[A](\shc)$, then $\D_A(\Ma)$ and
$\D_A^{-1}(\Ma)$ also belong to $\Aff[A](\shc)$.

They are a quasi-inverse to each other.

\Prop \label{Prop:rigid}
If $\shc$ is rigid, then
the monoidal category $\Aff[A](\shc)$ 
is also rigid, 
and $\D_A$ and $\D_A^{-1}$ are a right dual and a left dual, respectively.
\enprop

\Proof
We shall show that $\D_A(\Ma)$ is a right dual of $\Ma\in\Aff[A](\shc)$. 

Let us first construct $\ev\cl\Ma\tensa\D_A(\Ma)\to A\tensc\one$.
For $m\in\Z_{>0}$, we have 
$$\bl(A/A_{\ge m})^*\tensa\Ma\br\tens\D\bl(A/A_{\ge m})^*\tensa\Ma\br
\to\one.$$
Set $L_m=A/A_{\ge m}$.
Then $\st{L_m}_{m\in\Z_{\ge0}}$ is a projective system.
By using $\cor \to L_m\tensc L_m^*$, we obtain 
\eqn
\Ma\tens \D\bl L_m^*\tensa\Ma\br
&&\to (L_m\tensc L_m^*)\tensc\Ma\tens
\D(L_m^*\tensa\Ma)\\
&&\to L_m\tensc\bl(L_m^*\tensa\Ma\br\tens
\D\bl L_m^*\tensa\Ma\br\\
&&\to
 L_m\tensc\one.
\eneqn
Since it is $A$-bilinear, we obtain
$$\Ma\tensa\;\D_A(\Ma)\To \Ma\tensa\; \D\bl L_m^*\tensa\Ma\br\to L_m\tensc\one.$$
Taking $\proolim[m]$, we obtain
$$\ev\cl \Ma\tensa\;\D_A(\Ma)\to A\tensc\one.$$

Now let us construct the coevaluation morphism
$$\coev\cl A\tensc \one\to \D_A(\Ma)\tensa\Ma.$$
For any $m\in\Z_{\ge0}$, we have
$$f_m\cl \one \to \D\bl L_m^*\tensa\Ma\br\tens\bl L_m^*\tensa\Ma\br.$$
It satisfies the relation : $(a\tens 1)\cdot f_m=(1\tens a)\cdot f_m$ for any
$a\in A$.
Hence $f_m$ gives
$$\one \to
\ihom_{A\tensc A}\Bigl(A,\D\bl L_m^*\tensa\Ma\br\tens\bl L_m^*\tensa\Ma\br\Bigr).
$$
Here, we regard $A$ as the $A\tensc A$-module
$(A\tensc A)/\sum_{a\in A}(A\tensc A)(a\tensc 1-1\tensc a)$, and
$\D\bl L_m^*\tensa\Ma\br\tens\bl L_m^*\tensa\Ma\br\in\Modg(A\tensc A,\shc)$.

On the other hand,  we have
\eqn
\D\bl L_m^*\tensa\Ma\br\tens\bl L_m^*\tensa\Ma\br
&&\simeq
\bl L_m\tensa \D_A(\Ma)\br\tens (L_m^*\tensa\Ma)\\
&&\simeq
(L_m\tensc L_m^*)\tens_{A\tensc A}\bl \D_A(\Ma)\tens \Ma \br.
\eneqn
Since $ \D_A(\Ma) \tens \Ma$ is ($A\tensc A$)-flat by Proposition~\ref{prop:AAflat},
Lemma~\ref{lem:NMX} implies that
\eqn
&&\ihom_{A\tensc A}\Bigl(A,\D\bl L_m^*\tensa\Ma\br\tens\bl L_m^*\tensa\Ma\br\Bigr)
\\
&&\hs{10ex}\simeq
\Hom_{A\tensc A}(A,L_m\tensc L_m^*)\tens_{A\tensc A}\bl \D_A(\Ma)\tens\Ma\br.
\eneqn
Since
$\Hom_{A\tensc A}(A,L_m\tensc L_m^*)\simeq\Hom_A(L_m ,L_m)\simeq L_m$, 
we obtain
\eqn
&&\Hom_{A\tensc A}(A,L_m\tensc L_m^*)\tens_{A\tensc A}\bl\D_A(\Ma)\tens\Ma\br\\
&&\hs{10ex}\simeq
L_m\tens_{A\tensc A}\bl\D_A(\Ma)\tens\Ma\br\simeq
 L_m\tensa(\D_A(\Ma)\tensa \Ma).
\eneqn
Thus we obtain
$\one \to  L_m\tensa(\D_A(\Ma)\tensa \Ma)$.
Taking the projective limit $\proolim[m]$, we obtain
$\one\to \D_A(\Ma)\tensa \Ma$ in $\Pro(\shc)$,
which induces a morphism
$$\coev\cl A\tensc\one\to \D_A(\Ma)\tensa \Ma$$ in $\Modg(A,\Pro(\shc))$.

Note that
$\Ma\To[\coev]\Ma\tensa\D_A(\Ma)\tensa \Ma\To[\ev]\Ma$
and $\D_A(\Ma)\To[\coev]\D_A(\Ma)\tensa \Ma\tensa\D_A(\Ma)\To[\ev]\D_A(\Ma)$
are isomorphisms 
 by Lemma~\ref{lem:red}
since
they remain isomorphisms after the operation of
$(A/A_{>0})\tensa\scbul$.
Hence $\D_A(\Ma)$ is a right dual of $\Ma$ (see \cite[Lemma A.2]{K^3}).
\QED

\Lemma\label{lem:tensdual}
For $X\in\shc$ and $\Ma\in\Aff[A](\shc)$, we have
$$
 \D_A^{ \pm1 } (\Ma\tens X)\simeq\bl  \D^{\pm 1} (X)\br\tens\bl \D_A^{\pm 1}(\Ma)\br\qtq
\D_A^{\pm1}(X\tens\Ma)\simeq\bl\D_A^{\pm1}(\Ma)\br\tens\bl\D^{\pm1}(X)\br.$$
\enlemma
\Proof
Let us only show the first isomorphism since the proof of the other is similar.
We have
\eqn\D_A^{\pm}(\Ma\tens X)&&
\simeq\proolim[m]\D^{\pm}\bl(A/A_{\ge m})^*\tensa(\Ma\tens X)\br\\
&&\simeq\proolim[m]\D^{\pm}\Bigl(\bl(A/A_{\ge m})^*\tensa\Ma\br\tens X\Bigr)\\
&&\simeq\proolim[m](\D^{\pm} X)\tens \D^{\pm}\bl(A/A_{\ge m})^*\tensa\Ma\br\\
&&\simeq(\D^{\pm} X)\tens (\D_A^{\pm}\Ma).
\eneqn
\QED

\section{R-matrices} \label{Sec: R-matrices}

\subsection{\Afns}
Let $\shc$ be an abelian graded $\cor$-linear monoidal category which satisfies
\eqref{cond:exactmono}.

Recall that  $\Aff(\shc)$ is a monoidal category with $\tensz$ as a tensor product, where we write
 $\tensz$ for $\tens[{\cor[z]}]$.

\Lemma
Let $(\Ma,\z)$ be an \afn in $\shc$
and let $(\Na,\zN)\in\Proc(\cor[\zN],\shc)$.
Then for any homogeneous polynomial $f(\z,\zN)\in\cor[\z,\zN]$
which is monic in $\z$,
the morphism $f(\z,\zN)\vert_{\Ma\tens\Na}$ is a monomorphism.
\enlemma
\Proof
It is enough to show that
$f(\z,\zN)\vert_{\Ma\tens(\Na/\zN^m\Na)}$ is a monomorphism for any $m\in\Z_{\ge 0}$.
We argue by induction on $m$.
If $m=0$, then it is obvious.
Assume that $m>0$.
Then, in the commutative diagram with exact rows
$$\xymatrix{
0\ar[r]&\Ma\tens (\zN^{m-1}\Na/\zN^{m}\Na)\ar[r]\ar[d]^{f(\z,0)}&\Ma\tens
(\Na/\zN^{m}\Na)\ar[r]\ar[d]^{f(\z,\zN)}
&\Ma\tens(\Na/\zN^{m-1}\Na)\ar[r]\ar[d]^{f(\z,\zN)}&0\\
0\ar[r]&\Ma\tens (\zN^{m-1}\Na/\zN^{m}\Na)\ar[r]&\Ma\tens(\Na/\zN^{m}\Na)\ar[r]
&\Ma\tens(\Na/\zN^{m-1}\Na)\ar[r]&0,}
$$
the left and the right arrows are monomorphisms and so is
the middle.
\QED

\Prop \label{pro:onedimhom}
Let $(\Ma,\z)$ and
$(\Na,\zN)$ be \afns.
Set $M=\Ma/\z\Ma$ and $N=\Na/\zN\Na$.
\bnum 
\item Assume that 
$\HOM_\shc(M\tens N, M\tens N)=\cor\,\id_{M\tens N}$. Then we have
$$\HOM_{\cor[\z,\zN]}(\Ma\tens\Na,\Ma\tens\Na)=\cor[\z,\zN]\id_{\Ma\tens \Na}.$$
\item Assume that $\dim_\cor\HOM_\shc(M\tens N, N\tens M)=1$ and 
$\HOM_{\cor[\z,\zN]}(\Ma\tens\Na,\Na\tens\Ma)\not=0$.
Then there exists $\Rre_{\Ma,\Na}\in\HOM_{\cor[\z,\zN]}(\Ma\tens\Na,\Na\tens \Ma)$
such that
\eq\HOM_{\cor[\z,\zN]}(\Ma\tens\Na,\Na\tens \Ma)
=\cor[\z,\zN]\Rre_{\Ma,\Na}
\eneq
and $\Rre_{\Ma,\Na}\vert_{\z=\zN=0}\in\HOM_\shc(M\tens N, N\tens M)$
does not vanish.
{\em We call $\Rre_{\Ma,\Na}$ the renormalized R-matrix.} 
\ee
\enprop

\Proof 
Note that $\Ma\tens\Na \in \Proc(\cor[\z,\zN],\shc)$. Hence it
 immediately follows from Proposition \ref{prop:uniquehom}.
\QED

\Def \label{def: rmat}
If $M,N\in\shc$ satisfy
$\dim\HOM(M\tens N,N\tens M)=1$, then a non-zero
morphism $\rmat{}\in \HOM(M\tens N,N\tens M)$ is called the R-matrix between $M$ and $N$ and denoted by $\rmat{M,N}$. It is well-defined up to a constant multiple.
\edf

If $\shc$ is a rigid monoidal category, we write
$\Da^{\pm1}$ for $\D^{\pm1}_{\cor[z]}$, where $\D$ is the right dual functor of $\shc$
(see Definition~\ref{def:duala}).
Hence,
Proposition~\ref{Prop:rigid} implies the following proposition.
\Prop
Assume that $\shc$ is rigid.
Then $\Aff(\shc)$ is a rigid monoidal category with
$\Da^{\pm1}$ as their right and left duality functors.
\enprop

\subsection{Rational centers and affinizations}
Let $\shc$ be a $\cor$-linear graded monoidal category which satisfies
the following conditions:
\eq
&&\hs{1ex}\left\{\parbox{70ex}{
$\shc$ satisfies \eqref{cond:exactmono} and the following additional condition:
\bna
\setcounter{enumi}{\value{mycc}}
\item $\shc$ has a decomposition $\shc=\soplus_{\la\in\Idx}\shc_\la$
compatible with a monoidal structure where $\Idx$ is an abelian monoid,
and $\one\in \shc_\la$ with $\la=0$. 
\ee
}\right.\label{cond:g}
\eneq

Let $z$ be a homogeneous indeterminate with degree $d\in\Z_{>0}$.

We have also bi-exact bifunctors:
$$\tens\;\cl \shc\times\Rat(\shc)\to \Rat(\shc)
\qtq \tens\;\cl \Rat(\shc)\times\shc\to \Rat(\shc).$$

\Def
A {\em rational center} in $\shc$ is a triple $(\Ma,\phi,\Rmat_\Ma)$
of $\Ma\in\Aff(\shc)$, an additive map $\phi\cl\Idx\to\Z$ and an isomorphism 
$$\Rmat_\Ma(X)\cl q^{\tens\,\phi(\la)}\tens\Ma\tens X\isoto X\tens\Ma$$
in $\Rat(\shc)$ functorial in $X\in\shc_\la$ such that
$$
\xymatrix@C=15ex
{q^{\tens\,\phi(\la+\mu)}\tens\Ma\tens X\tens Y\ar[r]^{\Rmat_\Ma(X)\tens Y}\ar[dr]_{\Rmat_\Ma(X\tens Y)}&q^{\tens\,\phi(\mu)}\tens X\tens\Ma\tens Y\ar[d]^{X\tens\Rmat_\Ma(Y)}\\
&X\tens Y\tens\Ma}$$
and $$\xymatrix@C=12ex{\Ma\tens\one\ar[r]^{\Rmat_\Ma(\one)}\ar[dr]^-{\sim}&\one\tens \Ma\ar[d]^\bwr\\
&\Ma}
$$
commute in $\Rat(\shc)$ for any $X\in\shc_\la$ and $Y\in\shc_\mu$ ($\la,\mu\in\Idx$).
\edf
Note that the commutativity of the bottom diagram is a consequence of the one of the top.
{\em In the sequel, we neglect grading shifts.}
\Lemma \label{lem:dualAff}
Assume that $\shc$ is a rigid monoidal category.
Let $(\Ma,\Rmat_{\Ma})$ be a rational center.
Then $(\Da^{\pm1}(\Ma),\Rmat_{\Da^{\pm1}(\Ma)})$  is a rational center.
Here,
$\Rmat_{\Da^{\pm1}(\Ma)}(X)\seteq\Da^{\pm1}\bl\Rmat_{\Ma}(\D^{\mp1}(X))\br$. 

\enlemma

Note that
$$\Rmat_{\Ma}(\D^{\mp1}(X))\cl\Ma\tens\D^{\mp1}(X)\isoto\D^{\mp1}(X)\tens\Ma$$
and
$$
\Da^{\pm1}\bl\Rmat_{\Ma}(\D^{\mp1}(X))\br:\xymatrix{\Da^{\pm1}\bl\D^{\mp1}(X)\tens\Ma\br\ar[r]^\sim
\ar[d]^\sim&\Da^{\pm1}\bl\Ma\tens\D^{\mp1}(X)\br\ar[d]^\sim\\
\Da^{\pm1}(\Ma)\tens X\ar[r]^\sim&X\tens\Da^{\pm1}(\Ma),}$$
where the vertical arrows follow from Lemma~\ref{lem:tensdual}. 

\Def\label{def:affinization}
An {\em affinization} $\Ma$ of $M\in\shc$ is an \afn
$(\Ma,\z)$ with a rational center $(\Ma,\Rmat_\Ma)$ and an isomorphism
$\Ma/\z\Ma\simeq M$.
\edf

We sometimes simply write $\Ma$ for affinization if no confusion arises.  
The following lemma is an immediate consequence of Lemma~\ref{lem:ratcl}. 
\Lemma
Let $M$ and $N$ be objects of $\shc$.
Assume that $M$ admits an affinization.
Then we have
$$[M]\cdot[N]\equiv [N]\cdot[M]\quad\bmod (q-1)K(\shc).$$
\enlemma

\Prop 
Let $(\Ma,\Rmat_{\Ma})$ be a rational center in $\shc$, and let $L\in\shc$.
Assume that $\Ma$ and $L$ do not vanish.
Then there exist $m\in\Z$ and  a morphism
$\Rre_{\Ma,L}\cl \Ma\tens L\to L\tens \Ma$ in $\Proc(\cor[z],\shc)$ such that
\bna
\item $\Rre_{\Ma,L}$ is equal to $z^m\Rmat_{\Ma}(L)\cl \Ma\tens L\to L\tens \Ma$ in $\Rat(\shc)$,
\setcounter{myc}{\value{enumi}}
\item $\Rre_{\Ma,L}\vert_{z=0}\cl (\Ma/z\Ma)\tens L\to L\tens (\Ma/z\Ma)$
does not vanish.
\ee
\addtocounter{myc}{1}
Moreover such an integer $m$ and an $\Rre_{\Ma,L}$ are unique.

Similarly,  there exist $m\in\Z$ and  a morphism $\Rre_{L,\Ma}\cl L\tens\Ma\to \Ma\tens L$ in $\Proc(\cor[z],\shc)$ such that  
\bna
\setcounter{enumi}{\value{myc}}
\item $\Rre_{L,\Ma}$ gives $z^m\Rmat_{\Ma}(L)^{-1}\cl L\tens \Ma\to \Ma\tens L$ in $\Rat(\shc)$,
\item $\Rre_{L,\Ma}\vert_{z=0}\cl L\tens (\Ma/z\Ma)\to (\Ma/z\Ma)\tens L$
does not vanish.
\ee

\enprop
\Proof
The assertions  (a) and (b) follow  by taking  $m$  the smallest integer such that
$\Rmat_{\Ma}(L)$ is represented by an $f \in \HOM_{\Aff(\shc)}\bl\Ma\tens L, \cor[z]z^{-m}\tens_{\cor[z]} (L\tens \Ma)\br$.
The proof for (c) and (d) are similar.
\QED

\Th \label{th:ren_r_matrix} 
Let $(\Ma,\Rmat_{\Ma})$ be an affinization of $M\in\shc$, and
let $(\Na,\zN)$ be an \afn of $N\in\shc$.
Assume that
$\dim \HOM_{\shc}(M\tens N,N\tens M)=1$.
Then there exist 
a homogeneous $f(\z,\zN)\in \bl\cor[\z,\z^{-1}]\br[[\zN]]$ and a morphism
$\Rre_{\Ma,\Na}\cl \Ma\tens \Na\to \Na\tens \Ma$ in $\Proc(\cor[\z,\zN],\shc)$
such that 
\bna
\item$\HOM_{\cor[\z,\zN]}(\Ma\tens\Na,\Na\tens \Ma)
=\cor[\z,\zN]\Rre_{\Ma,\Na}$,
\item
as an element of 
$\HOM_{\Rat(\shc)}\bl \Ma\tens(\Na/\zN^k\Na),\;(\Na/\zN^k\Na)\tens\Ma\br$,
we have $\Rre_{\Ma,\Na}\vert_{\Na/\zN^k\Na}=f(\z,\zN)\Rmat_\Ma(\Na/\zN^k\Na)$
 for any $k\in\Z_{>0}$, 
\item $\Rre_{\Ma,\Na}\bigm|_{\z=\zN=0}$ does not vanish,
\item $f(\z,\zN)\vert_{\zN=0}$ is a monomial of $\z$.
\ee
Moreover such $\Rre_{\Ma,\Na}$ and $f(\z,\zN)$ are unique.
\enth
\Proof Take $\rmat{}$ such that $\HOM_{\shc}(M\tens N,N\tens M)=\cor\rmat{}$.
Set $\la=\deg(\rmat{})$.
By replacing $\Rmat_\Ma$ with $c z_\Ma^m \Rmat_\Ma$ for some $m\in\Z$ and $c\in\cor^\times$,
we may assume from the beginning that $\Rmat_\Ma(N)$ is
in $\HOM_{\cor[\z]} (\Ma\tens N,N\tens\Ma)$ and
$\Rmat_\Ma(N)\vert_{\z=0}=\rmat{}$. In particular, $\Rmat_{\Ma}(N)$
has degree $\la$.

Set $d=\deg(\z)$ and $d_\Na=\deg(\zN)$.
Let $C$ be the ring of homogeneous functions 
in $\cor[\z^{-1},\zN]$ with degree $0$.
For $k\in\Z_{\ge0}$, let $C(k)=\cor\cdot (\zN^k/\z^{kd_\Na/d})$ or $0$ according that
$kd_\Na/d$ is an integer or not.
Then we have
$C=\soplus_{k\ge0}C(k)$. Set $C(\le k)=\soplus_{0\le j\le k}C(j)$.

For $m\in\Z_{\ge0}$, set $N_m=\Na/\zN^m\Na\in\Mod(\cor[\zN],\shc)$.
Let us show the following statement by induction on $m$.
\eq
&&\hs{4ex}\parbox{72ex}{
for any $m\in\Z_{\ge1}$, there exists $f_m(\z,\zN)\in C({\le m-1})$
such that $f_m(\z,0)=1$ and
$R_m\seteq f_m(\z,\zN)\Rmat_{\Ma}(N_m)$ 
is a morphism $R_m\cl \Ma\tens N_m\to N_m\tens\Ma$ 
in $\Proc(\cor[\z,\zN],\shc)$.}\label{cond:indh}
\eneq
Since it is trivial for $m=1$, assume that $m>1$.
Then by the induction hypothesis,
we have $f_{m-1}(\z,\zN)$ and $R_{m-1}$ as in \eqref{cond:indh}.
Then $f_{m-1}(\z,\zN)\Rmat_\Ma(N_m)$ is a morphism in $\Rat(\shc)$.
Take the smallest integer $s\ge 0$ which satisfies the following condition:
there exists $a\in C(m-1)$ such that
$g(\z,\zN)\Rmat_\Ma(N_m)\cl \Ma\tens N_m\to N_m\tens \z^{-s}\Ma$ is a morphism in
$\Proc(\cor[\z],\shc)$, where $g(\z,\zN)=f_{m-1}(\z,\zN)+a$.

Then we have a commutative diagram in $\Proc(\cor[\z],\shc)$
$$\xymatrix@C=2.2ex{0\ar[d]&&0\ar[d]&0\ar[d]\\
\Ma\tens\zN^{m-1}N_m\ar[rr]\ar[d]&&\zN^{m-1}N_m\tens \z^{-s}\Ma\ar[d]\ar@{.>}[r]
&\zN^{m-1}N_m\tens (\z^{-s}\Ma/\z^{1-s}\Ma)\ar[d]\\
\Ma\tens N_m\ar[rr]_{g(\z,\zN)\Rmat_\Ma(N_m)}\ar[d]
\ar@{.>}[rrru]|-{\raisebox{-2ex}{$ $}}|(.55)\hole&&N_m\tens \z^{-s}\Ma\ar[d]
\ar@{.>}[r]
&N_m\tens (\z^{-s}\Ma/\z^{1-s}\Ma)\ar[d]\\
\Ma\tens N_{m-1}\ar[rr]_{R_{m-1}}\ar[d]\ar[dr]&&N_{m-1}\tens \z^{-s}\Ma\ar[d]\ar@{.>}[r]
&N_{m-1}\tens (\z^{-s}\Ma/\z^{1-s}\Ma)\ar[d]\\
0&N_{m-1}\tens \Ma\ar[ur]&0&0
}$$\cmtMH{$c\rmat{}$ is erased in the diagram}

Assume that $s>0$.
Then the composition 
$$\Ma\tens N_m\To[{g\,\Rmat_\Ma(N_m)}]
N_m\tens \z^{-s}\Ma\to N_m\tens (\z^{-s}\Ma/\z^{1-s}\Ma)$$
factors through $\zN^{m-1}N_m\tens (\z^{-s}\Ma/\z^{1-s}\Ma)$.
Hence the resulting morphism
$\Ma\tens N_m\to \zN^{m-1}N_m\tens (\z^{-s}\Ma/\z^{1-s}\Ma)$ induces
a non-zero morphism $\vphi\cl M\tens N\to q^{(m-1)d_\Na-ds}N\tens M$ 
of degree $\la$.
Since $\HOM(M\tens N,N\tens M)$ is concentrated in degree $\la$, we have
$s=(m-1)d_\Na/d$.
Take $c\in\cor$ such that $\vphi=c\rmat{}$,
Then $R_m\seteq (g-c\zN^{m-1}/\z^{s})\Rmat_{\Ma}(N_m)$
sends $\Ma\tens N_m$ to $N_m\tens \z^{1-s} \Ma$.
It contradicts the choice of $s$.

Hence we have $s=0$, and the condition \eqref{cond:indh}
is satisfied by setting $f_m=g$.

Since $f_m$ in \eqref{cond:indh}
is unique by Proposition~\ref{prop:base}, we have $f_m\equiv f_{m-1}\bmod \zN^{m-1}$,
and we obtain the desired result.
\QED

\Rem
In Theorem~\ref{th:ren_r_matrix},
 $f(\z,\zN)\in \bl\cor[\z,\z^{-1}]\br[[\zN]]$ cannot be weakened by
 $f(\z,\zN)\in \bl\cor[\z,\z^{-1}]\br[\zN]$.
We shall give two  examples.
\bnum
\item
Let $\g=\cor t$ be the one-dimensional graded Lie algebra with $\deg(t)=1$.
Let $\shc$ be the monoidal category of finite-dimensional graded $\g$-modules.
Take $\Ma=\cor[z]$ with the trivial action of $t$, namely $t\vert_{\Ma}=0$. 
For $X\in\shc$, we define $\Rmat_\Ma(X)\cl\cor[z]\tens X\to X\tens\cor[z]$
by $a\tens b\mapsto \e^{t/z}(b\tens a)$. Then $(\Ma,\Rmat_\Ma)$ is a rational center.  Let $\Na=\cor[t]$ with $\zN=t$.
Then, $f(\z,\zN)=\e^{-t/z}$ and $\Rre_{\Ma,\Na}(a\tens b)=b\tens a$. 
\item
Let $R$ be a quiver Hecke algebra (see \S\,\ref{subsec:QHA}
for the definition and notations).
Let $\shc=R\gmod$, $i\in I$, and $z$ a homogeneous
 indeterminate with degree $(\al_i,\al_i)$,
and $\vphi(w)\in\cor[[w]]$ with $\vphi(0)\in\cor^\times$.
Let $\Ma=\cor[z]\in\Aff(\shc)$.
For $\beta\in\prtl$ with $n=\height{\beta}$ and
$X\in R(\beta)\gmod$, let $\Rmat_\Ma(X)\cl \Ma\tens X\to X\tens\Ma$ be the 
morphism 
$\Ma\tens X\simeq\cor[z]\tensc X\To X\tens\Ma\simeq\cor[z]\tensc X$
in $\Rat(\shc)$ given by
$$\sum_{\nu\in I^\beta}\hs{2ex}\prod_{1\le k\le n\;;\;\nu_k=i}\hs{-2ex}\vphi\bl z^{-1}x_ke(\nu)\br\,.$$
Note that $\Rmat_\Ma(X)\in Z\bl R(\beta)\br[[z^{-1}]]$.
Then $(\Ma,\Rmat_\Ma)$ is a rational center and $\Rre_{\Ma,\Na}=\id_{\cor[z]\tens \Na}$.
\ee
\enrem

When $\deg(\z)=\deg(\zN)$, since $\Rre_{\Ma,\Na}$ commutes with $\zM$ and $\zN$, it induces a morphism in $\Aff(\shc)$
\eq \label{eq: induced_r}
 \Ma \tens_z \Na \To[\bRre_{\Ma,\Na}] \Na\tens_z \Ma\,,
\eneq
which is  denoted  by $\bRre_{\Ma,\Na}$. Here, $z$ acts on $\Ma$ and $\Na$ by $\z$ and $\z[\Na]$, respectively.

\smallskip
Recall that a simple object $M$ in a monoidal abelian category is called \emph{real} if $M\tens M$ is simple. 
\Def 
We say that a simple object $M\in \shc$ is \emph{\afr} if $M$ is real and there is an affinization $(\Ma,z)$ of $M$.
If $\deg(z)=d$, we say that $M$ is \emph{\afr of degree $d$}.
\edf

\Prop \label{prop:nuk=nuk+1} 
Let $(\Ma,z)$ be an affinization
of a real simple $M\in\shc$.
Then, there exists $c\in\cor^\times$ and
$T\in\HOM_{\Pro(\shc)}(\Ma\tens\Ma,\Ma \tens\Ma)$ such that
$$c\Rre_{\Ma,\Ma}-\id_{\Ma\tens \Ma }=(z \tens\id_{\Ma}-\id_{\Ma} \tens z)T.$$
\enprop
\Proof
By Lemma~\ref{lem:Affmon},  $\Ma\tensz \Ma$ is an \afn such that $(\Ma\tensz \Ma)/z(\Ma\tensz \Ma) \simeq M\tens M$.
Since $M\tens M$ is simple,  $\END_{\cor[z]}(\Ma\tensz \Ma) = \cor[z] \id_{\Ma\tensz \Ma}$ by Lemma \ref{lem:endpreaff}.
Note that $\bRre_{\Ma,\Ma}\in \END_{\cor[z]}(\Ma\tensz \Ma)$  so that 
$\bRre_{\Ma,\Ma}=cz^a$ for some $a\in \Z_{\ge 0}$ and $c\in \cor^\times$. By Theorem \ref{th:ren_r_matrix} (c), we have $\bRre_{\Ma,\Ma}\vert_{z=0}  \neq 0$ and hence $a=0$.
Thus $\bRre_{\Ma,\Ma} - c\,\id_{\Ma\tensz \Ma}=0$,
from which the assertion follows.
\QED

\begin{definition}  
Let $M$ and $N$ be simple objects in $\shc$. 
\begin{enumerate}[(i)]
\item
  Assume that $\dim\HOM_\shc(M\tens N,N\tens M)=\dim\HOM_\shc (N\tens M,M\tens N)=1$. 
Then define (see Definition \ref{def: rmat})
\eq
&&\La(M,N)\seteq\deg(\rmat{M,N})\in\Z,\\
&&\de(M,N) \seteq  \dfrac{1}{2} \bl\La(M,N)+\La(N,M)\br\in \dfrac{1}{2}\Z_{\ge 0}. \label{eq:de}
\eneq
\item 
Let $(\Ma, \Rmat_{\Ma})$ and  $(\Na, \Rmat_{\Na})$
be affinization of $M$ and $N$, respectively, and assume that $\dim \END_\shc(M\tens N)=1$.
Then define $\Daf(\Ma,\Na) \in \cor[\z,z_{\Na}]  $ by
\eq
 \Rre_{\Na,\Ma} \circ \Rre_{\Ma,\Na}  = \Daf(\Ma,\Na) \id_{\Ma\tens \Na}. \label{eq:Daf}
\eneq
\ee
Note that  $\Daf(\Ma,\Na)$ exists by Proposition~\ref{pro:onedimhom},
and it is well-defined up to a constant multiple in $\cor^\times$.
\end{definition}

If moreover $\dim\END_\shc(N\tens M)=1$,
since
$ \Rre_{\Ma,\Na} \circ \Rre_{\Na,\Ma} \circ \Rre_{\Ma,\Na}  = \Daf(\Ma,\Na) \Rre_{\Ma,\Na}  = \Daf(\Na,\Ma) \Rre_{\Ma,\Na} $,
we have $ \Daf(\Ma,\Na) = \Daf(\Na,\Ma)$.

 Note that $\de(M,N)$ is an integer in the quiver Hecke algebra case
(cf.\ \cite[Proposition 2.5]{KP18}).
As for criterions for the dimension $1$ of the hom spaces, see 
Proposition~\ref{prop:simplehd} and Corollary~\ref{cor:dimEND} below.

\subsection{Quasi-rigid Axiom}

\Def
Let $\sha$ be a monoidal category.
We say that $\sha$ is a {\em \KO} category if it satisfies:
\bna
\item $\sha$ is abelian and $\tens$ is bi-exact,
\item for any $L,M,N\in\sha$, $X\subset L\tens M$ and
$Y\subset M\tens N$ such that $X\tens N\subset L\tens Y\subset L\tens M\tens N$, there exists $K\subset M$ such that
$X\subset L\tens K$ and $K\tens N\subset Y$,
\item for any $L,M,N\in\sha$, $X\subset M\tens N$ and
$Y\subset L\tens M$ such that $L\tens X\subset Y\tens N$, there exists $K\subset M$ such that
$X\subset K\tens N$ and $L\tens K\subset Y$.
\ee
\edf

\Lemma
Let $\shc$ be a \KO monoidal category such that $\one$ is simple.
Let $M$ and $N$ be objects of $\shc$.
If $M\tens N\simeq0$, then $M\simeq0$ or $N\simeq0$.
\enlemma
\Proof
Assume that $M\tens N\simeq0$.
Set $X\seteq M\tens\one\subset M\tens\one$ and $Y\seteq0\subset\one\tens N$.
Then we have $X\tens N\subset M\tens Y\subset M\tens \one \tens N$.
Hence there exists
$K\subset \one$ such that
$X\subset M\tens K$ and $K\tens N\subset Y$.
Since $\one$ is simple, $K\simeq0$ or $K\simeq\one$.
If $K\simeq0$, then $X\subset M\tens K$ implies $M\simeq0$.
If $K\simeq \one$, then $K\tens N\subset Y$ implies $N\simeq0$.
\QED

\Lemma
An abelian rigid monoidal category with bi-exact tensor product is \KO.
\enlemma
\Proof
Since it is a well-known result and the proof is similar to the one in Lemma \ref{lem:K3OAff} below, we omit its proof.
\QED

\Conj \label{Conj: KO}
A \KO monoidal category which satisfies \eqref{cond:exactmono}
is embedded into a rigid monoidal category.
\enconj

Recall that a simple object $M$ in an abelian monoidal category is called \emph{real} if $M\tens M$ is simple.
In this subsection, we keep the assumption that
$\shc$ is a monoidal abelian category satisfying
\eqref{cond:g}.

Recall that, for $M,N\in\shc$, we denote by
$M\hconv N$ the head of $M\tens N$ and
$M\sconv N$ the socle of $M\tens N$.

\Prop \label{prop:simplehd}
Assume that $\shc$ is \KO.
Let $M$ be an \afr object of $\shc$ and
$N$ a simple object of $\shc$.
Then,
$M\tens N$ and $N\tens M$ have simple heads and simple socles.
Moreover,
$\dim\HOM_\shc(M\tens N,N\tens M)=\dim\HOM_\shc(N\tens M,M\tens N)=1$ and
$$M\hconv N\simeq\Im(\rmat{M,N})\simeq N\sconv M\qtq
N\hconv M\simeq\Im(\rmat{N,M})\simeq M\sconv N$$
up to grading shifts. 
\enprop
Recall that $\rmat{M,N}$ is an R-matrix
between $M$ and $N$, i.e., a generator of $\HOM_\shc(M\tens N,N\tens M)$. 

\Proof Since the same proofs in
\cite[Theorem 3.2]{KKKO15} and 
\cite[Proposition 3.2.9]{KKKO18} work, we omit the proof.
\QED

\Prop\label{prop:La}
Let $M$ be an \afr object of $\shc$ and
$N$ be a simple object of $\shc$.
Then we have
\bnum
\item
$\La(M,M\htens N)=\La(M,N)$ and $\La(N\htens M,M)=\La(N,M)$,
\item for any simple subquotient $S$ of 
the radical $\Ker(M\tens N\to M\htens N)$,
we have $\La(M,S)<\La(M,N)$,\label{item:srad}
\item for any simple subquotient $S$ of $(M\tens N)/(M\stens N)$,
we have $\La(S,M)<\La(N,M)$,
\item for any simple subquotient $S$ of 
the radical $\Ker(N\tens M\to N\htens M)$,
we have $\La(S,M)<\La(N,M)$,
\item for any simple subquotient $S$ of $(N\tens M)/(N\stens M)$,
we have $\La(M,S)<\La(M,N)$.
\ee
In particular,
$M\htens N$ appears in the composition series of $M\tens N$
only once \ro up to a grading\rf.
\enprop
\Proof
The proof is similar to \cite[Theorem 4.1]{KKKO18}.
\QED

\Cor\label{cor:dimEND}
Assume that $M\in\shc$ is \afr and $N\in\shc$ is simple.
Then we have $\END_\shc(M\tens N)=\cor\id_{M\tens N}$
and $\END_\shc(N\tens M)=\cor\id_{N\tens M}$.
\encor
\Proof
We shall only prove $\END_\shc (M\tens N)=\cor\id_{M\tens N}$, since the other assertion can be similarly proved.
Let $f\in \END_\shc(M\tens N)$.
Since $M\tens N$ has a simple head, the composition $M\tens N\To[f] M\tens N\to M\htens N$
factors through $M\htens N$.
Then the resulting endomorphism of $M\htens N$ should be $c\id_{M\htens N}$ for some $c\in\cor$.
Replacing $f$ with $f-c\id_{M\tens N}$, we may assume that
$f(M\tens N)\subset\Ker(M\tens N\to M\htens N)$.
If $f(M\tens N)$ is non-zero, then a simple quotient $S$ of
$f(M\tens N)$ should  be isomorphic to $M\htens N$.
Hence $\La(M,S)=\La(M,M\htens N)=\La(M,N)$, which contradicts Proposition~\ref{prop:La}\;\eqref{item:srad}.
\QED
\Lemma
Let $\shc$ be a \KO monoidal category.
Let $(\Ma,\Rmat_{\Ma})$ be an affinization of a real simple $M\in\shc$, and
let $(\Na,\zN)$ be an \afn of a simple $N\in\shc$.
Then there exist 
a homogeneous $f(\z,\zN), g(\z,\zN)\in \bl\cor[\z,\z^{-1}]\br[[\zN]]$ 
and morphisms
$\Rre_{\Ma,\Na}\cl \Ma\tens \Na\to \Na\tens \Ma$ in $\Proc(\cor[\z,\zN],\shc)$
and
$\tRre_{\Na,\Ma}\cl \Na\tens \Ma\to \Ma\tens \Na$ in $\Proc(\cor[\z,\zN],\shc)$
such that 
\bna
\item $f(\z,\zN)\vert_{\zN=0}$ and $g(\z,\zN)\vert_{\zN=0}$ are monomials of $\z$,
\item for any $k\in\Z_{>0}$, we have
$$\Rre_{\Ma,\Na}\vert_{\Na/\zN^k\Na}=f(\z,\zN)\Rmat_\Ma(\Na/\zN^k\Na)
\ \text{and}\ 
\tRre_{\Na,\Ma}\vert_{\Na/\zN^k\Na}=g(\z,\zN)\Rmat_\Ma(\Na/\zN^k\Na)^{-1},$$
\item $\Rre_{\Ma,\Na}\vert_{\z=\z[\Na]=0}\in \HOM_\shc (M\tens N,N\tens M)$
and $\tRre_{\Na,\Ma}\vert_{\z=\z[\Na]=0}\in \HOM_\shc(N\tens M,M\tens N)$
do not vanish.
\ee
\enlemma
\begin{proof}
By Proposition \ref{prop:simplehd} we have
$\dim \HOM_\shc(M\tens N, N\tens M) =  1$, 
Hence
by Theorem \ref{th:ren_r_matrix}  there exist $\Rre_{\Ma,\Na}$  and $f(z_\Ma,z_\Na)$ satisfying the properties in (a) and (b). 
The proof of the statement on
$\tRre_{\Na,\Ma}$ is similar.
\end{proof}

\Lemma
\label{lem:YB}
  Assume that $\shc$ is a \KO monoidal category.
Let $(\Laa,\Rmat_\Laa)$, $(\Ma,\Rmat_\Ma)$ and $(\Na,\Rmat_\Na)$
be affinizations of real simple objects in $\shc$.
Then the Yang-Baxter equation holds 
for $(\Rre_{\Laa,\Ma},\Rre_{\Ma,\Na},\Rre_{\Laa,\Na})$, namely,
the following diagram in
$\Proc(\cor[\z[\Laa],\z,\z[\Na]],\shc)$ commutes {\rm:}
$$\xymatrix@R=3ex{&\Laa\tens\Ma\tens\Na\ar[ld]_{\Rre_{\Laa,\Ma}}
\ar[rd]^{\Rre_{\Ma,\Na}}\\
\Ma\tens\Laa\tens \Na\ar[d]_{\Rre_{\Laa,\Na}}&&\Laa\tens\Na\tens\Ma
\ar[d]^{\Rre_{\Laa,\Na}}\\
\Ma\tens\Na\tens\Laa\ar[rd]_{\Rre_{\Ma,\Na}}&&\Na\tens\Laa\tens\Ma
\ar[ld]^{\Rre_{\Laa,\Ma}}\\
&\Na\tens\Ma\tens\Laa\;.
}$$
\enlemma
\Proof
By Proposition \ref{prop:simplehd} we have
$$\dim \HOM_\shc(L\tens M, M\tens L) =  \dim \HOM_\shc(L\tens N, N\tens L) =\dim \HOM_\shc(M\tens N, N\tens M) =  1.$$
Then by Theorem \ref{th:ren_r_matrix}, we have non-zero morphisms 
$\Rre_{\Laa,\Ma}$, $\Rre_{\Ma,\Na}$, and $\Rre_{\Laa,\Na}$. 
Since the Yang-Baxter equation holds for the triple
$(\Rmat_{\Laa}(\Ma),\Rmat_{\Ma}(\Na),\Rmat_{\Laa}(\Na))$ and they are proportional to $\Rre_{\Laa,\Ma}$, $\Rre_{\Ma,\Na}$, $\Rre_{\Laa,\Na}$,  the assertion follows.
\QED

The following two statements are an analogue of results of \cite{KKKO15}
in the affinization case.
\Lemma \label{lem:K3OAff}
Assume that  $\shc$ is an abelian rigid monoidal category 
with \eqref{cond:exactmono}.
Let $(\Laa,z),(\Ma,z),(\Na,z)$ be \afns in $\shc$.
Let $X\subset \Laa \tensz \Ma$ and $Y\subset\Ma \tensz \Na$
be \afns.
If $X\tensz \Na\subset\Laa \tensz Y$, then there exists
an \afn $Z\subset\Ma$ such that
$$X\subset \Laa\tensz  Z\qtq Z\tensz \Na\subset Y.$$
If we further assume that $Y$ is a strict \subafn of $\Ma \tensz  \Na$, then we may assume that $Z$ is a strict \subafn of $\Ma$.
\enlemma
\Proof
Let $Z$ be an object in $\Proc(\cor[z],\shc)$ such that the diagram
\eqn
\xymatrix{
Z\akete \ar[r] \ar@{>->}[d]& Y \tensz  \lDa(\Na)\akete[-1.2ex] \ar@{>->}[d]
\ar@{}[ld]|(.5)\square  \\
\Ma \ar[r] & \Ma \tensz  \Na \tensz  \lDa(\Na)
}
\eneqn
is a cartesian square in $\Proc(\cor[z],\shc)$.
By Proposition~\ref{prop:dualflat} and Proposition~\ref{prop:tenszexact}, $\bullet \tensz   \lDa(\Na)$ is exact and hence  the right vertical arrow is a monomorphism,
which implies that the the left vertical arrow is also a monomorphism.
Hence $Z$ is an \afn.

Since $Z \tensz  \Na \monoto \Ma\tensz \Na$ is decomposed into $Z\tensz  \Na \to Y \to \Ma\tensz \Na$, we get $Z\tensz  \Na \subset Y $.

By applying $\Laa\tensz  \bullet$, which is exact  by Proposition~\ref{prop:tenszexact},  to the above square, we obtain the following commutative diagram in which the bottom square is cartesian:
\eqn
\xymatrix{
X\akete[-3ex]\ar[r] \ar@{ >->}[ddr] \ar@{>-->}[dr] &X\tensz \Na \tensz  \lDa(\Na)\ar[dr]  & \\
&\Laa\tensz  Z \ar[r] \ar[d] & \Laa\tensz  Y \tensz  \lDa(\Na)\akete[-2ex] \ar@{>->}[d]\ar@{}[dl]_(.6){\square} \\
&\Laa\tensz  \Ma \ar[r] & \Laa\tensz   \Ma \tensz  \Na \tensz  \lDa(\Na).
}
\eneqn
Hence we have $X\monoto\Laa\tensz  Z$ in $\Aff(\shc)$, as desired.

Assume further that $Y$ is a strict \subafn of $\Ma \tensz  \Na$.
Take $Z'\seteq\Ker(\Ma \to (\Ma/Z)_\fl)$. Then $Z \subset Z' \subset \Ma$ and $\Ma/Z'\simeq (\Ma/Z)_\fl \in \Aff(\shc)$. Hence $X\subset \Laa  \tensz   Z'$ and $Z'$ is a strict \subafn of $\Ma$.
We have a morphism
$$\bl ( \Ma\tensz \Na)/(Z\tensz\Na)\br_\fl\to \bl(\Ma\tensz\Na)/Y\br_\fl.$$
Since the left hand side is
$(\Ma\tensz\Na  )/(Z'\tensz\Na)$ and the right hand side is $(\Ma\tensz\Na)/Y$,
we obtain $Z'\tensz\Na\subset Y$.
\QED

\Prop\label{prop:simplehead}
Assume that  $\shc$ is an abelian rigid monoidal category
with \eqref{cond:g}.
Let $(\Ma,z)$ be an affinization and let $(\Na,z)$ be an \afn.
Assume that $\Ma/z\Ma$ is real simple and $\Na/z\Na$ is simple.
Let $\bRre_{\Ma,\Na}\cl \Ma\tensz \Na\to \Na \tensz \Ma$ be
the renormalized $R$-matrix in \eqref{eq: induced_r}.
Then $\Im\bl\bRre_{\Ma,\Na}\br$ is a unique rationally simple quotient \afn of 
$\Ma\tensz \Na$.
\enprop

\Proof
Let $K\subset \Ma \tensz \Na$ be a strict \subafn
such that $K\not=\Ma \tensz  \Na$. It is enough to show that
$\bRre_{\Ma,\Na}(K)=0$.
Indeed, then $\Ker\bl\bRre_{\Ma,\Na}\br$ is a unique maximal strict \subafn of $\Ma \tensz \Na$, and it remains to remark that the quotient of an affine object by a maximal strict  \subafn is rationally simple.

Consider the following commutative diagram
\eqn
\xymatrix@C=7ex{
\Ma\tensz  K\akete[-1.5ex]\ar[rr]\ar@{>->}[d]&&K\tensz \Ma\akete[-1.5ex]\ar@{>->}[d]\\
\Ma\tensz \Ma\tensz \Na\ar[r]_{\bRre_{\Ma,\Ma}}
&\Ma\tensz \Ma\tensz \Na\ar[r]_{\bRre_{\Ma,\Na}}
&\Ma\tensz \Na\tensz \Ma.}
\eneqn
Since $\bRre_{\Ma,\Ma}=\id_{\Ma\tensz\Ma}$ up to a constant multiple by 
Proposition~\ref{prop:uniquehom}, we have
$\Ma\tensz  \bRre_{\Ma,\Na}(K)\subset K\tensz \Ma$.
Hence, Lemma~\ref{lem:K3OAff} implies that there exists a strict \subafn $Z$ of $\Na$ such that
$\bRre_{\Ma,\Na}(K)\subset Z\tensz \Ma$
and $\Ma\tensz Z\subset K$.
Since $K\not=\Ma\tensz \Na$, we have
$Z\not=\Na$. Since $\Na$ is rationally simple, we have $Z=0$.
Hence $\bRre_{\Ma,\Na}(K)=0$.
\QED

\Def
Assume that $(\Ma,z)$ and $(\Na,z)$ be affinizations of
real simple objects. If $\Im(\bRre_{\Ma,\Na})$ is an affinization of 
$(\Ma/z\Ma)\hconv(\Na/z\Na)$, we denote $\Im(\bRre_{\Ma,\Na})$ by
$\Ma\hconv_z\Na$.
\edf

\Lemma \label{lem:CVB}
Assume that  $\shc$ is an abelian rigid monoidal category 
with \eqref{cond:g}.
Let $(\Ma,z)$ and $(\Na,z)$ be affinizations of real simple
modules $M$ and $N$, respectively, and let
$(\Laa,z)$ be an affinization of $M\hconv N$.
Assume that
there is an epimorphism
$$\Ma\tensz  \Na\epito\Laa$$
in $\Proc(\cor[z],\shc)$.
Then, we have $\Im\bl\bRre_{\Ma,\Na}\br\simeq\Laa$.
\enlemma
\Proof
It follows from Proposition~\ref{prop:simplehead}.
\QED

\Prop
Assume that $\shc$ is an abelian rigid monoidal category 
with \eqref{cond:g}.
Let $(\Ma,z)$ be an affinization of a real simple object $M$.
Then $\Ma\tensz\Da(\Ma)\to\cor[z]$ is an epimorphism 
in $\Proc(\cor[z],\shc)$. \enprop
\Proof
It follows from Proposition~\ref{prop:noether}\;\eqref{it:Nakayama} and 
the fact that
$M\tens\D M\to\one$ is an epimorphism.
\QED

\Prop\label{prop:defactor}
Assume that $\shc$ is an abelian rigid monoidal category 
with \eqref{cond:g}.
Let $(\Ma,z)$, $(\Na,z)$ and $(\Laa,z)$ be an affinization with $\deg(z)=d$ of 
a real simple $M$, $N$  and $L$ in $\shc$, respectively.
Assume that 
\bna
\item $\de(M,N)>0$,
\item there exists an epimorphism 
$\Ma\tensz \Na\epito\Laa$  in $\Aff(\shc)$.\label{it:reg}
\ee

Then we have
\bnum
\item
$\Im(\bRre_{\Ma,\Na})\simeq\Laa$,
\item
$\Daf(\Ma,\Na)\in\cor[\z,z_{\Na}](\z-z_{\Na})$. 
Here $\z=z\vert_\Ma$ and $\zN=z\vert_{\Na}$.
\ee
\enprop
\Proof
\ (i) follows from Lemma~\ref{lem:CVB}.

\snoi
(ii)\ Set $z=\z$, $w=\zN$ and $f(z,w)=\Daf(\Ma,\Na)$. 
Assume that  $f(z,w)$ is a homogeneous function of degree $r$ in $z,w$
(counting the degrees of $z,w$ as one),
i.e., $\de(M,N)=dr/2$.
Then we have
$\Rre_{\Ma,\Na}\cl\Ma\tens \Na\to\Na\tens \Ma$
and
$\Rre_{\Na,\Ma}\cl\Na\tens \Ma\to\Ma\tens\Na$
such that
$$\Rre_{\Na,\Ma}\circ\Rre_{\Ma,\Na}=f(z,w)\id_{\Ma\tens\Na}.$$
Hence, the composition
$$\Ma\tensz\Na\To[\Rre_{\Ma,\Na}\vert_{z=w}]\Na\tensz\Ma
\To[\Rre_{\Na,\Ma}\vert_{z=w}]\Ma\tensz \Na$$
is $z^rf(1,1)$.
Hence if $f(1,1)\not=0$, then
$\Ma\tensz\Na\to\Na\tensz\Ma$
is a monomorphism, and hence $\Ma\tensz\Na\to\Laa$ is an isomorphism,
which implies that $M\tens N\simeq L$ is a simple object.
Hence $M\tens N\simeq N\tens M$ and
it contradicts $\de(M,N)>0$. 
Thus we obtain $f(1,1)=0$, which implies that $z-w$ is a factor of $f(z,w)$.
\QED

\Lemma\label{lem:LaMDN}
Assume that $\shc$ is an abelian rigid monoidal category 
with \eqref{cond:g}.
Let $M,N$ be simple objects in $\shc$ such that one of them is \afr. Then
$$\La(M,\D N) = \La(N,M).$$
\enlemma
\begin{proof}
This immediately follows from
$$\HOM(N\tens M,M\tens N)\simeq\HOM(M\tens\D N,\D N\tens M).$$
\end{proof}

\section{Quiver Hecke algebras and Schur-Weyl duality functors}
\label{Sec:QHSW}

\subsection{Quiver Hecke algebras}\label{subsec:QHA}

Let $\bR$ be a field and let $\cmA$ be a  symmetrizable generalized Cartan matrix. We fix a Cartan datum $ \bl\cmA,\wlP,\Pi,\Pi^\vee,(\cdot,\cdot) \br $ consisting of $\cartan$ called
a generalized Cartan matrix, $\wlP$ a free abelian group called the weight lattice, $\Pi = \{ \alpha_i \mid i\in I \} \subset \wlP$ called the set of simple roots, $\Pi^{\vee} = \{ h_i \mid i\in I \} \subset \wlP^{\vee}\seteq\Hom( \wlP, \Z )$ 
called the set of simple coroots, and $(\cdot,\cdot)$ a $\Q$-valued 
symmetric bilinear form on $\wlP$ satisfying the following conditions:
\begin{enumerate} [{\rm (a)}]
\item $\cmA = (\langle h_i,\alpha_j\rangle)_{i,j\in I}$,
\item  $(\alpha_i,\alpha_i)\in 2\Z_{>0}$ for $i\in I$,
\item $\langle h_i, \lambda \rangle =\dfrac{2(\alpha_i,\lambda)}{(\alpha_i,\alpha_i)}$ for $i\in I$ and $\lambda \in \Po$,
\item for each $i\in I$, there exists $\Lambda_i \in \wlP$
such that $\langle h_j, \Lambda_i \rangle = \delta_{ij}$ for any $j\in I$.
\end{enumerate}

We denote by $\g\seteq\g(\cmA)$ the corresponding symmetrizable Kac-Moody algebra and 
set $\pwtl\seteq\st{\la\in\wlP\mid 
\text{$\ang{h_i,\la}\ge0$ for any $i\in I$}}$ 
the set of dominant integral weights.

 For $i,j\in I$, we choose polynomials
$\qQ_{i,j}(u,v) \in \bR[u,v]$ of the form
\begin{align}
\qQ_{i,j}(u,v) =\bc
                   \sum\limits
_{p(\alpha_i , \alpha_i) + q(\alpha_j , \alpha_j) = -2(\alpha_i , \alpha_j) } t_{i,j;p,q} u^pv^q &
\text{if $i \ne j$,}\\[3ex]
0 & \text{if $i=j$,}
\ec\label{eq:Q}
\end{align}
such that $t_{i,j; -\lan h_i,\al_j \ran,0} \in  \bR^{\times}$ and
$$\qQ_{i,j}(u,v)= \qQ_{j,i}(v,u) \quad \text{for all} \ i,j\in I.$$

Let $\rlQ\seteq\soplus_{i\in I} \Z\al_i$
and $\rlQ_+\seteq\soplus_{i\in I} \Z_{\ge0} \al_i$ be the root lattice of $\g$
and the positive root lattice, respectively.
For $\beta\in \rlQ_+$, set
$$
I^\beta\seteq  \Bigl\{\nu=(\nu_1, \ldots, \nu_n ) \in I^n \bigm| \sum_{k=1}^n\alpha_{\nu_k} = \beta \Bigr\}.
$$
The symmetric group $\mathfrak{S}_n = \langle s_k \mid k=1, \ldots, n-1 \rangle$ acts  by place permutations on $I^\beta$.

For $\beta=\sum_{i\in I} b_i \al_i\in\rtl$, set $\height{\beta}=\sum_{i\in I} |b_i|$.
\begin{df}
\ For $\beta\in\rlQ_+$ with $\height{\beta}=n$,
the {\em quiver Hecke algebra} $R(\beta)$ associated with $ \bl\cmA,\Pi,\wlP,\Pi^\vee,(\cdot,\cdot) \br $ and $(\qQ_{i,j}(u,v))_{i,j\in I}$
is the $\bR$-algebra generated by
$$
\{e(\nu) \mid \nu \in I^\beta \}, \; \{x_k \mid 1 \le k \le n \},
 \; \{\tau_l \mid 1 \le l \le n-1 \}
$$
satisfying the following defining relations:
\eqn
&& e(\nu) e(\nu') = \delta_{\nu,\nu'} e(\nu),\ \sum_{\nu \in I^{\beta}} e(\nu)=1,\
x_k e(\nu) =  e(\nu) x_k, \  x_k x_l = x_l x_k,\\
&& \tau_l e(\nu) = e(s_l(\nu)) \tau_l,\  \tau_k \tau_l = \tau_l \tau_k \text{ if } |k - l| > 1, \\[1ex]
&&  \tau_k^2 = \sum_{\nu\in I^\beta}\qQ_{\nu_k, \nu_{k+1}}(x_k, x_{k+1})e(\nu), \\[5pt]
&& \tau_k x_l - x_{s_k(l)} \tau_k =
\bl\delta(l=k+1)-\delta(l=k)\br
\sum_{\nu\in I^\beta,\ \nu_k=\nu_{k+1}}e(\nu),\\
&&\tau_{k+1} \tau_{k} \tau_{k+1} - \tau_{k} \tau_{k+1} \tau_{k}
=\sum_{\nu\in I^\beta,\ \nu_k=\nu_{k+2}}
\bQ_{\,\nu_k,\nu_{k+1}}(x_k,x_{k+1},x_{k+2}) e(\nu),
\eneqn
\end{df}
where
\begin{align*}
\bQ_{i,j}(u,v,w)\seteq\dfrac{ \qQ_{i,j}(u,v)- \qQ_{i,j}(w,v)}{u-w}\in \bR[u,v,w].
\end{align*}

Now, let $\dg\cl \rtl\times\rtl\to\Z$ be a bilinear form
such that 
\eq
\dg(\al,\beta)+\dg(\beta,\al)=-2(\al,\beta)\qt{for any $\al$, $\beta\in\rtl$. }
\eneq
Then, the algebra $R(\beta)$ is $\Z$-graded by
\eq
&&\deg(e(\nu))=0, \quad \deg(x_k e(\nu))= ( \alpha_{\nu_k} ,\alpha_{\nu_k}), \quad  \deg(\tau_l e(\nu))= \dg(\alpha_{\nu_{l}} , \alpha_{\nu_{l+1}}).
\label{eq:grading}
\eneq
We denote by $R_\dg(\beta)$ the graded algebra $R(\beta)$ with the grading 
\eqref{eq:grading}. 
We also regard $R(\beta)$ as a graded algebra by taking
$\dg=-(\;\cdot\;,\;\cdot\;)$\,.

We denote by $\Modg(R_\dg(\beta))$ the graded  abelian category of
graded $R_\dg(\beta)$-modules
and we set $\Modg(R_\dg)=\soplus_{\beta\in\prtl}\Modg(R_\dg(\beta))$.

For $M \in \Modg(R_\dg(\beta))$, we set $\wt(M)\seteq-\beta$. 

We denote by $\Modgc(R_\dg(\beta))$ the full subcategory of $\Modg(R_\dg(\beta))$
consisting of finitely generated graded $R_\dg(\beta)$-modules and
we set $\Modgc(R_\dg)=\soplus_{\beta\in\prtl}\Modgc(R_\dg(\beta))$.

We denote by $R_\dg(\beta)\gmod$ the  the full subcategory of $\Modg(R_\dg(\beta))$ consisting of $\Z$-graded $R_\dg(\beta)$-modules
with finite $\cor$-dimension. Set $R_\dg\gmod \seteq \soplus_{\beta\in\prtl} R_\dg(\beta)\gmod$.
 
Note that $R_\dg(\beta)\gmod$ is independent from $\dg$ as a graded category 
after adding $q^{1/2}$ (see \cite[Lemma 1.5]{KP18}). 
We write $\Modg(R)$, $R\gmod$, etc.\ for $\Modg(R_\dg)$, $R_\dg\gmod$, 
etc.\ with $\dg=-(\;\cdot\;,\;\cdot\;)$\,.

For $M\in R\gmod$, set $M^\star \seteq  \HOM_{\bR}(M, \bR)$. Then $M^\star$ becomes an  $R(\beta)$-module via  the grade-preserving antiautomorphism of $R(\beta)$ which fixes the generators
 $e(\nu)$, $x_k$, and $\tau_k$'s. 
We say that $M$ is \emph{self-dual} if $M \simeq M^\star$ in $R\gmod$. 
 For each simple module $M$ in $R\gmod$, there exists $m\in \Z$ such that $q^mM$
is self-dual.

For each $i\in I$ and $n\ge 1$, $R(n\al_i)$ has a unique self-dual simple module which is denoted by $L(i^n)$. Sometimes we will denote it by $\ang{i^n}$. 
We have $\dim(\ang{i^n})=n\ms{1mu}!$.

\subsection{Universal R-matrices} \label{subSec: UR}
The category $\Modg(R_\dg)$ is endowed with a monoidal category structure with
the convolution product $\conv$ (e.g.\ see 
\cite{KL09, KL11, Rouquier08} for details).
Moreover it is \KO by \cite[Lemma 3.1]{KKKO15}.
The category $R_\dg\gmod$ is a graded monoidal category satisfying 
\eqref{cond:exactmono}.
We have a fully faithful monoidal functor
$$\Modgc(R_\dg)\to \Pro(R_\dg\gmod)$$
by
$$M\longmapsto \proolim[m]M/\bl R_\dg(R_\dg)_{\ge m} M\br.$$
Hereafter, we identify $\Modgc(R_\la)$ as a full subcategory of
$\Pro(R_\dg\gmod)$.

Note that an \afn in $R_\dg\gmod$ is equivalent to a pair
$(\Ma,z)$ of a graded $R_\dg$-module $\Ma$ and an
injective  endomorphism $z$ of
$\Ma$ of a positive degree such that
$\Ma/z\Ma\in R_\dg\gmod$ and $\Ma_k=0$ for $k\ll0$.

Let $\beta \in \rlQ_+$ with $m =  \Ht(\beta)$. For  $k=1, \ldots, m-1$ and $\nu \in I^\beta$, the \emph{intertwiner} $\varphi_k \in R(\beta) $ is defined by 
\eq
\varphi_k e(\nu) =
\bc
 \bl\tau_k(x_k-x_{k+1})+1\br e(\nu) 
& \text{ if } \nu_k = \nu_{k+1}, 
 \\
 \tau_k e(\nu) & \text{ otherwise.}
\ec\label{def:intertwiner}
\eneq
\begin{lem} [{\cite[Lemma 1.5]{K^3}}] \label{Lem: intertwiners} 
The intertwiners have the following properties.
\begin{enumerate}[\rm (i)]
\item $\varphi_k^2 e(\nu) = \bl Q_{\nu_k, \nu_{k+1}} (x_k, x_{k+1} )+ \delta_{\nu_k, \nu_{k+1}} \br\, e(\nu)$.
\item $\{  \varphi_k \}_{k=1, \ldots, m-1}$ satisfies the braid relation.
\item For a reduced expression $w = s_{i_1} \cdots s_{i_t} \in \sg_m$, we set $\varphi_w \seteq  \varphi_{i_1} \cdots \varphi_{i_t} $. Then
$\varphi_w$ does not depend on the choice of reduced expression of $w$.
\item For $w \in \sg_m$ and $1 \le k \le m$, we have $\varphi_w x_k = x_{w(k)} \varphi_w$.
\item For $w \in \sg_m$ and $1 \le k < m$, if $w(k+1)=w(k)+1$, then $\varphi_w \tau_k = \tau_{w(k)} \varphi_w$.
\end{enumerate}
\end{lem}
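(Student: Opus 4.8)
The plan is to derive everything by direct manipulation of the defining relations of $R(\beta)$, the only genuinely non-formal input being the braid relation for three consecutive strands.

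First I would check (i). On $e(\nu)$ with $\nu_k\neq\nu_{k+1}$ one has $\varphi_ke(\nu)=\tau_ke(\nu)$, so $\varphi_k^2e(\nu)=\tau_k^2e(\nu)=\qQ_{\nu_k,\nu_{k+1}}(x_k,x_{k+1})e(\nu)$, which is the asserted value since $\delta_{\nu_k,\nu_{k+1}}=0$. On $e(\nu)$ with $\nu_k=\nu_{k+1}$, put $y=x_k-x_{k+1}$; the relation $\tau_kx_l-x_{s_k(l)}\tau_k=\bl\delta(l=k+1)-\delta(l=k)\br\sum_{\nu_k=\nu_{k+1}}e(\nu)$ yields $\tau_ky\,e(\nu)=(-y\tau_k-2)e(\nu)$, hence $\tau_ky^2e(\nu)=y^2\tau_ke(\nu)$ and therefore $\tau_ky^2\tau_ke(\nu)=y^2\tau_k^2e(\nu)=y^2\qQ_{\nu_k,\nu_{k+1}}(x_k,x_{k+1})e(\nu)=0$ because $\qQ_{i,i}=0$. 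Expanding $\varphi_k^2e(\nu)=(\tau_ky+1)^2e(\nu)$ and substituting these identities collapses the expression to $e(\nu)$, matching $\bl\qQ_{i,i}+1\br e(\nu)$.

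The heart of the lemma is (ii). For $|k-\ell|\ge2$ it is immediate: $\tau_k$ and $\tau_\ell$ commute and $\tau_k$ commutes with $x_\ell,x_{\ell+1}$, so $\varphi_k\varphi_\ell=\varphi_\ell\varphi_k$. For $\ell=k+1$ I would verify $\varphi_k\varphi_{k+1}\varphi_k\,e(\nu)=\varphi_{k+1}\varphi_k\varphi_{k+1}\,e(\nu)$ for each $\nu$, noting that only $\nu_k,\nu_{k+1},\nu_{k+2}$ intervene; running through the finitely many cases according to which of these three coincide, each reduces via the braid relation for the $\tau$'s (the one involving $\bQ_{\,i,j}$), the $\tau$--$x$ commutation relations, and $\tau_k^2=\qQ_{\nu_k,\nu_{k+1}}(x_k,x_{k+1})$. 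An alternative worth keeping in mind is to invert all $x_a-x_b$ ($a\neq b$) and rewrite $\varphi_k$ there in a normalized form for which the braid relation is visibly an identity of rational operators; this is conceptually cleanest, but the bookkeeping of denominators is itself the obstacle. Either way this rank-three computation is the main technical point of the lemma. Once it is available, (iii) follows from Matsumoto's theorem: two reduced words for the same $w\in\sg_m$ are connected by braid moves, and $\{\varphi_k\}$ satisfies exactly those moves.

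Finally (iv) and (v) follow by induction on the length $\ell(w)$. For a simple reflection $w=s_l$, the commutation relation between $\tau_l$ and $x_m$ gives $\varphi_lx_m\,e(\nu)=x_{s_l(m)}\varphi_l\,e(\nu)$ after a short case check (the correction terms are exactly what the definition of $\varphi_l$ absorbs), while $s_l(k+1)=s_l(k)+1$ forces $|k-l|\ge2$, which is the trivial commuting case for $\varphi_l\tau_k=\tau_k\varphi_l$. For the inductive step in (iv), write $w=s_lw'$ with $\ell(w)=1+\ell(w')$, so $\varphi_w=\varphi_l\varphi_{w'}$ by (iii), and push $x_k$ through: $\varphi_wx_k=\varphi_lx_{w'(k)}\varphi_{w'}=x_{s_l(w'(k))}\varphi_l\varphi_{w'}=x_{w(k)}\varphi_w$; no hypothesis on $w$ is needed. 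For (v) the subtlety is that $w(k+1)=w(k)+1$ need not pass to $w'$, so I would instead exploit that this hypothesis gives $s_{w(k)}w=ws_k$ with lengths adding; concatenating a reduced word for $w$ with $s_k$ on the right and comparing with $s_{w(k)}$ on the left (both words reduced) yields $\varphi_w\varphi_k=\varphi_{w(k)}\varphi_w$ via (iii). Splitting $\varphi_w\tau_ke(\nu)$ according to whether $\nu_k=\nu_{k+1}$, and using $\varphi_we(\nu)=e(w\nu)\varphi_w$, part (iv), the identity $(x_k-x_{k+1})\tau_ke(\nu)=-(\varphi_k+1)e(\nu)$ valid when $\nu_k=\nu_{k+1}$, and the fact that $x_{w(k)}-x_{w(k)+1}$ is a non-zero-divisor in $R(\beta)$, one reduces $\varphi_w\tau_k=\tau_{w(k)}\varphi_w$ to the already-established $\varphi_w\varphi_k=\varphi_{w(k)}\varphi_w$. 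The main obstacle throughout is the rank-three braid verification in (ii); the rest is bookkeeping.
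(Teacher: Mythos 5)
The paper offers no proof of this lemma at all --- it is quoted verbatim from \cite[Lemma 1.5]{K^3} --- so there is nothing internal to compare against; your plan is the standard direct-verification argument and is essentially correct. Your computation for (i) checks out (the key identities $\tau_k(x_k-x_{k+1})e(\nu)=(-(x_k-x_{k+1})\tau_k-2)e(\nu)$ and $\tau_k(x_k-x_{k+1})^2\tau_ke(\nu)=0$ are right), (iii) is exactly Matsumoto, and the inductive scheme for (iv) and the reduction of (v) to $\varphi_w\varphi_k=\varphi_{w(k)}\varphi_w$ via the reduced factorizations $ws_k=s_{w(k)}w$ are sound. Two points deserve sharpening. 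First, the entire weight of the lemma rests on the rank-three braid relation in (ii), which you describe but do not execute; as written this is a declared intention rather than a proof, and the case analysis (according to which of $\nu_k,\nu_{k+1},\nu_{k+2}$ coincide, using the cubic relation involving $\bQ_{\nu_k,\nu_{k+1}}$) must actually be carried out --- your alternative of localizing at the $x_a-x_b$ is legitimate but requires knowing that $R(\beta)$ embeds into that localization, which again needs the basis theorem. Second, in (v) the cancellation step should be stated as injectivity of \emph{left} multiplication by $x_{w(k)}-x_{w(k)+1}$ on $e(w\nu)R(\beta)$: one arrives at $(x_{w(k)}-x_{w(k)+1})\bl\varphi_w\tau_k-\tau_{w(k)}\varphi_w\br e(\nu)=0$ after rewriting $\varphi_ke(\nu)=(-(x_k-x_{k+1})\tau_k-1)e(\nu)$ and pushing the linear factor to the left with (iv), and injectivity then follows from the fact that $R(\beta)$ is free as a left module over $\soplus_\nu\cor[x_1,\dots,x_n]e(\nu)$ (the Khovanov--Lauda basis theorem), not from a generic ``non-zero-divisor in $R(\beta)$'' assertion. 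With those two points filled in, the argument is complete.
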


For $m,n \in \Z_{\ge 0}$, we set $w[m,n]$ to be the element of $\sg_{m+n}$ such that
$$
w[m,n](k) \seteq
\left\{
\begin{array}{ll}
 k+n & \text{ if } 1 \le k \le m,  \\
 k-m & \text{ if } m < k \le m+n.
\end{array}
\right.
$$

For an $R(\beta)$-module $M$ and an $R(\gamma)$-module $N$, the $R(\beta)\otimes R(\gamma)$-linear map $M \otimes N \rightarrow N \conv M$ defined by $$u \otimes v \mapsto \varphi_{w[\height{\gamma},\height{\beta}]}(v \tens u)$$
extends to an $R(\beta+\gamma)$-module homomorphism (neglecting a grading shift)
$$
\Runi_{M,N}\cl  M\conv N \longrightarrow N \conv M.
$$
We call $\Runi_{M,N}$ the {\em universal $R$-matrix} between $M$ and $N$.
Since the intertwiners satisfies the braid relations,  the universal R-matrices $\Runi_{M,N}$ satisfy the Yang-Baxter equation (\cite[(1.9)]{K^3}).

Let $(\Ma,z)$ be an \afn in $R_\dg\gmod$.
We set $\Runi_\Ma(X)=\Runi_{\Ma,X}\cl \Ma\conv X\to X\conv\Ma$
(neglecting the grading shift).
If $(\Ma,\Runi_\Ma)$ is a rational center in $R_\dg\gmod$, then we say that
$(\Ma,\Runi_\Ma)$ is a {\em canonical affinization} of $\Ma/z\Ma\in R_\dg\gmod$.

\subsection{Canonical affinizations in $R_\dg\gmod$}
We recall the notion of affinizations for $R$-modules introduced  in \cite{KP18}. 
For $\beta \in \rlQ_+$ and $i\in I$, let
\begin{align} \label{Eq: def of p}
\mathfrak{p}_{i, \beta}  \seteq \sum_{\nu \in I^\beta} \Bigl(\hs{1ex}  \prod_{a \in \{1, \ldots, \Ht(\beta) \},\ \nu_a=i} x_a \Bigr) e(\nu)\in R(\beta).
\end{align}
Then $\mathfrak{p}_{i, \beta} $ belongs to the center of $R(\beta)$.

  \label{Def: aff}
Let $M$ be a simple module in $R_\dg(\beta)\gmod$.  
 Assume that there exists   a  graded  $R_\dg(\beta)$-module $\Ma$ with an endomorphism $z_{\Ma}$ of $\Ma$
with degree $d_{\Ma} \in \Z_{>0}$ such that
\eq \label{eq:oldaff}
&&\hs{2ex}\parbox{75ex}{
\begin{enumerate}[\rm (i)]
\item $\Ma / z_{\Ma} \Ma \simeq M$,
\item $\Ma$ is a finitely generated free module over the polynomial ring $\bR[z_{\Ma}]$,
\item $\mathfrak{p}_{i,\beta} \Ma \ne 0$ for all $i\in I$.\label{it:nonzeroP}
\end{enumerate}
}\eneq
Note that the conditions (i) and (ii) are equivalent to the statement $(\Ma,\z)$ is an \afn.

For an indeterminate $z$ and a non-zero homogeneous $w\in\cor[z]$, we denote by
$\Ma\vert_{\z=w} \seteq \cor[z] \tens_{\cor[\z]} \Ma$ the object in $\Proc(\cor[z],R_\dg\gmod)$. Here, $\cor[z]$ is a $\cor[\z]$-algebra by
the algebra homomorphism $\z  \mapsto w \in \cor[z]$.
If $w=cz^a$ for some $c\in \cor^\times , a\in \Z_{>0}$ such that $\deg(\z)=a\deg(z)$, then $\Ma\vert_{\z=w} $ is an affine object (\cite[Example 2.5 (ii)]{KP18}). Sometimes  we will write $\Ma\vert_{\z=w} =\Ma_w$  for short.

\Prop[{\cite[Lemma 2.9]{KP18}}] \label{pro:canoaff} 
Let $M$ be a simple module in $R_\dg(\beta)\gmod$. Assume that 
an $R_\dg(\beta)$-module  $(\Ma,z)$  satisfies the conditions \eqref{eq:oldaff}. 
Then $(\Ma,\Runi_\Ma)$ is a rational center 
in $R_\dg\gmod$.
\enprop
\Proof

It is enough to show that the R-matrix
$\Runi_{\Ma,X}\cl\Ma\conv X\to X\conv \Ma$ is an isomorphism in $\Rat(R_\dg\gmod)$
for any $X\in R_\dg\gmod$.

The composition
$\Runi_{\Ma,X}\circ \Runi_{X,\Ma}\cl X\conv \Ma\to \Ma\conv X\to X\conv\Ma$ is given by
$$f(z)\seteq\prod_{i\not=j}\chi_i(\Ma)(t_i)\res[t_i]Q_{i,j}(t_i,t_j)\res[t_j]
\chi_j(X)$$ 
(see \eqref{eq:RNMRMN} below, and for the notation $\chi_i$ and $ \res[t]$,  see \S\,\ref{sec:resultant}, \S\,\ref{subsec:Inv}).
Since $\END_{\cor[z]}(\Ma)\simeq\cor[z]$ by Lemma~\ref{lem:endpreaff},
the homogeneous polynomial $\chi_i(\Ma)(t_i,z)\in\cor[t_i,z]$ 
is monic in $t_i$ and quasi-monic in $z$ 
by the assumption \eqref{it:nonzeroP}\;\eqref{eq:oldaff}. 

Hence $f(z)$ has the form (up to a constant multiple)
$f(z)=z^m-a$ for some $m\in\Z_{\ge0}$ and $a\in\END(X)_{>0}[z]$. 
Since $a^s=0$ for some $s\in\Z_{>0}$,
if we set $g(z)=(z^m)^{s-1}+az(z^m)^{s-2}+\cdots+a^{s-1}$,
then we have
$$f(z)g(z)=g(z)f(z)=z^{ms}-a^s=z^{ms}.$$
Hence 
$\Runi_{\Ma,X}\circ \bl \Runi_{X,\Ma}g(z)\br=z^{ms}\id_{X\tens\Ma}$
is invertible in $\Rat(R_\dg\gmod)$, and hence $\Runi_{\Ma,X}$ has a right inverse.

Similarly,  $\Runi_{\Ma,X}$ has a left inverse.
\QED

\Rem 
Let $(\Ma,z)$ be an \afn of a simple $M\in \Rd$. 
Even if $(\Ma,\Runi_\Ma)$ is a rational center in $\Rd$,
$(\Ma,z)$ may not satisfy \eqref{eq:oldaff}.
For example, $(\Ma\conv C,\Runi_{\Ma}\circ R_C)$ is a rational center
if $(\Ma,\Runi_\Ma)$ is a rational center and 
$(C,R_C)$ is a central object in $\Rd$. \enrem

{\em In the sequel, we use the terminology ``affinization''
in the sense of Definition~\ref{def:affinization}. }

\subsection{Schur-Weyl duality} \label{subSection: SW}

Let $\shc$ be a graded $\cor$-linear monoidal category   
satisfying \eqref{cond:g}. {\em We assume further that $\shc$ is \KO.}
Let $R$ be the quiver Hecke algebra 
associated with  a Cartan matrix $\cartan$ and a set of polynomials  $\st{Q_{i,j}(u,v)}$ as in \eqref{eq:Q}.

Let $\st{(\hSW_i,z_i)}_{i\in I}$ be a family of affinizations in $\shc$
such that
\eq
&&\left\{\parbox{70ex}{
\bna
\item $\deg(z_i)=2(\al_i,\al_i)$,
\item $\SW_i\seteq\hSW_i/z_i\hSW_i$ is real simple in $\shc$ for any $i\in I$,
\item $\Daf(\hSW_i,\hSW_j)=Q_{i,j}(z_i,z_j)$ for $i\not=j$.
\ee}\right.\label{eq:SW}
\eneq
We define the bilinear form  $\dg\cl\rtl\times\rtl\to\Z$ by
$$\dg(\al_i,\al_j)=
\bc\La(\SW_i,\SW_j)&\text{for $i,j\in I$ such that $i\not=j$.}\\
-(\al_i,\al_j)&\text{if $i=j\in I$.}\ec
$$
Then we have
$\dg(\al,\beta)+\dg(\beta,\al)=-2(\al,\beta)$ for any $\al,\beta\in\rtl$.

Let $\Rre_{\hSW_i,\hSW_j}\cl\hSW_i\tens \hSW_j\to \hSW_j\tens \hSW_i$
be a renormalized R-matrix. It is determined up to a constant multiple.
We normalize them such that
\eq
&&\left\{
\parbox{70ex}{
\bna
\item
$\Rre_{\hSW_j,\hSW_i}\circ \Rre_{\hSW_i,\hSW_j}
=Q_{i,j}(z_i,z_j)\id_{\hSW_i\tens \hSW_j}$ for $i\not=j$,
\item for any $i\in I$,
there exists $T_{i}\in\END_{\Pro(\shc)}(\hSW_i\tens \hSW_i)$
such that
$$\Rre_{\hSW_i,\hSW_i}-\id_{\hSW_i\tens \hSW_i}=(z_i\tens 1-1\tens z_i)\circ
T_{i}.$$
\ee}\right.
\label{eq:normR}
\eneq
Such normalizations are possible by Proposition~\ref{prop:nuk=nuk+1}.

We call $\bl\st{(\hSW_i,z_i)}_{i\in I},\st{\Rre_{\hSW_i,\hSW_j}}_{i,j\in I}\br$ a {\em duality datum}.

\Rem
Once $\st{(\hSW_i,z_i)}_{i\in I}$ is given, 
a duality datum $\bl\st{(\hSW_i,z_i)}_{i\in I},\st{\Rre_{\hSW_i,\hSW_j}}_{i,j\in I}\br
$ exists
and it is unique up to constant multiples. 

More precisely,
if $\bl\st{(\hSW_i,z_i)}_{i\in I},\st{\widetilde{\Rre}_{\hSW_i,\hSW_j}}_{i,j\in I}\br$ is another duality datum,
then there exist $c_{i,j}\in\cor^\times$ ($i,j\in I$) such that
$\widetilde{\Rre}_{\hSW_i,\hSW_j}=c_{i,j}\, \Rre_{\hSW_i,\hSW_j}$ and
$c_{i,j}c_{j,i}=1$, $c_{i,i}=1$.
\enrem

\Prop[{cf.\ \cite[\S\,3.1, \S\, 3.2]{K^3}}] 
Let $\bl\st{(\hSW_i,z_i)}_{i\in I},\st{\Rre_{\hSW_i,\hSW_j}}_{i,j\in I}\br$ be a duality datum.
\bnum
\item
There exists a  right exact monoidal functor
$$\hF\cl\Modgc(R_\dg)\to\Pro(\shc)$$
such that
\eq&&\hF(\tL(i)_{z_i})\simeq\hSW_i\qt{and}\quad  \hF(L(i))\simeq \SW_i,  \label{eq:SW1}\\
&&\hF(e(i,j)\vphi_1)=\Rre_{\hSW_i,\hSW_j}\in \HOM_{\cor[z_i,z_j]}(\hSW_i\tens \hSW_j,\hSW_j\tens \hSW_i). \label{eq:SW2}
\eneq
Here $\tL(i)_{z_i}$ is the affinization $\bl R(\al_i),z_i\br=(\cor[x_1],x_1)$
of $L(i)=R(\al_i)/R(\al_i)x_1$, 
$\vphi_1$ is given in \eqref{def:intertwiner}, and
$e(i,j)\vphi_1\cl R_\dg(\al_i+\al_j)e(i,j)
\to R_\dg(\al_i+\al_j)e(j,i)$ is the morphism by the right multiplication.

Moreover, such a right exact monoidal functor $\hF$ is unique up to an isomorphism.
\item
The functor $\hF$  can be restricted to a monoidal functor
$$\F\cl R_\dg\gmod \to\ \shc.$$
\item
Assume further that $\cmA$ is of finite type. 
Then the functor $\hF$ is exact and we have the following properties.
\bna
\item $\F$ sends a simple module in $R_\dg\gmod$  to a  simple objects in $\shc$ or $0$.
\item If $(\Ma,\z)$ is an \afn of $M$ in $R_\dg\gmod$, then $(\hF(\Ma),\hF(\z))$ is an \afn of $\F(M)\in\shc$.

\end{enumerate}
\ee
\enprop
\begin{proof}
First note that $\st{\Rre_{\hSW_i,\hSW_j}}_{i,j\in I}$ satisfies the Yang-Baxter equation
by Lemma \ref{lem:YB}.

For each $\beta\in \prtl$ and $\nu=(\nu_1,\ldots,\nu_m) \in I^\beta$, define objects in  $\Pro(\shc)$ by
$$\hSW(\nu)\seteq \hSW_{\nu_1}\tens \cdots  \tens \hSW_{\nu_m} \qt{and} \quad
\hSW(\beta)\seteq\soplus_{\nu \in I^\beta}   \hSW(\nu).$$

Set 
$T_{i,j}=\Rre_{\hSW_i,\hSW_j}$ if $i\not=j$ and
$T_{i,j}=T_i$ if $i=j$.
Here, $T_i$ is given in \eqref{eq:normR}.

Then we can endow $\hSW(\beta)\in\Pro(\shc)$ with a graded $R_\dg(\beta)^{\opp}$-module structure in $\Pro(\shc)$
as follows (cf.\ \cite[\S\;3]{K^3}):
\begin{enumerate}
\item $e(\nu)$ is the projection $\hSW(\beta) \to \hSW(\nu)\subset \hSW(\beta)$,
\item $x_k$ acts on $\hSW(\nu)$ by $$\hSW_{\nu_1}\tens \cdots \tens\hSW_{\nu_{k-1}} \tens z_{\nu_k}  \tens \hSW_{\nu_{k+1}} \tens \cdots \tens \hSW_{\nu_m},$$
\item  $\tau_k\cl \hSW(\nu)\to\hSW\bl s_k\nu\br$ is given by 
\eqn 
\hSW_{\nu_1}\tens \cdots\tens \hSW_{\nu_{k-1}} \tens T_{\nu_k,\,\nu_{k+1}}  \tens \hSW_{\nu_{k+2}} \tens \cdots \tens \hSW_{\nu_m}.  
\eneqn
\end{enumerate}

Since $R(\beta)$ is left noetherian (\cite[Corollary 2.11]{KL09}), there exists a right exact functor 
$$\hF_\beta\cl \Modgc(R_\dg(\beta)) \to \Pro(\shc)$$
given by
$$M \mapsto \hSW(\beta) \tens_{R_\dg(\beta)} M.$$
Note that
$\hF_{\al_i}(\tL(i)_{z_i}) \simeq  \hF_{\al_i}(R_\dg(\al_i)) \simeq \hSW_i \qt{for each} \ i\in I$.

Since $\hF_\beta$ is right exact and $\hF_{\al_i}(z_i) = z_i$, we have $\hF_{\al_i}(L(i))\simeq \SW_i$.

Set
$$\hF\seteq \soplus_{\beta\in \prtl} \hF_\beta \cl \Modgc(R_\dg) \to \Pro(\shc).$$
Then, $\hF$ is a monoidal functor.  
Indeed, for any $M\in \Modc(R_\dg(\beta))$ and $N\in \Modc(R_\dg(\gamma))$, we have
\eqn
\hF(M\conv N) &&\seteq\hSW(\beta+\gamma)  \tens_{R_\dg(\beta+\gamma)} \left(R_\dg(\beta+\gamma) e(\beta+\gamma) \tens_{R_\dg(\beta)\tens R_\dg(\gamma)} (M\tens N)\right) \\
&&\simeq 
\hSW(\beta+\gamma)  e(\beta+\gamma)  \tens_{R_\dg(\beta)\tens R_\dg(\gamma)} (M\tens N) \\
&&\simeq \left(\hSW(\beta)\tens \hSW(\gamma) \right)  \tens_{R_\dg(\beta)\tens R_\dg(\gamma)}( M\tens N) \\
&&\simeq \left(\hSW(\beta)\tens_{R_\dg(\beta)}M \right) \tens \left(\hSW(\gamma) \tens_{ R_\dg(\gamma)} N \right) \\
&&\simeq \hF(M)\tens \hF(N).
\eneqn

For the uniqueness of $\hF$, see Lemma \ref{lem:uniqueness} below.

Set $\Pbb(\beta)=\soplus_{\nu\in I^\beta}\cor[x_1,\ldots,x_{\height{\beta}}]e(\nu)$.
Then $\Pbb(\beta)$ is a commutative subalgebra of $R(\beta)$.
Since  $\hSW(\beta)\in\Proc(\Pbb(\beta),\shc)$, Lemma~\ref{lem:fin}
implies that
$$\hSW(\beta)\tens_{\Pbb(\beta)} M \in \shc\qt{for any $M\in R_\dg\gmod$.}$$
Since $\hSW(\beta)\tens_{R_\dg(\beta)} M$ is a quotient of $\hSW(\beta)\tens_{\Pbb(\beta)} M$, we have
$\hF(M) \in \shc$ by Lemma \ref{lem:art}. 
Because $\shc \to \Pro(\shc)$ is fully faithful, we get the restriction
$$\F\seteq\hF\vert_{R_\dg\gmod} \cl R_\dg\gmod \to \shc.$$

\bigskip
Let us show (iii).
Assume that $\cartan$ is of finite type.  
Then $R(\beta)$ has a finite global dimension by \cite{Kato14, McNamara15}. 
Hence, \cite[Proposition 3.7]{K^3} implies that
$R(\beta)^\opp$-modules is flat over  $R(\beta)^\opp$ as soon as
it is flat over $\Pbb(\beta)$.

Since $\hSW_i$'s are \afns, $\hSW(\beta)$ is 
flat over $\Pbb(\beta)$
by  Proposition  \ref{prop:AAflat} (iv). 
Hence  $\hSW(\beta)$ is $R^{\opp}(\beta)$-flat,
which implies that $\hF_\beta$ is exact.

Assume that $M$ is simple in $R_\dg(\beta)\gmod$.
Let us show that $\F(M)$ is simple or zero
by induction on $\height{\beta}$.
We may assume that $\beta\not=0$.
Then there exist $i\in I$ and 
a simple $R_\dg(\beta-\al_i)$-module $N$
such that  $M\simeq N\hconv L(i)$.
Then $M\simeq \Im(\rmat{})$, where $\rmat{}$ is 
an R-matrix $\rmat{}\cl N\conv L(i)\to L(i)\conv N$.
Since $\F$ is exact, we have
$\F(M)\simeq \F(\Im(\rmat{})) \simeq \Im \F(\rmat{})$.
We may assume that $\F(\rmat{})$ is non-zero.
Hence $\F(N) \neq 0$. By the induction hypothesis,
$\F(N)$ is simple.
Then Proposition \ref{prop:simplehd} implies that
$\Im \F(\rmat{})$ is isomorphic to the simple head $\F(N)\hconv \F(L(i))$, 
since $\shc$ is assumed to be \KO and $ \F(L(i)) \simeq \SW_i$ is \afr. Hence we get  (a). 

\snoi
(b) is a direct consequence of the exactness of $\hF$.
\end{proof}

\begin{lem} \label{lem:uniqueness}
Let $\bl\st{(\hSW_i,z_i)}_{i\in I},\st{\Rre_{\hSW_i,\hSW_j}}_{i,j\in I}\br$ be a duality datum and let
$$\hF\cl \Modgc(R_\la) \to \Pro(\shc)$$
be a right exact monoidal functor satisfying \eqref{eq:SW1} and \eqref{eq:SW2}. 
Then it is  unique up to an isomorphism.
\end{lem}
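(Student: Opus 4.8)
The plan is to show that a right exact monoidal functor $\hF$ satisfying \eqref{eq:SW1} and \eqref{eq:SW2} is forced on a generating family of projective objects, and on all morphisms between them, so that any two such functors are canonically isomorphic. First I would recall that $R_\dg(\beta)$ is left noetherian, so every $M\in\Modgc(R_\dg(\beta))$ admits a finite presentation $P_1\to P_0\to M\to0$ with $P_0,P_1$ finite direct sums of grading shifts of the projective modules $R_\dg(\beta)e(\nu)$, $\nu\in I^\beta$. Since $\hF$ is right exact, $\hF(M)\simeq\Coker(\hF(P_1)\to\hF(P_0))$, so $\hF$ is determined up to unique isomorphism by its restriction to the full additive subcategory generated by the $R_\dg(\beta)e(\nu)$. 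Unwinding the definition of $\conv$ gives a canonical isomorphism $R_\dg(\beta)e(\nu)\simeq\tL(\nu_1)_{z_{\nu_1}}\conv\cdots\conv\tL(\nu_m)_{z_{\nu_m}}$ in $\Modgc(R_\dg)$ (with $m=\height{\beta}$), so monoidality of $\hF$ together with \eqref{eq:SW1} forces $\hF(R_\dg(\beta)e(\nu))\simeq\hSW_{\nu_1}\tens\cdots\tens\hSW_{\nu_m}$.

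Next I would pin down $\hF$ on morphisms between these projectives. A morphism $R_\dg(\beta)e(\nu')\to R_\dg(\beta)e(\nu)$ of left modules is right multiplication by an element of $e(\nu')R_\dg(\beta)e(\nu)$, and since $R_\dg(\beta)$ is generated as an algebra by the $e(\nu)$, $x_k$ and $\tau_l$, every such morphism is a $\cor$-linear combination of compositions of right multiplications by $x_ke(\mu)$ and by $\tau_le(\mu)$. Under the identification above, right multiplication by $x_ke(\nu)$ corresponds to the endomorphism $z_{\nu_k}$ of the $k$-th tensor factor, hence is determined by \eqref{eq:SW1}; right multiplication by $\tau_le(\nu)$ with $\nu_l\neq\nu_{l+1}$ corresponds to $e(\nu_l,\nu_{l+1})\vphi_1$ acting on the $l$-th and $(l{+}1)$-st factors, hence equals $\Rre_{\hSW_{\nu_l},\hSW_{\nu_{l+1}}}$ on those factors by \eqref{eq:SW2}; and when $\nu_l=\nu_{l+1}=i$ the relation $\vphi_1e(i,i)=(\tau_1(x_1-x_2)+1)e(i,i)$ yields $(z_i\tens\id-\id\tens z_i)\circ\hF(\tau_1e(i,i))=\Rre_{\hSW_i,\hSW_i}-\id$, so $\hF(\tau_1e(i,i))$ is determined because $z_i\tens\id-\id\tens z_i$ is a monomorphism on the affinization $\hSW_i\tens\hSW_i$. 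Thus $\hF$ is completely determined on the generating subcategory, and hence on all of $\Modgc(R_\dg)$.

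Finally, for the uniqueness statement itself, given two right exact monoidal functors $\hF,\hF'$ satisfying \eqref{eq:SW1}--\eqref{eq:SW2}, I would fix isomorphisms $\hF(\tL(i)_{z_i})\isoto\hSW_i\isofrom\hF'(\tL(i)_{z_i})$, compose them into $\xi_i\cl\hF(\tL(i)_{z_i})\isoto\hF'(\tL(i)_{z_i})$, propagate the $\xi_i$ through the monoidal structure constraints to isomorphisms $\xi_\nu\cl\hF(R_\dg(\beta)e(\nu))\isoto\hF'(R_\dg(\beta)e(\nu))$, and check, using the computations above, that the $\xi_\nu$ intertwine the images under $\hF$ and under $\hF'$ of the right multiplications by $x_ke(\mu)$ and $\tau_le(\mu)$, hence of every morphism between projectives. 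For a presentation $P_1\to P_0\to M\to0$ one then obtains an induced isomorphism $\hF(M)\isoto\hF'(M)$ on cokernels, and naturality in $M$ as well as compatibility with the monoidal structures are verified on presentations. The step I expect to be the main obstacle is precisely this last bookkeeping: identifying $\hF$ of the structure maps $x_k,\tau_l$ under the isomorphism $R_\dg(\beta)e(\nu)\simeq\tL(\nu_1)_{z_{\nu_1}}\conv\cdots\conv\tL(\nu_m)_{z_{\nu_m}}$ (involving the monoidal constraints and the element $w[\,\cdot\,,\cdot\,]$ in the definition of $\Runi$), checking that the $\xi_\nu$ are compatible with these, and confirming that the resulting isomorphism on $\hF(M)$ is independent of the chosen presentation — routine, but requiring care.
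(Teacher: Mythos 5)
Your proposal is correct and follows essentially the same route as the paper: force $\hF(R(\beta)e(\nu))\simeq\hSW(\nu)$ via monoidality and \eqref{eq:SW1}, pin down the (right) action of the generators $x_k,\tau_l$ using \eqref{eq:SW2} together with the fact that $x_k-x_{k+1}$ acts as a monomorphism when $\nu_k=\nu_{k+1}$, and then extend by right exactness through free presentations. The only difference is that the paper sidesteps the presentation-independence bookkeeping you flag at the end by producing a canonical comparison morphism $\hSW(\beta)\tens_{R(\beta)}M\to\hF(M)$ directly from the adjunction $\Hom_{\Pro(\shc)}(\hSW(\beta)\tens_{R(\beta)}M,\hF(M))\simeq\Hom_{R(\beta)}(M,\Hom_{\Pro(\shc)}(\hSW(\beta),\hF(M)))$, which is automatically functorial in $M$.
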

\begin{proof}
Since $R(\beta)e(\nu) \simeq L(\nu_1)_{z_{\nu_1}} \conv \cdots \conv L(\nu_r)_{z_{\nu_r}} $  in $\Modgc(R_\la)$,  by \eqref{eq:SW1} we have
$$\hF(R(\beta)e(\nu)) \simeq \hSW(\nu) \qt{and} \quad \hF(R(\beta))\simeq \hSW(\beta)$$
for any $\beta\in \rtl_+$ and $\nu \in I^\beta$.
Note that 
$\hF(R(\beta))\simeq \hSW(\beta)$ has an $R(\beta)^\opp$-module structure,  which is uniquely determined by  \eqref{eq:SW1} and \eqref{eq:SW2} since the action of $x_k-x_{k+1}$ on $\hSW_\nu$ with $\nu_k=\nu_{k+1}$ is a monomorphism.

Hence for each $M\in \Modgc(R_\la(\beta))$ there exists a functorial morphism
\eq \hSW(\beta)\tens_{R(\beta)} M  \To \hF(M) \label{eq:functorial}
\eneq
induced from
\eqn
&&\Hom_{\Pro(\shc)}\bl  \hSW_\beta\tens_{R(\beta)} M, \hF(M) \br  
\simeq \Hom_{R(\beta)} \bl M, \Hom_{\Pro(\shc)}(\hSW_\beta,\hF(M)) \br \\
&&\simeq \Hom_{R(\beta)} \bl \Hom_{R(\beta)}(R(\beta),M), \Hom_{\Pro(\shc)}(\hF(R(\beta)),\hF(M)) \br. 
\eneqn
Note that \eqref{eq:functorial} is an isomorphism  if $M$ is a free $R(\beta)$-module.
Since $\hF$ is right exact, one can take a free presentation  of $M$ to  conclude that \eqref{eq:functorial} is an isomorphism for a general $M$.
\end{proof}

\section{Localizations of $R\gmod$} \label{Sec: localizations}

\subsection{Categories  $\catC_w$   
and  $\catC^*_w$}
Let $R$ be a quiver Hecke algebra as in section \ref{Sec:QHSW}.
For $\al,\beta\in\prtl$, we set
$$e(\al,\beta)=\sum_{\substack{\nu\in I^{\al+\beta}\\
\sum_{k=1}^{\height{\al}} \al_{ \nu_k} =\al,\ \sum_{k=1}^{\height{\beta}} \al_{ \nu_{k+\height{\al}}}
 =\beta}}\hs{-3ex}e(\nu)\hs{1ex}\in R(\al+\beta).$$
For $M\in R(\beta)\gmod$ we define
\begin{align*}
\gW(M) \seteq  \{  \gamma \in  \rlQ_+ \cap (\beta - \rlQ_+)  \mid  e(\gamma, \beta-\gamma) M \ne 0  \}, \\
\gW^*(M) \seteq  \{  \gamma \in  \rlQ_+ \cap (\beta - \rlQ_+)  \mid  e(\beta-\gamma, \gamma) M \ne 0  \}.
\end{align*}

Let $\weyl$ be the Weyl group, that is, the subgroup of
$\Aut(\wtl)$ generated by $\st{s_i}_{i\in I}$ where
$s_i(\la)=\la-\ang{h_i,\la}\al_i$.
 Let us denote by $>$ the Bruhat order on  $\weyl$. 

For $w\in\weyl$, let us define the full monoidal subcategories of $R\gmod$
by
\eq
&&\ba{l}
\catC_w\seteq\st{M\in R\gmod\Mid \W\,(M)\subset\prtl\cap\, w\nrtl},\\
\catC^*_w\seteq\st{M\in R\gmod\Mid \W^*(M)\subset\prtl\cap\, w\nrtl}.
\ea\label{def:Cw}
\eneq

Recall that there is an involutive $\cor$-algebra automorphism $\psi$  of $R(\beta)$   
given by
\eq e(\nu_1,\ldots, \nu_n)\mapsto e(\nu_n,\ldots,\nu_1),\quad x_k\mapsto x_{n+1-k} 
,\quad \tau_k\mapsto -\tau_{n-k},
\label{def:psi}\eneq
where $n=\height{\beta}$.
Then $\psi$ induces an equivalence of  monoidal categories:
$$\psi_*\cl R\gmod \simeq (R\gmod)^\rev.$$
Here, for a monoidal category $\sht$, $\sht^\rev$ denotes the monoidal category 
endowed with the reversed tensor product $\tens_\rev$ defined by $ M\tens_\rev N\seteq N\tens M$.

Since we have 
$\psi(e(\gamma, \beta-\gamma)) = e (\beta-\gamma, \gamma)$ for $\beta, \gamma \in \rtl_+$ with $\beta-\gamma \in \rtl_+ $, the automorphism $\psi$ induces
 an equivalence of monoidal categories 
$$\catC^*_w \simeq (\catC_w)^\rev.$$

For $\lambda,\mu\in\wtl$, we define a partial order
$\lambda \wle \mu$ 
if there exists a sequence of
real positive roots $\beta_k$ ($1\le k\le \ell$)
such that $\la=s_{\beta_\ell}\cdots s_{\beta_1}\mu$ and
$(\beta_k,s_{\beta_{k-1}}\cdots s_{\beta_1}\mu)\ge0$ for $1\le k\le\ell$.
Here $s_{\beta}(\la)=\la-(\beta^\vee,\la)\beta$ with
$\beta^\vee=\frac{2}{(\beta,\beta)}\beta$.

Assume that $\lambda, \mu \in \weyl \Lambda$ for some $\La \in \pwtl$ and that $\lambda \wle \mu$.
Then there exists a module $\dM(\lambda, \mu)$ in $R(\mu-\la)\gmod$, called  the \emph{determinantial module}. We refer \cite[Section 3.3]{KKOP21} for the definition and properties of determinantial modules.
In particular, $\dM(\la,\mu)$ admits an affinization (\cite[Theorem 3.26]{KKOP21}).

\subsection{Localizations} \label{subSec: Loc}
In this subsection we recall the localization $\tcatC_w$ of the monoidal category $\catC_w$ introduced in \cite{KKOP21}. 
Throughout this subsection, $w\in \weyl$ is an element of the Weyl group and
we assume that 
 $I_w=I$, where $I_w\seteq\st{i_1,\ldots,i_r}$ for a reduced expression $w=s_{i_1}\cdots s_{i_r}$.

For each $i\in I$, set $\Ctr_i\seteq\dM(w{\La_i},{\La_i})$. 
Then for each $X\in R(\beta)\gmod$, there exists a morphism $R_{\dC_i}(X) \cl \dC_i \conv X \to q^{\dphi_i(\beta)} X \conv \dC_i$, where
$$
\dphi_i(\beta)  = - (w \La_i + \La_i , \beta) \qquad \text{for any $\beta\in \rtl$.}
$$
Moreover, the family $(\dC_i, \coR_{\dC_i}, \dphi_i)_{ i\in I}$
forms a \emph{real commuting family of  non-degenerate  graded \ro left\rf braiders} in $R\gmod$ (\cite[Proposition 5.1]{KKOP21}).
Note that $(\dC_i, \coR_{\dC_i}\vert_{\catC_w})$
is a central object of $\catC_w$ (\cite[Theorem 5.2]{KKOP21}),  which means that $\coR_{\dC_i}(X)$ is an isomorphism for all $X\in\catC_w$.

Hence by \cite[Theorem 2.12]{KKOP21}, there exists a localization of $\catC_w$
by the family $(\dC_i, \coR_{\dC_i}\vert_{\catC_w})$ and a canonical functor
$$\Phi_w \cl \catC_w \to \tcatC_w \seteq \catC_w [ \dC_i^{\circ -1 } \mid i \in I ].$$

We have the following properties (\cite{KKOP21, KKOP22}):
\bnum
\item  the category $\tcatC_w$ is a $\cor$-linear abelian rigid monoidal category where the tensor product is a natural extension of $\conv$ on $\catC_w$,
\item  the grading shift functor $q$ and the contravariant functor $M \mapsto M^\star$ on $\catC_w$   are extended to $\lRg$,
\item 
for any simple module $M\in\lRg$, there exists a unique $n\in\Z$ such that
$q^nM$ is self-dual,
\item the objects $\Phi_w(\dC_i)$ are invertible in $\lRg$, that is, the functors $\Phi_w(\dC_i) \conv \scbul $ and $\scbul\conv \Phi_w(\dC_i) $ are equivalences,
\item for each $\La \in \wtl$, there is an invertible object $\dC_\La \in \tcatC_w$ such that $\dC_\La \conv \dC_{\La'} \simeq q^{\gH(\La,\La')}  \dC_{\La+\La'}$ for $\La,\La'\in \wtl$, and 
$\dC_{\La} =\Phi_w(\dM(w\La,\La))$ for $\La\in \pwtl$, where $\gH(\scbul,\scbul)$ is a $\Z$-bilinear map on $\lG$ determined by $\gH( \La_i, \La_j ) = (\La_i, w \La_j - \La_j)$,
\item for any simple object $S$ of $\Cw$,
the object $\Phi_w(S)$ is simple in $\tcatC_w$, 
 \item
   every simple object $M$ of $\lRg$ is isomorphic to $\dC_{ \La } \circ \Phi_w(S) $ for some simple object $S$ of $\catC_w$ and $ \La \in  \wlP $
   and hence $\HOM_{\tcatC_w}(M,M) =\cor \id_{M}$,
\item every object in $\tcatC_w$ has finite length,
\item
 for two simple objects $S$ and $S'$ in $\catC_w$ and $\La,\La'\in \wlP$,
$\dC_{\La} \conv  \Phi_w(S) \simeq \dC_{\La'} \conv  \Phi_w(S') $ in $\lRg$
if and only if $q^{\gH(\La,\mu)} \dC_{\La+\mu} \conv S  \simeq  q^{\gH(\La',\mu)} \dC_{\La'+\mu} \conv  S'  $ in
$\catC_w$
for some $\mu\in \wlP$
such that $\La+\mu, \La'+\mu \in \wlP_+$.
\end{enumerate}

If one localizes the category $R\gmod$ via the same family $(\dC_i, \coR_{\dC_i}, \dphi_i)_{ i\in I}$, then the resulting category turns out to be equivalent to the category $\tcatC_w$ (\cite[Theorem 5.9]{KKOP21}). 
Denoting by $\Qt_w$ the composition of the equivalence and the canonical functor from $R\gmod$ to the localization, we have
$$\Qt_w \cl R\gmod\to \tcatC_w,$$
and 
the following diagram is quasi-commutative:
$$\xymatrix{\catC_w\ar[r]\ar[dr]_-{\Phi_w}&R\gmod\ar[d]^-{\Qt_w}\\
&\tcatC_w\,. } $$

Similarly, the family $(\psi_*(\dC_{i}) , \psi_* \circ \coR_{\dC_i}\circ  \psi_*, \dphi_i)_{ i\in I}$ forms a real commuting family of \emph{right} braiders in $R\gmod$ so that there exists a localization $\tcatC^*_w$ of $\catC^*_w$ and $R\gmod$ via the family. 
Then the localization functors also satisfy
$$\xymatrix{\catC^*_w\ar[r]\ar[dr]_-{\Phi^*_w}&R\gmod\ar[d]^-{\Qt^*_w}\\
&\tcatC^*_w\,.}$$
Note that the functor
 $\Qt^*_w$ is the composition 
 $$\Qt^*_w \cl R\gmod \To[\psi_*] (R\gmod)^\rev  \To[\Qt_w] (\tcatC_w)^\rev\To[\psi_*]\tcatC^*_w.$$

Note that the functor $\Phi_w$ is a faithful exact monoidal functor.
In particular, a morphism $f$ in $\Cw$ is 
a monomorphism (resp.\ epimorphism, isomorphism) if and only if so is $\Phi_w(f)$.

Now let us show that $\catC_w\to\tcatC_w$ is fully faithful.
In order to see this, we start by the following proposition.

\Prop\label{prop:sub}
Let $C$ be a real simple module in $R\gmod$ with an affinization $(\Ctr,z)$.
Let $X\in R\gmod$ and assume that
$\rmat{C,X}\seteq\Rre_{\Ctr,X}\vert_{z=0}\cl C\conv X\to X\conv C$ is an isomorphism up to a grading shift.
Then for any submodule $Y\subset C\conv X$, there exists 
$X'\subset X$ such that $Y=C\conv X'$.
\enprop
\Proof
Set $\rmat{}\seteq\rmat{C,X}$.

Let us consider the following commutative diagram
$$\xymatrix@C=7ex@R=4ex{
C\conv Y\ar[rr]\ar@{>->}[d]&&Y\conv C\ar[d]\ar@{>->}[d]\\
C\conv C\conv X\ar[r]^{\id}&C\conv C\conv X\ar[r]^{C\circ\rmat{}}&
C\conv X\conv C\;.}
$$
Hence we have
$C\conv\rmat{}(Y)\subset Y\conv C\subset C\conv X\conv C$.
By the \KO property, there exists $X'\subset X$ such that
$$\rmat{}(Y)\subset X'\conv C\qtq C\conv X'\subset Y.$$
Hence we have
$$\dim(Y)=\dim(\rmat{}(Y)) \le \dim(X'\conv C)= \dim(C\conv X')\le \dim(Y).$$
Since $\dim(C\conv X')=\dim(Y)$, we have
$Y=C\conv X'$, as desired.
\QED

\Cor
Let $M\in\Cw$.
For any $X\subset \Phi_w(M)$, there exists
$X'\subset M$ such that $X=\Phi_w(X')$.
\encor

\vs{3ex}

\Th\label{th:ff} 
$\Phi_w\cl \Cw\to\tCw$ is fully faithful.
\enth

\Proof

It is enough to show that
\eq\Hom_{R\gmod}(M,N)\To\Hom_{R\gmod}(\Ctr\conv M, \Ctr\conv N)\label{mor:ff}
\eneq
is bijective for any $M,N\in\Cw$ and $\Ctr=\dM(w\La,\La)$. 
The injectivity is evident. 
Let us show that any $f\in \Hom_{R\gmod}(\Ctr\conv M, \Ctr\conv N)$
is in the image of
$\Hom_{R\gmod}(M,N)$.

Let $\vphi\cl \Ctr\conv M\to(\Ctr\conv M)\oplus(\Ctr\conv N)$
be the graph of $f$, and let $Z\subset
(\Ctr\conv M)\oplus(\Ctr\conv N)\simeq\Ctr\conv (M\oplus N)$
be the image of $\vphi$.
Then, Proposition \ref{prop:sub} implies that there exists $K\subset M\oplus N$ such that
$Z=\Ctr\conv K$.
Let $p_1\cl K\to M$ and $p_2\cl K\to N$ be the projections.
Since the composition $\Ctr\conv M\isoto\Ctr\conv K\To[{p_1}]
\Ctr\conv M$ is the identity, the faithfulness of $\Phi_w$ 
implies that $p_1$ is an isomorphism.
Since the composition
$\Ctr\conv M\isoto\Ctr\conv K\To[{p_2}]
\Ctr\conv N$ is equal to $f$,
we obtain $f=\Ctr\conv(p_2\,p_1^{-1})$.
\QED

\Cor The full subcategory $\catC_w$ of $\tcatC_w$ is stable by taking subquotients.
\encor
\Proof
Let $Y\in\tcatC_w$ be a subobject of $X\in\catC_w$.
Then there exist $\La\in\pwtl$ such that
$\dC_\La\conv Y\subset \dC_\La\conv X$ belongs to $\catC_w$.
Then Proposition~\ref{prop:sub} implies that $Y\in\catC_w$.
\QED

\Rem
Since $\catC_w$ is a full subcategory of $\tcatC_w$, 
we sometimes regard an object of $\catC_w$ as an object of $\tcatC_w$,
namely we identify $\Phi_w(M)$ and $M$ for $M\in\catC_w$.
\enrem

\Rem\label{rem:ext} The full subcategory $\catC_{w}$ is not stable by taking extensions in $\tcatC_w$ in general.
Indeed taking a center $C=\dM(w\La,\La)$,
we have an extension of $\one$:
$$0\to C\conv C^{-1}\To (\widetilde{C}/z^2\widetilde{C})\conv C^{-1}\To  C\conv C^{-1}\To 0,$$
where $(\widetilde{C},z)$ is an affinization of $C$.
\enrem

Note that $\tcatC_w$ is a rigid monoidal abelian category satisfying \eqref{cond:g} with $\La=\rtl$.
Recall that a real simple object of $\tcatC_w$ is \afr
if it admits an affinization (see Definition~\ref{def:affinization}).
Then the following proposition immediately follows 
from Proposition~\ref{prop:simplehd} and Corollary~\ref{cor:dimEND}.
\Prop\label{prop:dim1}
Let $X,Y\in\tCw$ be simple objects.
Assume that one of them is \afr.
Then, we have
\eqn
\HOM_{\tCw}(X\conv Y,X\conv Y)&&=\cor\id_{X\circ Y},\\
\HOM_{\tCw}(X\conv Y,Y\conv X)&&=\cor\rmat{X,\, Y}
\eneqn
for some non-zero $\rmat{X, Y}$.
\enprop
Recall that $\rmat{X, Y}$ is the \emph{R-matrix} between $X$ and $Y$ and  denote by $\La(X,Y)$ the degree of $\rmat{X, Y}$.
If $X \simeq X'\conv \Ctr_\la$ for some $X' \in \Cw$, $\la \in \wtl$,  and $Y\in \Cw$, then we have
$$\La(X,Y) = \La(X',Y)+\La(\Ctr_\la,Y).$$
Note that $\La(\Ctr_\la,Y)=-\bl w\la+\la,\wt(Y)\br$.

\Lemma\label{lem:extcenter}
Let $\shc$ be either $R\gmod$ or $\Cw$, and let
$Q\cl \shc\to \tCw$ be the localization functor.
If $(\dM, R_\dM)$ is a rational center in $\shc$,
then there is a rational center $(Q(\dM),R_{Q(\dM)})$ in $\tCw$ such that
$R_{Q(\dM)}(Q(X))=Q\bl R_\dM(X)\br$ for any $X\in\shc$..
\enlemma

\Proof
Let $\sha$ be the category whose object is a pair
$(X,R)$ where $X\in\tCw$ and $R\cl Q(\dM)\conv X\to X\conv Q(\dM)$
is an isomorphism in $\Rat(\tCw)$.
The morphisms of $\sha$ are defined in an obvious way so that
we have a faithful functor $\sha\to\tCw$.
Then, $\sha$ is an abelian category.
It has a monoidal structure by
$(X, R)\tens (Y\tens S)=(Z,T)$,
where $Z=X \conv Y$ and $T$ is the composition
$$T\cl Q(\dM) \conv Z=Q(\dM)\conv X\conv Y\To[R]X\conv Q(\dM)\conv Y\To[S]
X\conv Y\conv Q(\dM)=Z\conv Q(\dM).$$
Now we have a monoidal functor $F\cl \shc\to\sha$ defined by
by
$$F(X)=\bl Q(X),Q(R_\dM(X))\br\qt{for $X\in\shc$.}$$
Then it is easy to see that
$F(\dC_i)$ is an invertible object of $\sha$ for any $i\in I$ and
$F(\coR_{\dC_i}(X))$ is an isomorphism for any $X\in\shc$.
Hence, by the universal property of $\shc\to\tCw$,
the functor $F\cl \shc\to \sha$
extends to
$\tF\cl\tCw\to\sha$.

Now, for any $X\in \tCw $, define
$R_{Q(\dM)}(X)$ by $R_{Q(\dM)}(X)=R$, where $\tF(X)=(X, R)\in\sha$.
Then $\bl Q(\dM),R_{Q(\dM)}\br$ is a rational center in $\tCw$.
\QED

\section{Resultant algebra and Affinization of Invariants} \label{Section: RIA}

\subsection{Resultant algebra}\label{sec:resultant}
Let $A$ be a commutative $\cor$-algebra.
Let $z$ be an indeterminate.
Recall that a polynomial $f(z)=\sum_{k=0}^{n}a_kz^{n-k}\in A[z]$
is a quasi-monic (resp.\ monic) polynomial of degree $n$ if $a_k\in A$ and $a_0\in \cor^{\times}$ (resp.\ $a_0=1$).
For a quasi-monic polynomial $f(z)$ of degree $m$
and a quasi-monic polynomial $g(z)$ of degree $n$,
we write their {\em resultant}
$f\res g$ for
$$\prod_{j=1}^m\prod_{k=1}^n(x_j-y_k)
\equiv\prod_{j=1}^m g(x_j)\equiv\prod_{k=1}^n f(y_k)
\mod \cor^{\times},$$
writing formally 
$f(z)=a\prod_{j=1}^m(z-x_j)$ and $g(z)=b\prod_{k=1}^n(z-y_k)$ with 
$a,b\in\cor^{\times}$.
Here $B\equiv C\mod \cor^{\times}$ means that $B=c\,C$ for some $c\in\cor^\times$.
It is well-defined since any symmetric polynomial of $\st{x_j}_{1\le j\le m}$ is expressed by the coefficients of $f(z)$.

We regard $f\res g$ as an element of $A/\cor^{\times}$.

The resultant product satisfies:
\eqn
&&f(z)\res g(z)\equiv g(z)\res f(z),\\
&&\bl f_1(z)\cdot f_2(z)\br\res g(z)\equiv \bl f_1(z)\res g(z)\br
\bl f_2(z)\res g(z)\br.
\eneqn

A quasi-monic rational function with coefficients in
$A$ is a quotient $f(z)/g(z)$
for quasi-monic homogeneous polynomials $f(z)$ and $g(z)$
with coefficients in $A$. We can consider them as an element of $A((z^{-1}))$,
the ring of Laurent series in $z^{-1}$ with coefficients in $A$.

We write $\res[z]$ when want to emphasize $z$. 

A quasi-monic rational homogeneous function in $z$ and $z'$ with coefficients in
$A$ is a quotient $f(z,z')/g(z,z')$
for quasi-monic homogeneous polynomials $f(z,z')$ and $g(z,z')$
in $z$ and $z'$ with coefficients in $A$.

Let $A$ be a commutative graded $\cor$-algebra such that $A_{\le 0}=\cor$.
For homogeneous indeterminates $z_k$ ($k=1,\ldots,r$) of positive degree,
we denote by $\shm(z_1,\ldots,z_r)\subset A[z_1,\ldots,z_r]/\cor^\times$
the set of homogeneous polynomials in $z_1,\ldots 
z_r$ with
coefficients in $A$ and quasi-monic in each $z_1$,\ldots $z_k$
modulo $\cor^{\times}$. 
Then we can define
$$\res[z_k]\cl \shm(z_1,z_2,\ldots, z_k)
\times\shm(z_k,\ldots,z_r)\to\shm(z_1,\ldots,z_{k-1},z_{k+1},\ldots,z_r).$$
The homogeneous degree 
of $f\res[z_k ] g$ for $f\in \shm(z_1,z_2,\ldots, z_k)$ and $g\in \shm(z_k,\ldots,z_r)$ is  given by
$\dfrac{\deg(f)\deg(g)}{\deg(z_k)}$, 
where $\deg(z_k)$, $\deg(f)$, and $\deg(g)$ denote the homogeneous degrees of $z_k$, $f$ and $g$, respectively.

In particular, if $z$ and $w$ be indeterminates with the same degree,
$\shm(z,w)$ has a structure of semiring 
(the existence of additive inverse is not assumed):
the addition in $\shm(z,w)$ is the product in $\cor[z,w]$, and the multiplication in $\shm(z,w)$ is given as follows: the  multiplication  $h(z,w)$ 
of $f(z,w)$ and $g(z,w)$ is given by 
$h(z_1,z_3)\equiv f(z_1,z_2)\res[z_2]g(z_2,z_3)$.
We call $\shm(z,w)$ the resultant algebra. The multiplication in
$\shm(z,w)$ is commutative. 
Let us denote by $$\deg\cl\shm(z,w) \to\Z $$ the map of taking the homogeneous degree.
Then $f\mapsto\deg(f)/\deg(z)$ is a semiring homomorphism.

\subsection{Affinizations of invariants for $R$ -modules}
\label{subsec:Inv}

Let $R$ be a quiver Hecke algebra as in section \ref{Sec:QHSW}.
For $i\in I $ and $n\in \Z_{>0}$,  let 
 $P(i^{n})$  be the indecomposable  projective $R(n \alpha_i)$-module
whose head is isomorphic to $L(i^n)$.
Then,  for an $R(\beta)$-module $M$ we define
\eqn&&
\ba{ll}
E_i^{(n)} M& \seteq \HOM_{R(n\alpha_i)} \bl P(i^{n}),\, e(n\alpha_i, \beta - n\alpha_i) M\br \in   \Mod(R(\beta-n\al_i)), \\
F_i^{(n)} M& \seteq  P(i^{n}) \conv M \in \Mod(R(\beta+n\al_i)). 
\ea
\eneqn
The functors $E_i^{(1)}$ and $F_i^{(1)}$ will be denoted by $E_i$ and $F_i$.

For $i \in I$ and a non-zero  $R(\beta)$-module $M$,  we define
\begin{align*}
\wt(M) = - \beta, \ \   \ep_i(M) = \max \{ k \ge 0 \mid E_i^k M \not\simeq 0 \}, \ \  \ph_i(M) = \ep_i(M) + \langle h_i, \wt(M) \rangle.
\end{align*}
We also define $E_i^*$, $F_i^*$, $\ep^*_i$, etc.\ in the same manner as above by replacing $e(n\alpha_i, \beta-n\al_i)$, $P(i^n)\conv -$,
etc.\  with 
$e(\beta-n \al_i, n\alpha_i)$, $- \conv P(i^n)$, etc.

\Def
Let $i\in I$ and let $M$ be an $R(\beta)$-module.
Set $m=\eps_i(M)$,  $m'=\eps^*_i(M)$, $n=\height{\beta}$.
We set
\eq\hs{3ex}
&&\left\{\hs{-10ex}
\ba{rcl}
\chi_i(M)\bl t_i\br&=&\Bigl(\sum_{\nu\in I^\beta}\prod_{\nu_k=i}(t_i-x_k)e(\nu)\Bigr)
\bigm|_M
\in \tEnd_R(M)[t_i],\\[.5ex]
\bchi_i(M)\bl t_i\br&=&(t_i-x_1)\cdots (t_i-x_m)\bigm|_{\E{i}^{(m)}M}
\in \tEnd_R(\E{i}^{(m)}M)[t_i],\\[.5ex]
\akew[10ex]\bchis_i(M)\bl t_i\br&=&(t_i-x_{n-m'+1})\cdots (t_i-x_n)\bigm|_{\Es{i}{}^{(m')}M}
\in \tEnd_R(\Es{i}{}^{(m')}M)[t_i].
\ea\right.
\eneq \label{Eq: lift of chiE}
\edf
Note that, with the assignment $\deg(t_i)=(\al_i,\al_i)$, the monic polynomials
$\chi_i(M)$, $\bchi_i(M)$ and $\bchis_i(M)$ are homogeneous.

For an \afn $(\Ma,\z)$ of a real simple module, we have
$$\chi_i(\Ma),\bchi_i(\Ma),\bchis_i(\Ma)\in\cor[t_i,\z],$$ 
and they satisfy
$$\chi_i(\Ma)\equiv \bchi_i(\Ma)\cdot\chi_i\bl\E{i}^{(m)}\Ma\br
\equiv\bchis_i(\Ma)\cdot\chi_i\bl\Es{i}{}^{(m')}\Ma\br,$$
where $m=\eps_i(\Ma)$ and $m'=\eps^*_i(\Ma)$ (see \cite[Lemma 2.7 (i)]{KP18} and \cite[Lemma 3.3]{KKOP21}).

Recall that for quasi-monic polynomials $f$ and $g$, ``$f\equiv g$'' means
``$f\equiv g\ \mathrm{mod}\ \cor^\times$''.

\Lemma\label{lem:Rtau} 
Let $M\in \Mod(R(\beta))$, $N\in \Mod(R(\gamma))$
with $\beta,\gamma\in\prtl$, $m=\height{\beta}$, $n=\height{\gamma}$.
Then for $u\in M$ and $v\in N$, we have
$$\Runi_{M,N}(u\etens v)
\in \tau_{w[n,m]} \prod_{i\in I}\chi_i(N)\res[t_i]\chi_i(M)\bl v\etens u\br
+\sum_{w<w[n,m]}\tau_w(N\etens M).$$
\enlemma
\Proof
By \cite[(1.17)]{K^3},  we have
$$\Runi_{M,N}(u\etens v)
\in \tau_{w[n,m]}\sum_{\nu \in I^{\beta+\gamma}}\prod\limits_{\substack{1\le a\le n<b\le n+m,\\\nu_a=\nu_b}}(x_a-x_b) e(\nu)\bl v\etens u\br
+\sum_{w<w[n,m]}\tau_w(N\etens M).$$
\QED

\Def\label{def:inv}
 Let $(\Ma,\z)$ be an affinization of a real  simple module $M$, and
let $(\Na,\zN)$ be an affinization of a real simple $N$.
\bna
\item
Recall that  $\Daf(\Ma,\Na)\in\cor[\z,\zN]$ the quasi-monic polynomial given by
$$ \Rre_{\Na,\Ma}\circ\Rre_{\Ma,\Na}=\Daf(\Ma,\Na)\id_{\Ma\,\circ\,\Na}.$$
\item Denote by $\tLaf(\Ma,\Na)\in \cor[\z,\zN]$ the quasi-monic polynomial given by
\eqn
\Rre_{\Ma,\Na}(u\etens v)
&\in&\tLaf(\Ma,\Na)\tau_{w[n,m]}(v\etens u)
+\sum_{w<w[n,m]}\tau_w(\Na\etens \Ma)
\eneqn
for $u\in\Ma$ and $v\in\Na$. \label{itemtLa} 
Note that by Proposition \ref{pro:onedimhom}, 
 $\Rre_{\Ma,\Na}$ is proportional to $\Runi_{\Ma,\Na}$ and hence such a quasi-monic polynomial $\tLaf(\Ma,\Na)$ exists by Lemma \ref{lem:Rtau}.

\item We set
\eqn
\wtaf(\Ma,\Na)&\seteq&\dfrac{\prod_{i\in I}\bl\chi_i(\Ma)\res[{t_i}]\chi_i(\Na)\br^2}%
{\prod_{i\not=j}\chi_i(\Ma)\res[{t_i}]Q_{i,j}(t_i,t_j)\res[{t_j}]\chi_j(\Na)}
\in\cor(\z,\zN)/\cor^\times,\\
\Laf(\Ma,\Na)&\seteq&
\tLaf(\Ma,\Na)^2/\wtaf(\Ma,\Na)\in\cor(\z,\zN)/\cor^\times.
\eneqn
\ee
\edf

Note that $\wtaf(\Ma,\Na) \equiv \wtaf(\Na,\Ma)$.

For simples $M,N$ in $R\gmod$ such that one of them is \afr, define
$$\tLa(M,N):=\dfrac{1}{2}\bl \La(M,N)+(\wt(M),\wt(N)) \br.$$

The homogeneous degrees of these invariants are given as follows:
\eqn
\deg\bl\chi_i(\Ma)\br&=&n_i(\al_i,\al_i)\qt{where 
$\wt(M)=-\smash{\sum\nolimits_{i\in I}}n_i\al_i$,}\\
\deg\bl\bchi_i(\Ma)\br&=&\eps_i(M)(\al_i,\al_i)=2\tLa(L(i),M),\\
\deg\bl\bchis_i(\Ma)\br&=&\eps^*_i(M)(\al_i,\al_i)=2\tLa(M,L(i)),\\
\deg\bl\Daf(\Ma,\Na)\br&=&2\de(M,N),\\
\deg\bl\tLaf(\Ma,\Na)\br&=&2\tLa(M,N),\\
\deg\bl\Laf(\Ma,\Na)\br&=&2\La(M,N),\\
\deg\bl\wtaf(\Ma,\Na)\br&=&2\bl\wt(M),\wt(N)\br.
\eneqn
Indeed,  the fifth equality is shown in \cite[Lemma 3.11]{KKOP21}.  For the last equality we have
\eqn
&&\deg(\wtaf(\Ma,\Na))\\
&& = \sum_{i\in I} \left( 2 \deg(\chi_i(\Ma)) \deg(\chi_i(\Na)) / \deg(t_i) \right)
-\sum_{i\neq j}  
\left(  \deg(\chi_i(\Ma)) \deg(\chi_j(\Na)) \dfrac{\deg(Q_{i,j}(t_i,t_j))}{ \deg(t_i)\deg(t_j)} \right) \\
&&=2\sum_{i,j\in I} \left( \dfrac{\deg(\chi_i(\Ma))}{(\al_i,\al_i)} \dfrac{\deg(\chi_j(\Na))}{(\al_j,\al_j)} (\al_i,\al_j) \right)
=2\bl\wt(\Ma),\wt(\Na)\br.
\eneqn
The others follow directly from their definitions.    See also \cite[Corollary 3.8]{KKOP18} for the second and the third equalities.
  
Therefore, these invariants are affinizations of
$\wt$, $\de$, $\tLa$, etc.

By Lemma~\ref{lem:Rtau} and Definition~\ref{def:inv} \eqref{itemtLa},
one has
\eq \label{eq:ratio_Runi_Rre}
\Runi_{\Ma,\Na}&=&\dfrac{\ \prod_{i\in I}\chi_i(\Ma)\res[{t_i}]\chi_i(\Na)\ }%
{\tLaf(\Ma,\Na)}\Rre_{\Ma,\Na}.\label{R,Rnorm}
\eneq

\Lemma\label{lem:relaff}We have
\eqn&&\tLaf(\Ma,\Na)\cdot\tLaf(\Na,\Ma)\equiv\Daf(\Ma,\Na)\cdot\wtaf(\Ma,\Na),\\
&&\Daf(\Ma,\Na)^2\equiv\Laf(\Ma,\Na)\cdot\Laf(\Na,\Ma).
\eneqn
\enlemma
\Proof
By \cite[Proposition~1.10 (iv)]{K^3}
for $u\in\Ma$ and $v\in\Na$, we have
\eq \label{eq:RNMRMN}
\Runi_{\Na,\Ma}\Runi_{\Ma,\Na}(u\etens v)
&&=\sum_{\substack{
1\le a\le m<b\le m+n,\\[.2ex]
\nu\in I^{\beta+\gamma},\,\nu_a\not=\nu_b}}
Q_{\nu_a,\nu_b}(x_a,x_b)e(\nu)\bl u\etens v\br\\ \nonumber
&&=\Bigl(\prod_{i\not=j}\chi_i(\Ma)\res[{t_i}]Q_{i,j}(t_i,t_j)
\res[{t_j}]\chi_j(\Na)\Bigr)(u\etens v).
\eneq
On the other hand, \eqref{R,Rnorm} implies
\eqn
\Runi_{\Na,\Ma}\Runi_{\Ma,\Na}(u\etens v)
&&=\dfrac{\prod_{i\in I}\bl\chi_i(\Ma)\res[{t_i}]\chi_i(\Na)\br^2}%
{\tLaf(\Ma,\Na)\tLaf(\Na,\Ma)}\Rre_{\Na,\Ma}\Rre_{\Ma,\Na}(u\etens v)\\
&&=\dfrac{\prod_{i\in I}\bl\chi_i(\Ma)\res[{t_i}]\chi_i(\Na)\br^2}%
{\tLaf(\Ma,\Na)\tLaf(\Na,\Ma)}\Daf(\Ma,\Na)(u\etens v).
\eneqn
Hence, we obtain
$$\prod_{i\not=j}\chi_i(\Ma)\res[{t_i}]Q_{i,j}(t_i,t_j)
\res[{t_j}]\chi_j(\Na)=\dfrac{\prod_{i\in I}\bl\chi_i(\Ma)\res[{t_i}]\chi_i(\Na)\br^2}%
{\tLaf(\Ma,\Na)\tLaf(\Na,\Ma)}\Daf(\Ma,\Na),$$
which implies the desired results.
\QED

\Lemma \label{lem:prodtL}
Let $(\Laa, w)$, $(\Ma,z)$, $(\Na,z)$, $(\Xa,z)$ be  affinizations of simples $L,M,N,X$  in $R\gmod$, respectively, and assume that $L$ is real. 
If there is an epimorphism 
$\Ma \conv[z] \Na \epito \Xa$ in $\Proc(\cor[z], R\gmod)$, then there exists $a \in \cor[z,w] \setminus\{0\}$  such that the diagram below is commutative.
\eqn
\xymatrix{
\Laa \conv \Ma \conv \Na  \ar_{\Rre_{\Laa,\Ma}}[r] \ar@{->>}[d] & \Ma \conv \Laa \conv \Na \ar_{\Rre_{\Laa,\Na}}[r] & \Ma \conv \Na \conv \Laa \ar@{->>}[d]\\
\Laa \conv \Xa \ar^{a \, \Rre_{\Laa,\Xa}}[rr]&&  \Xa \conv \Laa.
}
\eneqn
In particular, we have $\tLaf(\Laa, \Ma) \tLaf(\Laa,\Na) = a \tLaf(\Laa, \Xa)$. 
If we further assume that $\La(L,X)=\La(L,M)+\La(L,N)$, then $a\equiv 1$ in $\cor[z,w]$.
\enlemma
\Proof
Since $\HOM_{\cor[z,w]}(\Laa\conv\Xa, \Xa\conv\Laa) = \cor[z,w]  \Rre_{\Laa,\Xa}$ by Proposition \ref{pro:onedimhom}, such an $a \in \cor[z,w] $ exists.  The other assertions are immediate.
\QED
 
\Lemma \label{lem:tLaii}
Let $i\in I$,  $M\in R(m\al_i)\gmod$, and $N\in R(n\al_i)\gmod$. 
Assume that $(\Ma,\z)$ and $(\Na,\zN)$  are affinizations of $M$ and $N$ respectively.
Then $\Rre_{\Ma,\Na} = \Runi_{\Ma,\Na}$ and $\tLaf(\Ma,\Na)\equiv \chi_i(\Ma) \res[t_i] \chi_i(\Na)$.
\enlemma
\begin{proof}
By \eqref{eq:RNMRMN},  $\Runi_{\Ma,\Na} \circ \Runi_{\Na,\Ma} = \id$ and hence we have $\Rre_{\Ma,\Na} = \Runi_{\Ma,\Na}$. 
By \eqref{eq:ratio_Runi_Rre}, we have $\tLaf(\Ma,\Na)\equiv \chi_i(\Ma) \res[t_i] \chi_i(\Na)$.
\end{proof}

\Prop
Let $\Ma$ be an affinization of a simple module $M$ in $R\gmod$.
Then for each $i\in I$, we have 
\eq
\ba{rcl}
\bchi_i(\Ma)&&\equiv\tLaf(\tL(i)_{t_i},\Ma),\\[.5ex]
\bchis_i(\Ma)&&\equiv\tLaf(\Ma,\tL(i)_{t_i}),\\[1ex]
\wtaf(\tL(i)_{t_i},\Ma)&&\equiv\dfrac{\chi_i(\Ma)^2}
{\prod_{j\not=i}Q_{i,j}(t_i,t_j)\res[{t_j}]\chi_j(\Ma)}\;.
\ea
\eneq
Here $\tL(i)_{t_i}$ is the affinization of $L(i)$ given by 
$R(\al_i)$ with $t_i=x_1$.
\enprop
\Proof

Let $m:=\eps_i(M)$, $M_0:=E_i^{(m)}M$, and $\Ma_0:=E_i^{(m)}\Ma$ such that $\Ma_0$ is an affinization of $M_0$. 
There exists an affinization $\Laa$  of $L(i^m)$ such that $\chi_i(\Laa) = \bchi_i(\Ma)$ by \cite[Proposition 3.6]{KKOP21}. 
Then we have an epimorphism $\Laa \conv \Ma_0 \epito \Ma$ due to the presentation of $\Laa$ in \cite[Remark 3.8]{KKOP21}. 
Hence  we have
\eqn
\tLaf(\ang{i}_{t_i}, \Ma)  \equiv \tLaf(\ang{i}_{t_i}, \Laa) \, \tLaf (\ang{i}_{t_i}, \Ma_0)   \equiv  \tLaf(\ang{i}_{t_i}, \Laa) =\chi_i(\Laa) =\bchi_i(\Ma).
\eneqn
Here the first equality follows from \cite[Corollary 2.12]{KKOP22} and Lemma \ref{lem:prodtL} together with  the fact that $\ang{i}$ and $M_0$ are unmixed.
The  second equality follows from Lemma \ref{lem:unmixed} and the third equality follows from Lemma \ref{lem:tLaii}.

The second assertion can be proved in a  similar way, and
the last assertion is immediate.
\QED

\Prop
Let $i\in I$, and let  $(\Ma,\z)$ be an affinization of a simple module $M$.
Set $m=\eps_i(M)$ and $\Ma_0=E_i^{(m)}\Ma$. 
Then we have
\eqn
&&
\Daf\bl \Ma,\tL(i)_{t_i}\br\chi_i(\Ma)\chi_i(\Ma_0)
\equiv\bchis_i(\Ma)\prod_{j\not=i} Q_{ij}(t_i,t_j)\res[t_j]\chi_j(\Ma_0).
\eneqn
\enprop

\Proof
Set $\Na=\tL(i)_{t_i}$. Then
Lemma~\ref{lem:relaff} implies
\eqn
&&\tLaf(\Ma,\Na)\cdot\tLaf(\Na,\Ma)\equiv\Daf(\Ma,\Na)\cdot\wtaf(\Ma,\Na),
\eneqn
which reads as 
\eqn\bchi_i(\Ma)\bchis_i(\Ma)
&&\equiv\Daf(\Ma,\Na)\cdot\dfrac{\chi_i(\Ma)^2}
{\prod_{j\not=i}Q_{i,j}(t_i,t_j)\res[{t_j}]\chi_j(\Ma)}\\
&&\equiv\Daf(\Ma,\Na)\cdot\dfrac{\chi_i(\Ma)\bchi_i(\Ma)\chi_i(\Ma_0)}
{\prod_{j\not=i}Q_{i,j}(t_i,t_j)\res[{t_j}]\chi_j(\Ma_0)}\;.\eneqn

Hence, we obtain the desired result.
\QED

\Cor\label{cor:XX}
Let $i\in I$, and let  $(\Ma,\z)$ be an affinization of a simple module $M$.
Set $m=\eps_i(M)$ and $\Ma_0=E_i^{(m)}\Ma$.  We assume further that
$\de_i(M)=0$, i.e., $M$ and $L(i)$ commute.
\ro Note that $\de_i(M)\seteq\eps_i(M)+\eps^*_i(M)+\ang{h_i,\wt(M)}$.\rf 
Then we have
\eqn
&&\chi_i(\Ma)\chi_i(\Ma_0)\equiv\bchis_i(\Ma)\prod_{j\not=i}
 Q_{ij}(t_i,t_j)\res[t_j]\chi_j(\Ma_0).
\eneqn
\encor
\Proof
It follows from
$\de(M,L(i))=0$, which implies $\Daf(\Ma,\tL(i)_{t_i})\equiv1$.
\QED

We say that an ordered pair $(M,N)$ of $R$-modules is {\em unmixed}
if $$\sgW(M)\cap\gW(N)\subset\{0\}.$$ 

\Lemma \label{lem:unmixed}
Let $\Ma$ and $\Na$ be affinizations in $R\gmod$ of real simple modules $M$ and $N$ respectively.
If $(M,N)$ is unmixed, then
$\tLaf(\Ma,\Na)\equiv1$.
\enlemma 
\Proof
By \cite[Corollary 2.12 ]{KKOP22}, we have $\tLa(M,N) = 0$ and hence $\tLaf(\Ma,\Na)\equiv1$.
\QED

\subsection{Examples of affinizations}
The following result will be used later.
\Lemma\label{lem:ij}
 Let $c\in\Z_{>0}$.
Assume that $Q_{i,j}(u,v)=v-u^c$
with $\ang{h_i,\al_j}=-c$ and $\ang{h_j,\al_i}=-1$.

For $m\in\Z$ such that $0\le m\le c$, let
$L=\ang{i^m}$.
Let $(\dL,z)$ be an affinization of $L$ with $\deg(z)=(\al_i,\al_i)$ such that
$\chi_i(\dL)\bl t_i\br=f(t_i,z)$ where $f(t_i,z)$ is quasi-monic with degree
$m$ in $t_i$ and also in $z$ \ro see \cite[Proposition 3.6]{KKOP21}\rf.
\bnum
\item $M\seteq L\hconv\ang{j}\simeq L\tens \ang{j}$ as an $R(m\al_i)\etens R(\al_j)$-module. Moreover, $\tau_{m}$ acts by $0$ on $L\hconv\ang{j}$.
\item
If $f(t_i,z)$ divides $Q_{i,j}(t_i,z^c)$, then
$\dM\seteq\dL\tens_{\cor[z]}\ang{j}_{z^c}$ has a structure of $R(m\al_i+\al_j)$-module
with $\tau_m=0$.
Moreover $\dM$ is an affinization of $M$.
\ee
\enlemma
\Proof
(i) We can check easily the defining relation of $R(m\al_i+\al_j)$ on $M$.
In particular, the  relation $\tau_m^2=Q_{i,j}(x_m,x_{m+1})$ follows from
$x_m^m\vert_L=0$.

\snoi
(ii) We can check easily the defining relation of $R(m\al_i+\al_j)$ on $\dM$.
In particular, the relation $\tau_m^2=Q_{i,j}(x_m,x_{m+1})$ is a consequence of
$Q_{i,j}(x_m,x_{m+1})\vert_\dM=0$ which follows from
$f(x_m,z)\vert_\dL=0$ and $Q_{i,j}(x_m,x_{m+1})=Q_{i,j}(x_m,z^c)$
on $\dM$.

\QED

\section{Saito Reflection functors for $R \gmod$   via localization} \label{Sec: reflection}
\subsection{$\catC_{s_iw_0}$  and  $\catC^*_{s_iw_0}\,$ }

\emph{
In the sequel, 
we assume that $\cmA=\st{\sfc_{i,j}}_{i,j\in I} $ is a Cartan matrix of finite type such that
$\cmA$ is not of type $A_1$.}

Let $w_0$ be the longest element of the Weyl group $\weyl$.
Let $i\mapsto i^*$ be the involution on $I$ determined by $w_0(\al_i)=-\al_{i^*}$.

Throughout this section,  we fix $i\in I$.  

We have
\eqn
\catC_{s_iw_0}=\st{M\in R\gmod\bigm|\E{i}M \simeq 0},\\
\catC^*_{s_iw_0}=\st{M\in R\gmod\bigm|\Es{i}M \simeq 0},
\eneqn
where $\E{i}$ and $\Es{i}$ denote the functor  $R(\beta)\gmod \to R(\beta-\al_i)\gmod$ given by  $M \mapsto e(\al_i,\beta-\al_i) M$ 
and $M \mapsto e(\beta-\al_i, \al_i) M$, respectively.

Note that $\catC_{s_iw_0}\simeq\catC^*_{s_iw_0}\simeq0$ in case $\cmA$ is of type
$A_1$.

Then
$\st{\dM(s_iw_0\La,\La)}_{\La\in\pwtl}$ is a family of central objects in
$\catC_{s_iw_0}$, and
$\st{\dM(w_0\La,s_i\La)}_{\La\in\pwtl}$ is a family of central objects  in
$\catC^*_{s_iw_0}$.
Note that
$$\dM(s_iw_0\La,\La)=\E{i}^{(\ang{h_{i^*},\La})}\dM(w_0\La,\La)\qtq
\dM(w_0\La,s_i\La)=\Es{i}{}^{(\ang{h_i,\La})}\dM(w_0\La,\La).
$$

Set $\Ctr_\La\seteq\dM(s_iw_0\La,\La)$ and $\Ctr^*_\La\seteq\dM(w_0\La,s_i\La)$
for $\La\in\pwtl$.
We set $\Ctr_j=\Ctr_{\La_j}$ and $\Ctr^*_j=\Ctr^*_{\La_j}$.
Since we have
$$\psi_*\bl\dM(v\La,v'\La)\br\simeq\dM(-v'\La,-v\La)$$
for any $\La\in\pwtl$ and any $v,v'\in\weyl$ such that $v\ge v'$
(\cite[Lemma 2.23]{KKOP22}),
we have
$$\psi_*(\Ctr_j)\simeq\Ctr^*_{j^*}\qt{for any $j\in I$.}$$
(For $\psi$, see \eqref{def:psi}.)
Then we have (see \cite[\S\,5.1]{KKOP21} and  \cite[Corollary 3.8]{KKOP18})
\eqn&&\ba{l}
\La(\Ctr_\La,\ang{j})=(s_iw_0\La+\La,\al_j),\\
\La(\ang{j} , \Ctr^*_\La)=-(w_0\La+s_i\La,\al_j),
\ea\hs{3ex}\qt{for any $j\in I$.}
\eneqn

We have
\eqn
\Qt_{s_iw_0}\bl \ang{i}\br
&&\simeq\Ctr_{i^*}^{\circ-1}\conv\bl\Ctr_{i^*}\hconv \ang{i}\br,\\
\Qt^*_{s_iw_0}\bl \ang{i}\br
&&\simeq\bl\ang{i}\hconv \Ctr^*_{i}\br\conv \Ctr^*_{i}{}^{\circ-1}.
\eneqn

\subsection{Schur-Weyl datum}
For any $j\in I$, define an object of $\tcatC^*_{s_iw_0}$:
\eqn
\SW_j&\seteq
\bc
\D\bl\Qt_{s_iw_0}^*(\ang{i})\br\simeq(\E{i}\Ctr^*_{i^*})\conv{\Ctr^*_{i^*}}{}^{\circ-1}&\text{if $j=i$,}\\
\dM(s_is_j\La_j,\La_j)\simeq \ang{i^{-\sfc_{i,j}}}\hconv \ang{j}&\text{if $j\not=i$.}
\ec
\eneqn
Recall that $\D$ is the right dual functor.
Note that
$\SW_j\simeq \ang{i^{-\sfc_{i,j}}}\tens \ang{j}$ as a vector space for $j\not=i$ since $\Es{k}(\SW_j)\simeq0$ for any $k\in I\setminus\st{j}$.

Since $\ang{i}=L(i)=R(\al_i)/R(\al_i)x_1$ and
$\ang{i^{-\sfc_{i,j}}}\hconv \ang{j}\simeq \dM(s_is_j\La_j,\La_j)$ 
have the canonical affinizations
 by  \cite[Theorem 3.26]{KKOP21},  
we have the following affinizations of $\SW_j$'s by Lemma \ref{lem:dualAff} and Lemma~\ref{lem:extcenter}. 
\begin{align} \label{Eq: aff hK}
\hSW_j&\seteq
\bc
\Da\bl\Qt_{s_iw_0}^*(\ang{i}_{z_i})\br&\text{if $j=i$,}\\
\bl \ang{i^{-\sfc_{i,j}}}\hconv \ang{j}\br_{z_j}&\text{if $j\not=i$.}
\ec
\end{align}
Here the functor $\Qt_{s_iw_0}^* : \Pro(R\gmod) \to \Pro(\tcatC^*_{s_iw_0})$ is induced by $\Qt_{s_iw_0}^* : R\gmod \to \tcatC^*_{s_iw_0}$.
Note that $\deg z_j=(\al_j,\al_j)$ for any $j\in I$.
For $j\in I\setminus\st{i}$, we have
\eq \label{eq:chiK}
\chi_k\bl \hSW_j\br(t_k)\equiv
\bc t_j-z_j&\text{if $k=j$,}\\
Q_{i,j}(t_i,z_j)&\text{if $k=i$,}\\
1&\text{otherwise.}
\ec
\eneq
Note that $\chi_i(\hSW_j)\equiv Q_{i,j}(t_i,z_j)$ follows also from
Corollary~\ref{cor:XX}.

We have
$$\E{i}\Ctr^*_{i^*}\simeq\dM(s_iw_0\La_{i^*},s_i\La_{i^*}).$$
(Note that $s_i\le s_iw_0$ since $\g\not=A_1$.)

Note that
$$\La\bl \ang{j},\ang{k}\br=
-(\al_j,\al_k)\delta(j\not=k)\qt{for any $j,k\in I$.}$$

\Prop \label{prop:LaKK}
For any $j,k\in I$, we have
$$\La(\SW_j,\SW_k)=\La\bl \ang{j},\ang{k}\br$$
except in the case $\cartan$ is of type $A_2$ and  $j\not=k=i$.
In the last case, we have $\La\bl \SW_j,\SW_k\br=-1$.
In particular, we have
$$\de(\SW_j,\SW_k)=\de\bl \ang{j},\ang{k}\br$$
unless $\cartan$ is of type $A_2$.
\enprop
\Proof
When $\sfC$ is of type $A_2$, we can prove directly.
Hence we assume that $\sfC$ is not of type $A_2$.

\snoi
(i)\ Let $j,j'\in I\setminus\st{i}$ satisfy $j\not=j'$.
Set $a=-\ang{h_i,\al_j}$ and $a'=-\ang{h_i,\al_{j'}}$.
Then we have
\eqn
\La(\SW_j,\SW_{j'})&&=\La\bl \ang{i^a}\hconv \ang{j},\ang{i^{a'}}\hconv \ang{j'}\br\\
&&\underset{(1)}=
\La\bl \ang{i^a}\hconv \ang{j},\ang{i^{a'}}\br
+\La\bl \ang{i^a}\hconv \ang{j},\ang{j'}\br\\
&&\underset{(2)}=
-\La\bl \ang{i^{a'}},\ang{i^a}\hconv \ang{j}\br
+\La\bl \ang{i^a}\hconv \ang{j},\ang{j'}\br\\
&&\underset{(3)}=
-\La\bl \ang{i^{a'}},\ang{j}\br
+\La\bl \ang{i^a},\ang{j'}\br
+\La\bl \ang{j},\ang{j'}\br\\
&&=
a'(\al_i,\al_j)
-a(\al_i,\al_{j'})
+\La\bl\ang{j} ,\ang{j'}\br\\
&&=\La\bl\ang{j} ,\ang{j'}\br.
\eneqn
Here, $\underset{(1)}=$ and $\underset{(2)}=$ follow from the fact that
$\ang{i^a} \hconv \ang{j}$ and $\ang{i^{a'}}$ commute,
The equality $\underset{(3)}=$ follows from the fact
$\bl\ang{i^a},\ang{j'}\br$ is unmixed and \cite[Corollary 2.3, Corollary 2.12]{KKOP22}.

\mnoi
(ii)\ For $j\in I\setminus\st{i}$,
let us show that $\La(\SW_j,\SW_i)=\La(\ang{j},\ang{i})$.
Set $a=-\ang{h_i,\al_j}$.
Since $\ang{i}$ commutes with $\SW_j$, we have
$$
\La(\SW_j,\Ctr_{i^*}^*)=\La(\SW_j,\ang{i}\hconv\E{i}\Ctr_{i^*}^*)=
\La(\SW_j,\ang{i})+\La(\SW_j,\E{i}\Ctr_{i^*}^*).$$
Hence
\eqn
\La(\SW_j,\SW_i)&&=\La(\SW_j,\E{i}\Ctr_{i^*}^*)-\La(\SW_j,\Ctr_{i^*}^*)\\
&&=-\La(\SW_j,\ang{i})=\La(\ang{i},\SW_j)\\
&&=\La(\ang{i},\ang{i^a}\hconv\ang{j})
=\La(\ang{i},\ang{j})=-(\al_i,\al_j)=\La(\ang{j},\ang{i}).
\eneqn

\snoi
(iii)\ For $j\in I\setminus\st{i}$,
let us show $\La(\SW_i,\SW_j)=\La(\ang{i},\ang{j})$.

Set $C=\Ctr^*_{i^*}$.
We have
$$
\bl\wt (\E{i}C),\wt(\SW_j)\br-\bl\wt (C),\wt(\SW_j)\br
=(\al_i,-s_i\al_j)=(\al_i,\al_j).$$
Hence we have
\eqn
&&\La(\SW_i,\SW_j)-\La(\ang{i},\ang{j})\\
&&\hs{3ex}=\La(\E{i}C,\SW_j)-\La(C,\SW_j)+(\al_i,\al_j)\\
&&\hs{3ex}=\Bigl(\La(\E{i}C,\SW_j)+\bl\wt (\E{i}C),\wt(\SW_j)\br\Bigr)
-\Bigl(\La(C,\SW_j)+\bl\wt (C),\wt(\SW_j)\br\Bigr)\\
&&\hs{3ex}=2\tLa(\E{i}C,\SW_j)-2\tLa(C,\SW_j)\\
&&\hs{3ex}=2\tLa(\E{i}C,\SW_j)-2\tLa(L(i)\hconv(\E{i}C) ,\SW_j)\\
&&\hs{3ex}\le0.\eneqn
Here the last inequality follows from
\cite[Theorem 2.11]{KKOP22}.
Thus, we have obtained
$$\La(\SW_i,\SW_j)\le\La(\ang{i},\ang{j}).$$
Hence it is enough to show 
\eq
\text{if $\La(\SW_i,\SW_j)<\La(\ang{i},\ang{j})$,
then $\sfC$ is of type $A_2$.}
\eneq

Assume that
$\La(\SW_i,\SW_j)<\La(\ang{i},\ang{j})$.
Then (ii) implies that
$$\de(\SW_i,\SW_j)<\de(\ang{i},\ang{j})=\delta\bl(\al_i,\al_j)<0\br\max(\sfd_i,\sfd_j).$$
Here $\sfd_k=(\al_k,\al_k)/2$.
Hence we have $(\al_i,\al_j)<0$.
Since $\SW_i$ and $C$ are \afr of degree $2\sfd_i$, and
$\SW_j$ is \afr of degree $2\sfd_j$, we have
$\de(\SW_i,\SW_j)\in\Z\max(\sfd_i,\sfd_j)$.
Thus we obtain $\de(\SW_i,\SW_j)=0$.
Hence we have
$\de(E_iC,K_j)=0$. Hence, $\SW_j\conv(\E{i}C)$ is simple.
Set $c=-\ang{h_i,\al_j}>0$.
Then we have $\SW_j=\ang{i^c}\hconv\ang{j}$.
The composition of
$$(\ang{i^{c}}\hconv\ang{j})\conv(\E{i}C)\monoto(\ang{i^{c-1}}\hconv\ang{j})\conv\ang{i}\conv
(\E{i}C)\epito (\ang{i^{c-1}}\hconv\ang{j})\conv C$$
does not vanish by \cite[Lemma 3.1.5]{KKKO18}.
Since $(\ang{i^{c-1}}\hconv\ang{j})\conv C$ is also simple, we obtain
$$(\ang{i^{c}}\hconv\ang{j})\conv(\E{i}C)\simeq (\ang{i^{c-1}}\hconv\ang{j})\conv C.$$

Comparing their dimensions, we obtain
$$c\,!\cdot\dim(\E{i}C)\cdot\dfrac{(c+s)!}{(c+1)!\;(s-1)!}
=(c-1)!\cdot\dim(C)\cdot\dfrac{(c+s)!}{c!\;s!}\;.$$
Here, $s\seteq\height{\wt(C)}=\height{\La_{i^*}-w_0\La_{i^*}}-\delta_{i,i^*}$.
Note that $\ang{i^a}\tens \ang{j}\to\ang{i^a}\hconv\ang{j}$
is bijective if $0\le a\le-\ang{h_i,\al_j}$.

Since $\E{j}C\simeq0$ for any $j\in I\setminus\st{i}$, we obtain
$\dim(\E{i}C)=\dim C$.
Thus we obtain
$$s=\dfrac{c+1}{c}\;.$$
Hence we obtain $c=1$ and $s=2$.
Note that $\rank\, \g\le s+1=3$.
 Then we can easily check that if $ 2\le \rank(\g)\le3$ and $c=1$,  then $s\ge 3$ unless $\g=A_2$.
\QED

\Prop \label{Prop: De}
For any $j,k\in I$ such that $j\not=k$, we have
$$\Daf(\hSW_j,\hSW_k)\equiv Q_{j,k}(z_j,z_k)$$
except in the case $\cartan$ is of type $A_2$.
In the last case, we have $\Daf\bl \hSW_j,\hSW_k\br\equiv1$.
\enprop
\Proof
When $\sfC$ is of type $A_2$, we can prove directly.
Hence assume that $\sfC$ is not of type $A_2$.

\snoi
(i)\ Let $j,j'\in I\setminus\st{i}$ satisfy $j\not=j'$.
Since $(\SW_j,\SW_{j'})$ and $(\SW_{j'},\SW_j)$ are unmixed, by Lemma \ref{lem:unmixed} we have
\eqn
\tLaf(\hSW_j,\hSW_{j'})
\equiv\tLaf(\hSW_{j'},\hSW_j)
\equiv1.
\eneqn
By Lemma \ref{lem:relaff} we obtain
\eqn
\Daf(\hSW_j,\hSW_{j'})&&\equiv\tLaf(\hSW_j,\hSW_{j'})\cdot\tLaf(\hSW_{j'},\hSW_{j})
\cdot\wtaf(\hSW_j,\hSW_{j'})^{-1}\equiv\wtaf(\hSW_j,\hSW_{j'})^{-1}.
\eneqn
By \eqref{eq:chiK}
we have
$$
\wtaf(\hSW_j,\hSW_{j'})
=\dfrac{\left(\chi_i(\hSW_j)\res[t_i]\chi_i(\hSW_{j'}) \right)^{2 }}{A},$$
where
\eqn
A=\bl\chi_i(\hSW_j)\res[t_i]Q_{i,j'}(t_i,t_{j'})\res[{t_{j'}}]\chi_{j'}(\hSW_{j'})\br&&\cdot
\bl\chi_j(\hSW_j)\res[t_j]Q_{i,j}(t_i,t_j)\res[{t_{i}}]\chi_{i}(\hSW_{j'})\br\\
&&\hs{3ex}\cdot\bl\chi_j(\hSW_j)\res[t_j]Q_{j,j'}(t_j,t_{j'})\res[{t_{j'}}]\chi_{j'}(\hSW_{j'})\br.
\eneqn
On the other hand, we have
\eqn
Q_{i,j}(t_i,z_j)\res[t_i]Q_{i,j'}(t_i,z_{j'})&&=
\chi_i(\hSW_j)\res[t_i]\chi_i(\hSW_{j'})\\
&&=\chi_i(\hSW_j)\res[t_i]Q_{i,j'}(t_i,t_{j'})\res[{t_{j'}}]\chi_{j'}(\hSW_{j'})\\
&&=\chi_j(\hSW_j)\res[t_j]Q_{i,j}(t_i,t_j)\res[{t_{i}}]\chi_{i}(\hSW_{j'})
\eneqn
and
$$\chi_j(\hSW_j)\res[t_j]Q_{j,j'}(t_j,t_{j'})\res[{t_{j'}}]\chi_{j'}(\hSW_{j'})
=Q_{j,j'}(z_j,z_{j'}).$$
Hence we have
$$\wtaf(\hSW_j,\hSW_{j'})=Q_{j,j'}(z_j,z_{j'})^{-1}$$
and
$$\Daf(\hSW_j,\hSW_{j'})=Q_{j,j'}(z_j,z_{j'}).$$

\mnoi
(ii) It is obvious 
$\Daf(\hSW_i,\hSW_{j})\equiv1$ for $j\in I$ such that 
$(\al_i,\al_j)=0$, since $\de(\SW_i,\SW_{j})=0$.

\mnoi
(iii) Let us show
$\Daf(\hSW_i,\hSW_{j})\equiv Q_{i,j}(z_i,z_j)$ for $j\in I$ such that $
\ang{h_i,\al_j}=-1$.

In this case, we have $c\seteq\sfd_i/\sfd_j=-\ang{h_j,\al_i}\in\Z_{\ge1}$ and
we may assume that $Q_{i,j}(z_i,z_j)=z_i-z_j^c$.
Set $f(z_i,z_j)\seteq\Daf(\hSW_i,\hSW_{j})$.
Then we have $\deg f(z_i,z_j)=\deg Q_{i,j}(z_i,z_j)$ by Proposition \ref{prop:LaKK}.

Note that $\hSW_j \simeq \ang{i}_{z_j^c}\tens_{\cor[z_j]}\ang{j}_{z_j} $ as a $\cor[z_j]$-module on which $\tau_1$ acts by zero.  Hence it is a quotient of  $\ang{i}_{z_j^c}\conv[z_j] \ang{j}_{z_j} $, so that 
by Lemma \ref{lem:CVB},
 we have $\hSW_j\simeq \ang{i}_{z_j^c}\hconv_{z_j}\ang{j}_{z_j}
\monoto \ang{j}_{z_j}\conv[z_j]\ang{i}_{z_j^c}$.

We have
$$\hSW_j\conv[z_j]\bl\hSW_i\vert_{z_i=z_j^c}\br\monoto
\ang{j}_{z_j}\conv[z_j]\ang{i}_{z_j^c}
\conv[z_j]\bl\hSW_i\vert_{z_i=z_j^c}\br\epito\ang{j}_{z_j},$$
which implies an epimorphism
\eq \label{eq:KjKitoj}
\hSW_j\conv[z_j]\bl\hSW_i\vert_{z_i=z_j^c}\br\epito \ang{j}_{z_j},
\eneq
since it is so after operating $(\cor[z_j]/z_j\cor[z_j])\tens[{\cor[z_j]}]\scbul$. 

Hence Proposition~\ref{prop:defactor} implies that
$z_j-w$ divides $f(w^c,z_j)$, i.e., 
$f(w^c,w)=0$.  
Hence $Q_{i,j}(z_i,z_j)=z_i-z_j^c$ divides $f(z_i,z_j)$. 
Comparing their degrees, we have
$f(z_i,z_j)=Q_{i,j}(z_i,z_j)$ up to a constant multiple.

\mnoi
(iv) Let us show
$\Daf(\hSW_i,\hSW_{j})\equiv Q_{i,j}(z_i,z_j)$ for $j\in I$ such that $
-\ang{h_i,\al_j}>1$.

In this case, we have $c\seteq\sfd_j/\sfd_i=-\ang{h_i,\al_j}\in\Z_{>1}$ and
we may assume that $Q_{i,j}(z_i,z_j)=z_j-z_i^c$.
Set $f(z_i,z_j)\seteq\Daf(\hSW_i,\hSW_{j})$.
Then we have $\deg f(z_i,z_j)=\deg Q_{i,j}(z_i,z_j)$ 
by Proposition \ref{prop:LaKK}.

Set $\Ma=\hSW_j\vert_{z_j=w^c}$ with $\deg(w)=\sfd_i$.
 By replacing $\cor$ with its algebraic closure, we may assume that $\cor$ is algebraically closed. 
Let $\omega$ be the $c$-th primitive root of unity.
Then we have
$\Ma=\Laa\hconv_w\ang{j}_{w^c}$,
where 
\eq \hskip 3em \label{eq:LL'}
\Laa=\ang{i}_w\conv[w]\ang{i}_{\omega w}\conv[w]\cdots
\conv[w]\ang{i}_{\omega^{c-1} w}\simeq \ang{i}_w\conv[w]\Laa'
\qt{and} \quad
\Laa'=\ang{i}_{\omega w}\conv[w]\cdots
\conv[w]\ang{i}_{\omega^{c-1} w}. \eneq
Then we have
$\Ma=\ang{i}_{w}\hconv_{w}\bl\Laa'\hconv_w\ang{j}_{w^c}\br
\monoto \bl\Laa'\hconv_w\ang{j}_{w^c}\br\conv[w]\ang{i}_{w}$.

Hence we have
$$\Ma\conv[w]\bl\hSW_i\vert_{z_i=w}\br\monoto
\bl\Laa'\hconv_{w}\ang{j}_{w^c}\br\conv[w]\ang{i}_{w}\conv[w]
\bl\hSW_i\vert_{z_i=w}\br\epito\Laa'\hconv_{w}\ang{j}_{w^c}.$$
Then the composition is an epimorphism
\eq \label{eq:KjKiLpj}
\bl\hSW_j\vert_{z_j=w^c}\br\conv[w]\bl\hSW_i\vert_{z_i=w}\br\epito \Laa'\hconv_w\ang{j}_{w^c},\eneq
since it is so after operating $(\cor[w]/w\cor[w])\tens[{\cor[w]}]\scbul$.

 By  Lemma~\ref{lem:ij}, $\Laa'\hconv_w\ang{j}_{w^c}$ is an affinization of $\ang{i^{c-1}}\hconv\ang{j}$ which is real simple by Lemma~\ref{lem:2det} below. 
Hence, Proposition~\ref{prop:defactor} implies that 
$Q_{i,j}(z_i,z_j)=z_j-z_i^c$ divides $f(z_i,z_j)$.
Comparing their degrees, we have
$f(z_i,z_j)=Q_{i,j}(z_i,z_j)$.
\QED

\bigskip
Thus we have  proved that
the datum
$\st{\hSW_j}_{j\in I}$ in $\tcatC^*_{s_iw_0}$
satisfies condition \eqref{eq:SW}.
Choosing  a duality datum
$\bl\st{(\hSW_j,z_j)}_{j\in I},\st{\Rre_{\hSW_j,\hSW_k}}_{j,k\in I}\br$,
we can define  exact monoidal functors
$$\hF_i\cl \Modc(R)\to \Pro(\tcatC^*_{s_iw_0}) \qt{and}\quad
\F_i\cl R\gmod\to \tcatC^*_{s_iw_0}$$
sending
$\ang{j}_{z_j}$ to $\hSW_j$, except the $A_2$-cases. 
See Remark~\ref{rem:A2} for the  $A_2$-cases. 

Note that 
\eq&&\wt(\hF_i(X))=s_i\bl\wt(X)\br\qt{for any $X\in \Modc(R)$.}
\eneq

\Lemma \label{lem:Frneq0}
Let $M,N \in R\gmod$  be simple modules.
Assume that $\F_i(M)$ and $\F_i(N)$ are simples in $\tcatC^*_{s_iw_0}$.
Assume moreover that either $M$ and $\F_i(M)$ are \afr or
$N$ and $\F_i(N)$ are \afr.
\bnum
\item We have $\La(\F_i(M),\F_i(N)) \le \La(M,N)$.
\item The following conditions are equivalent.
\bna
\item $\La(\F_i(M),\F_i(N)) = \La(M,N)$,
\item $\F_i(M\hconv N)\neq 0$. \label{it:nonvan}
\end{enumerate}
\item Moreover, if condition \eqref{it:nonvan} holds, then $\F_i(\rmat{M,N}) \neq 0$ and 
$$ \F_i(M\hconv N) \simeq \F_i(M)\hconv \F_i(N).$$
\end{enumerate}
\enlemma
\begin{proof}
Assume that $M$ and $\F_i(M)$ are \afr.
Let $(\Ma,z)$ be an  affinization of $M$.
Since 
$$\Rre_{N,\Ma} \circ \Rre_{\Ma, N} =z^d \;\id_{\Ma\tens N},$$
we have a commutative diagram
\eqn
\xymatrix@C=4em{
\hF_i(\Ma) \conv \hF_i(N) \ar[r]_{\hF_i(\Rre_{\Ma, N} )} \ar@/^2em/[rr]^{{\hF_i(z)^d}} & \hF_i(N)  \conv \hF_i(\Ma) \ar[r]_{\hF_i(\Rre_{N,\Ma})} & \hF_i(\Ma) \conv \hF_i(N). 
}
\eneqn
Since $\hF_i(z)$ is a monomorphism,  
$\hF_i(\Rre_{\Ma_z, N} )$ and $\hF_i(\Rre_{N,\Ma_z})$ are non-zero.
Note also that $\dim\HOM\bl\F_i(M)\conv\F_i(N),\F_i(N)\conv\F_i(M)\br=1$
by Proposition~\ref{prop:dim1}. 
It follows that 
$$\hF_i(\Rre_{\Ma, N} ) = z^a\Rre_{\hF_i(\Ma),\hF_i( N)} \qt{and} \quad \hF_i(\Rre_{ N,\Ma} ) = z^b\Rre_{\hF_i( N), \hF_i(\Ma)} $$
for some $a,b \in \Z_{\ge0}$ up to constant multiples by Proposition \ref{pro:onedimhom} (ii). 

Note also
that $\hF_i(\Ma)$ is an \afn of $\F_i(M)$.
Hence we have
$$\La(M,N) =\deg(\Rre_{\Ma,N})= \deg(\hF_i(\Rre_{\Ma, N} )) \ge \deg(\Rre_{\hF_i(\Ma),\hF_i( N)})=\La(\hF_i(M),\hF_i(N)),$$
and the equality $\deg(\hF_i(\Rre_{\Ma, N} )) = \deg(\Rre_{\hF_i(\Ma),\hF_i( N)})$  holds if and only if $\deg(z^a)=0$; i.e.,  $a=0$.
In the case,  we have
$$\F_i(\rmat{M,N}) = \hF_i(\Rre_{\Ma, N}|_{z=0} ) =   (\hF_i(\Rre_{\Ma, N} ))|_{\hF_i(z)=0} = \rmat{\F_i(M),\F_i(N)} \neq 0$$
and hence $$M\hconv N\simeq  \Im \F_i(\rmat{M,N})  = \Im \rmat{\F_i(M),\F_i(N)} \simeq \F_i(M)\hconv \F_i(N),$$
which completes the proof.
\end{proof}

\Lemma\label{lem:2det}
Let $m,n\in\Z_{\ge0}$ and $j\in I\setminus\st{i}$,
and let $S=\ang{j^n}\hconv \ang{i^m}$. Set $\la=n\La_j-m\La_i$.
Then we have \ro neglecting the degree shifts\rf{\rm:}
\bnum
\item $s_j\la \preceq \la \preceq s_i\la$  and $S\simeq\dM(s_j\la,s_i\la)$,\label{ite:1}
\item $\eps_i(S)=\max\bl0,m+n\ang{h_i, \al_j}\br$, 
\item if $m\ge-n \ang{h_i, \al_j}$, then $\ang{i}$ commutes with
$S$ and
$\ang{j^n}\hconv \ang{i^m}\simeq \bl\ang{j^n}\hconv \ang{i^{-n\ang{h_i,\al_j}}}\br\conv\ang{i
^{m+n \ang{h_i, \al_j}}}$,
\item if $\ang{h_i,s_j\la}=-n \ang{h_i, \al_j}-m\ge0$, then
  $\F_i(S)\simeq \dM(s_is_j\la ,\la)\simeq
  \ang{i^{-m-n\ang{h_i,\al_j}}}\hconv \ang{j^n}$.
\ee
\enlemma
\Proof
(i)\ We have
$\ang{h_i,s_i\la}=m$, $\ang{h_j,\la}=n$ and
$s_j\la=(s_js_i)(s_i\la)$.
Hence we have $S\simeq\dM(s_j\la,s_i\la)$.

\mnoi
(ii) follows from 
\cite[Proposition 2.16 (iii)]{KKOP22}.

\mnoi
(iii) follows from the fact that $\psi_*\bl\dM(s_is_jn\La_j,n\La_j)\br
\simeq \ang{j^n}\hconv \ang{i^{-n\ang{h_i,\al_j}}}$ commutes with $\ang{i}$
by \cite[Proposition 2.16 (ii)]{KKOP22}.

\mnoi
(iv)\ 
We argue by induction on $m$.
If $m=0$, then
\eqn\F_i(S)=\F_i(\ang{j}^{\circ n})
&&\simeq\dM(s_is_j\La_j,\La_j)^{\circ n}\\
&&\simeq\dM(s_is_jn\La_j,n\La_j)\simeq\ang{i^{-n\ang{h_i,\al_j}}}\hconv \ang{j^n}.
\eneqn

Assume that $m>0$. Set $c=-\ang{h_i,\al_j}$. Then we have
\eqn
\F_i(S)&&\simeq \F_i\bl(\ang{j^n}\hconv\ang{i^{m-1}})\hconv\ang{i}\br\\
&&\underset{*}{\simeq}\F_i(\ang{j^n}\hconv\ang{i^{m-1}})\hconv\F_i(\ang{i})\\
&&\simeq\bl\ang{i^{1-m+nc}}\hconv\ang{j^n}\br\hconv\D\ang{i}\\
&&\simeq\bl\ang{i}\hconv(\ang{i^{ -m+nc}}\hconv\ang{j^n})\br\hconv\D\ang{i}\\
&&\simeq \ang{i^{-m+nc}}\hconv\ang{j^n}.
\eneqn
Here the second isomorphism $\underset{*}{\simeq}$ follows from Lemma \ref{lem:Frneq0} and the fact that
\eqn\La\bl\ang{j^n}\hconv\ang{i^{m-1}},\ang{i}\br
&&=\La\bl\ang{j^n},\ang{i} \br=-n(\al_j,\al_i) 
\eneqn 
is equal to
\eqn\La\bl\F_i(\ang{j^n}\hconv\ang{i^{m-1}}),\F_i(\ang{i})\br
&&=\La\bl \ang{i^{1-m+nc}}\hconv\ang{j^n},\D\ang{i}\br
\underset{(1)}{=}\La\bl\ang{i},\ang{i^{1-m+nc}}\hconv\ang{j^n}\br\\
&&=\La(\ang{i},\ang{j^n})=-n(\al_i,\al_j),
\eneqn
where $\underset{(1)}{=}$ follows from Lemma~\ref{lem:LaMDN}.
\QED

\Lemma 
For $w\in\weyl$ such that $s_iw>w$, we have
$$\F_i\bl\dM(w\La,\La)\br\simeq\dM(s_iw\La,s_i\La)\qt{for any $\La\in\pwtl$.}$$
In particular, we have
$$\F_i\bl\dC_{\La} \br\simeq \dC^*_{\La} \qt{for any $\La\in\pwtl$.}$$
\enlemma
\Proof
Let us argue by induction on $\ell(w)$.
If $\ell(w)\le1$, it is obvious.
Assume that $\ell(w)>1$.
Take $j\in I\setminus\st{i}$ such that
$s_jw<w$.

\mnoi
(i) Assume first that $s_is_jw>s_jw$.
Set $v=s_jw$ and $n=\ang{h_j,v\La}\in\Z_{\ge0}$ and
$m=\ang{h_i,v\La}\in\Z_{\ge0}$.
Then we have $w\La=v\La-n\al_j$ 
and hence
$ \ang{h_i,w\La}=m-n\ang{h_i,\al_j}\in\Z_{\ge0}$.
Then we have
$\dM(w\La,v\La)\simeq \ang{j^n}$ and  $\dM(s_iw\La,v\La)\simeq 
\ang{i^{m-n\ang{h_i,\al_j}}}\hconv\ang{j^n}$.
We have
\eqn
\dM(s_iw\La,s_iv\La)\hconv\dM(s_iv\La,v\La)&&\simeq
\dM(s_iw\La,v\La  ) \\
&&\simeq
\ang{i^{m-n\ang{h_i,\al_j}}}\hconv\ang{j^n}\underset{*}{\simeq} \bl \ang{i^{-n\ang{h_i,\al_j}}} \hconv \ang{j^n} \br\conv \ang{i^m}\\
&&\simeq \bl \ang{i^{-\lan h_i,\al_j \ran} }\hconv \ang{j} \br^{\circ n}\conv \dM(s_iv\La,v\La),
\eneqn
where $\underset{*}{\simeq}$ follows from Lemma~\ref{lem:2det}.
Hence we have 
$\dM(s_iw\La,s_iv\La)\simeq \bl \ang{i^{-\lan h_i,\al_j \ran} }\hconv \ang{j} \br^{\circ n}$, which implies that
$$\F_i\bl\dM(w\La,v\La)\br\simeq\dM(s_iw\La,s_iv\La).$$
On the other hand, \cite[Proposition 4.6]{KKOP18} implies that
both $\La\bl \dM(w\La,v\La),\dM(v\La,\La)\br$ and
$\La\bl \dM(s_iw\La,s_iv\La),\dM(s_iv\La,s_i\La)\br$ are equal to
$-(w\La-v\La, v\La-\La)$.
Therefore, by Lemma~\ref{lem:Frneq0}, we obtain
\eqn
\F_i\bl \dM(w\La,\La)\br&&\simeq \F_i\bl\dM(w\La,v\La)\hconv\dM(v\La,\La)\br\\
&&\simeq \F_i\bl\dM(w\La,v\La)\br\hconv\F_i\bl\dM(v\La,\La)\br\\
&&\simeq \dM(s_iw\La,s_iv\La)\hconv\dM(s_iv\La,s_i\La)\\
&&\simeq \dM(s_iw\La,s_i\La).
\eneqn

\mnoi
(ii) Assume that $s_is_jw<s_jw$.
Set $v=s_is_jw$.
Then we have $s_iv>v$, $ \ell(v) < \ell(w)$ and hence by induction hypothesis, 
$\F_i\bl\dM(v\La,\La)\br\simeq\dM(s_iv\La,s_i\La)$.
Note that 
$\dM(w\La,v\La)\simeq \ang{j^n}\hconv \ang{ i^m}$ where $n=\ang{h_j,s_iv\La}$,
$m=\ang{h_i,v\La}$. 

Since
$\La(\ang{j^n},\ang{ i^m})=-nm(\al_j,\al_i)$ and
$$\La\bl(\ang{i^{-\ang{h_i,\al_j}}}\hconv \ang{j})^{\circ n},\D(\ang{ i^m})\br
=\La\bl\ang{i^m},(\ang{i^{-\ang{h_i,\al_j}}}\hconv\ang{j})^{\circ n}\br
=-nm(\al_j,\al_i)$$
are equal, we have
\eqn
\F_i(\dM(w\La,v\La))&&\simeq
\bl \ang{i^{-\ang{h_i,\al_j}}}\hconv\ang{j}\br^{\circ n}
\hconv\D\ang{i^m}\\
&&\simeq \Bigl(\ang{i^m}\hconv \bl\ang{i^{-m-n\ang{h_i,\al_j}}}\hconv\ang{j^n}\br\Bigr)
\hconv\D\ang{i^m}\\
&&\simeq \ang{i^{-m-n\ang{h_i,\al_j}}}\hconv\ang{j^n}
\simeq\dM(s_iw\la,s_iv\La).\eneqn

Note that $\ang{h_i,w\La}=-m-n\ang{h_i,\al_j}\ge 0$.

Since $\La\bl \dM(w\La,v\La),\dM(v\La,\La)\br$ and $
\La\bl \dM(s_iw\La,s_iv\La),\dM(s_iv\La,s_i\la)\br$
are equal to $-(w\La-v\La, v\La-\La)$,
we have 
\eqn
\F_i\bl \dM(w\La,\La) \br&&\simeq \F_i\bl\dM(w\La,v\La)\hconv\dM(v\La,\La)\br\\
&&\simeq \F_i\bl\dM(w\La,v\La)\br\hconv\F_i\bl\dM(v\La,\La)\br\\
&&\simeq \dM(s_iw\La,s_iv\La)\hconv\dM(s_iv\La,s_i\La) \\ 
&&\simeq \dM(s_iw\La,s_i\La).
\eneqn
\QED

\Lemma \label{lem:FiM}
Let $w,v \in\weyl$ such that $v\le w$ and $s_iv>v$,  $s_iw>w$.
Then, we have
$$\F_i\bl\dM(w\La,v\La)\br\simeq\dM(s_iw\La,s_iv\La)\qt{for any $\La\in\pwtl$.}$$
\enlemma
\Proof
By the preceding lemma, we have
\eqn
\dM(s_iw\La,s_iv\La)\hconv \dM(s_i v\La,s_i\La)&&\simeq
\dM(s_iw\La,s_i\La)\\
&&\simeq\F_i\bl\dM(w\La,\La)\br
\simeq \F_i\bl\dM(w\La,v\La)\hconv \dM(v\La, \La)\br\\
&&\underset{*}{\simeq}\F_i\bl\dM(w\La,v\La)\br\hconv\F_i\bl\dM(v\La,  \La \br\\
&&\simeq \F_i\bl\dM(w\La,v\La)\br\hconv \dM(s_iv\La,s_i\La),
\eneqn
where $\underset{*}{\simeq}$ holds by Lemma \ref{lem:Frneq0}. 
Hence we have
$$\F_i\bl\dM(w\La,v\La)\br\simeq \dM(s_iw\La,s_iv\La).$$
\QED

\Prop \label{prop:simpletosimple}
If $M$ is a simple module in  $\catC_{s_iw_0}$, then $\F_i(M)$  is a simple module in $\catC^*_{s_iw_0}$. Moreover
the functor $\F_i$ induces a bijection between the set of classes of simple modules in $\catC_{s_iw_0}$ and that of $\catC^*_{s_iw_0}$.
\enprop

\Proof
Let $\underline{w_0} = s_{i_1} s_{i_2}\cdots s_{i_{l-1}} s_{i_l}$ be a reduced expression of $w_0$ with $i_1=i$.
Set
\eq \label{eq:cuspidals}
&&\ba{l}V_k\seteq \dM(s_{i_2}\cdots s_{i_k}  \La_{i_k}, s_{i_2}\cdots s_{i_{k-1}}  \La_{i_k} ) \qt{for} \quad 2 \le k \le l \qt{and} \\
S_k\seteq \dM(s_{i_1}s_{i_2}\cdots s_{i_k}  \La_{i_k}, s_{i_1}s_{i_2}\cdots s_{i_{k-1}}  \La_{i_k} ) \qt{for} \quad 1 \le k  \le l.\ea
\eneq 
Then by Lemma \ref{lem:FiM}, we have 
$$\F_i(V_k)=S_k \qt{for } \quad 2 \le k \le l.$$
Note that  $K(\catC_{s_iw_0})\vert_{q=1}$ is the polynomial ring $\Z[\;[V_2],\ldots, [V_l]\;]$ and 
$K(\catC_{w_0})\vert_{q=1}$ is the polynomial ring  $\Z[\;  [S_1], [S_2],\ldots, [S_l]\;]$,
  where $K(\catC_{w_0})$ denotes the Grothendieck ring  of
the abelian monoidal category $\catC_{w_0}$ and 
$K(\catC_{w_0})\vert_{q=1}\seteq K(\catC_{w_0})/(q-1)K(\catC_{w_0})$.

Recall that 
for any simple $M$ in $\catC_{w_0}=R\gmod$, there exists a unique $(a_1,\ldots, a_l) \in \Z^{l}_{\ge 0}$ such that 
$S_l^{\circ a_l} \conv S_{l-1}^{\circ a_{l-1}} \conv \cdots \conv  S_1^{\circ a_1}$
has a simple head isomorphic to $M$ up to a grading shift.
Since $\Es{i}(S_k)=0$ for $2\le k\le l$ and $S_1=L(i)$, the simple module $M$ belongs to $\catC^*_{s_iw_0}$ if and only if $a_1=0$. 
Hence the ring $K(\catC^*_{s_iw_0})|_{q=1}$ is the polynomial ring $\Z[\;[S_2],\ldots, [S_l]\;]$. 
 
It follows that  
the ring homomorphism $[\F_i] \cl K(\catC_{s_iw_o})\vert_{q=1}\to K(\tcatC^*_{s_iw_0})\vert_{q=1}$ is an isomorphism.
In particular, for any non-zero module $M$, $\F_i(M)$ is non-zero, and
$[\F_i]$ induces a bijection between the set of simple modules in $\catC_{s_iw_0}$ and that of $\catC^*_{s_iw_0}$. \QED

For a simple $R(\beta)$-module $M$,  set
\eqn
&& \wt(M) = - \beta, \\
&&\tF_i(M) = L(i)\hconv M, \qquad \tE_i(M) = \hd(\E{i}(M)),  \\
&& \ep_i(M) = \max \{ k \ge 0 \mid \E{i}^k M \not\simeq 0 \}, \ \  \ph_i(M) = \ep_i(M) +  \langle h_i, \wt(M)  \rangle,\\
&&\tF^*_i(M) = M \hconv L(i), \qquad \tE^*_i(M) = \hd(\Es{i}(M)),  \\
&& \ep^*_i(M) = \max \{ k \ge 0 \mid \Es{i}^k M \not\simeq 0 \}, \ \  \ph^*_i(M) = \ep^*_i(M) + \langle h_i, \wt(M)  \rangle.
\eneqn
Here we denote by $M$  its isomorphism class (as ungraded module) for simplicity.
Then the set of isomorphism classes of self-dual simple modules in $R\gmod$ together with  $(\wt, \tE_i,\tF_i,\ep_i,\ph_i)$  forms a $\g$-crystal which is isomorphic to the crystal basis $B(\infty)$ of $\Uqm$ (\cite{LV11}).

On the other hand, Yoshihisa Saito (\cite{Saito}) defined the isomorphism
$$\st{b\in B(\infty)\mid\eps_i(b)=0}
\isoto \st{b\in B(\infty)\mid\eps^*_i(b)=0}$$
given by $b\mapsto {\tf_i}^{  \ph_i^*(b)} {\te_i}^{\hskip 0.1em * \hskip 0.1em \ep_i^*(b)}b$.
Hence, to a simple $M\in R\gmod$ with $\eps_i(M)=0$,
we can associate a simple module
$$\sigma_i(M)\seteq{\tF_i}^{  \ph_i^*(M)} {\tE_i}^{\hskip 0.1em * \hskip 0.1em \ep_i^*(M)}M $$
which satisfies $\eps^*_i\bl\sigma_i(M)\br=0$.
We call it the \emph{Saito reflection of $M$ with respect to $i$}
(cf.\ \cite{Saito,Kato14,Kato20}).

\Prop \label{Prop: cat of saito refl}
Let $M$ be a simple module in $\catC_{s_iw_0}$. Then
$$\F_i(M) \simeq \sigma_i(M)$$
as an ungraded module. 
\enprop
\begin{proof}
By \cite[Corollary 2.26]{TW16}, it is enough to show that $\F_i(V_k) \simeq S_k\simeq \sigma_i(V_k)$ for $2\le k\le l$, where $V_k$ and $S_k$ are the modules in \eqref{eq:cuspidals}.

Let $M$ be a simple module in $R\gmod$ with $\eps_i(M)=0$.
Then, we have
\eqn
\sigma_i(M) ={\tF_i}^{  \ph_i^*(M)} {\tE_i}^{\hskip 0.1em * \hskip 0.1em \ep_i^*(M)}M 
\simeq {\tE_i}^{\hskip 0.1em * \hskip 0.1em s-\ang{h_i,\wt(M)} }  {\tF_i}^{ s} M 
\eneqn
for any $s\ge \vphi_i^*(M)$ by  \cite[Proposition 2.16, Theorem 2.17]{KKOP22}.
Let $2\le k\le l$ and $\la=s_{i_2}\cdots s_{i_{k-1}}\La_{i_k}$, $\mu=s_{i_2}\cdots s_{i_k}\La_{i_k}$ so that $V_k=\dM(\la,\mu)$. Then  $\ang{h_i,\la}\ge 0$, $\ang{h_i,\mu}\ge 0$ and
\eqn 
 \vphi_i^*(V_k) = \eps_i^*(V_k)+\ang{h_i,\wt(V_k)} && \le \ang{h_{i},\mu}  + \ang{h_{i},\la-\mu} =\ang{h_{i},\la},
\eneqn
where the middle inequality follows from \cite[Lemma 9.1.5]{KKKO18}.

Hence  by taking $s=\ang{h_{i},\la} $,  we have
\eqn
\sigma_i(V_k)
\simeq {\tE_i}^{\hskip 0.1em * \hskip 0.1em s-\ang{h_i,\wt(V_k)} }  {\tF_i}^{ s}   \dM(\la,\mu)&&= 
{\tE_i}^{\hskip 0.1em * \hskip 0.1em \ang{h_i,\mu} }  {\tF_i}^{ \ang{h_{i},\la}}   \dM(\la,\mu) \\
&&\simeq {\tE_i}^{\hskip 0.1em * \hskip 0.1em \ang{h_i,\mu} }\dM(s_i\la,\mu)
\simeq
\dM(s_i\la,s_i\mu) \simeq S_k,
\eneqn
as desired.
\end{proof}

\Lemma
\label{lem:LaMLj}
  For any $j\in I$, we have
$$\La(\dM(s_iw_0\La,\La),\ang{j})=
\La(\dM(w_0\La,s_i\La),\SW_j).$$
\enlemma
\Proof
We have $\La(\dM(s_iw_0\La,\La),\ang{j})=(s_iw_0\La+\La,\al_j)$  by \cite[(5.1)]{KKOP21} 
and $\La(\dM(w_0\La,s_i\La),\SW_j)=(w_0\La+s_i\La, s_i\al_j)$  by \cite[Proposition 3.27]{KKOP21}. 
\QED
\Prop \label{Prop: factor th}
The functor $\F_i\cl R\gmod\to \tcatC^*_{s_iw_0}$
factors through $\tcatC_{s_iw_0}$:
$$\xymatrix@C=8ex{
R\gmod\ar[d]_{\Qt_{s_iw_0}}\ar[dr]^{\F_i}\\
\tcatC_{s_iw_0}\ar[r]_{\refl_i}& \tcatC^*_{s_iw_0}.
}$$
\enprop
\begin{proof}
By Lemma \ref{lem:FiM}, we have $\F_i(\Ctr_\La) \simeq \Ctr^*_\La$ for any $\La \in \pwtl$. 
 By the universal property of localization (\cite[Theorem 2.7]{KKOP21}), it remains to show that $\F_i(\coR_{\Ctr_\La}(X))\cl \F_i(\Ctr_\La) \conv \F_i(X) \to \F_i(X) \conv \F_i(\Ctr_\La)$ is an isomorphism for any $X\in R\gmod$ and $\La \in \pwtl$. 

Note that 
\bna
\item if $\F_i(\coR_{\Ctr_\La}(M))$ and $\F_i(\coR_{\Ctr_\La}(N))$ are isomorphisms for $M,N\in R\gmod$, then so is $\F_i(\coR_{\Ctr_\La}(M\conv N))$.
 \item for a morphism $f\cl M\to N$, if $\F_i(\coR_{\Ctr_\La}(M))$, $\F_i(\coR_{\Ctr_\La}(N))$ are isomorphisms, then  so is
   $\F_i(\coR_{\Ctr_\La}(\Coker(f)))$,
 \item for an exact sequence $0\to L\to M\to N\to 0$, if
   $\F_i(\coR_{\Ctr_\La}(L))$ and $\F_i(\coR_{\Ctr_\La}(N))$
   are isomorphism, then so is $\F_i(\coR_{\Ctr_\La}(M))$.
   \ee 
Thus it is enough to show that $\F_i(\coR_{\Ctr_\La}(\ang{j}))$ is an isomorphism for any $j\in I$
and it follows from Lemma \ref{lem:Frneq0} and Lemma \ref{lem:LaMLj}.
\end{proof}

\Th \label{Thm: relf equi}
The monoidal functor $\refl_i\cl \tcatC_{s_iw_0}\to\tcatC^*_{s_iw_0}$
is an equivalence of categories.
\enth

\Proof
We can define 
$\F_i^*\cl R\gmod\to\tcatC_{s_iw_0}$ by the Schur-Weyl datum
$\st{\hSW^*_j}_{j\in I}$ where
\eqn
\SW_j^*=\psi_*(\SW_j)\simeq
\bc
\D^{-1}\bl \Qt_{s_iw_0}(\ang{i})\br\simeq \Ctr_i^{-1}\conv (\Es{i}\Ctr_{i})
&\text{if $j=i$,}\\
\ang{j}\hconv \ang{i^{-\ang{h_i,\al_j}}}&\text{if $j\not=i$,}
\ec
\eneqn
and
\eqn
\hSW_j^*=\psi_*(\hSW_j)\simeq
\bc
\Da^{-1}\bl\Qt_{s_iw_0}(\ang{i}_{z_i})\br
&\text{if $j=i$,}\\
\bl\ang{j}\hconv\ang{i^{-\ang{h_i,\al_j}}}\br_{z_j}&\text{if $j\not=i$.}
\ec
\eneqn
Here the functor $\Qt_{s_iw_0}\cl\Pro(R\gmod) \to \Pro(\tcatC_{s_iw_0})$ is induced by $\Qt_{s_iw_0}\cl R\gmod \to \tcatC_{s_iw_0}$.
Then,
the functor $\F_i^*\cl R\gmod\to \tcatC_{s_iw_0}$
factors through $\tcatC^*_{s_iw_0}$:
$$\xymatrix@C=8ex{
R\gmod\ar[d]_{\Qt^*_{s_iw_0}}\ar[dr]^{\F^*_i}\\
\tcatC^*_{s_iw_0}\ar[r]_{\refl^*_i}& \tcatC_{s_iw_0}.
}$$
Note that $\F_i^*$ is isomorphic to the composition
$$R\gmod\To[\psi_*]R\gmod\To[\F_i]\tcatC^*_{s_iw_0}\To[\psi_*]\tcatC_{s_iw_0}.$$
 We claim that the composition
$$\Modgc(R)\To \Pro(R\gmod)\To[\Pro(\Qt^*_{s_iw_0})]  \Pro(\tcatC_{s_iw_0})\To[\Pro(\refl^*_i)]
 \Pro(\tcatC_{s_iw_0}) \To[\Pro(\refl_i)] \Pro(\tcatC^*_{s_iw_0})
$$
is isomorphic to the functor associated with the duality datum 
 $$(\{ \Qt^*_{s_iw_0}(\lan j \ran_{z_j}) \}_{j\in I},\ \{ \Rre_{\Qt^*_{s_iw_0}(\lan j \ran_{z_j}),\Qt^*_{s_iw_0}(\lan k \ran_{z_k}) } \}_{j,k\in I})$$ 
 which is of  type $\cartan$. 
Then  by restriction to $R\gmod$,  we have
$$
\refl_i\circ \refl^*_i \circ \Qt^*_{s_iw_0} \simeq \Qt^*_{s_iw_0},
$$
and by the universal property of the functor $\Qt^*_{s_iw_0}$,  we get 
$\refl_i\circ \refl^*_i\simeq \id_{\tcatC_{s_iw_0}}$, as desired.

By Lemma \ref{lem:uniqueness},  it is enough to show that 
$$
(\refl_i\circ \refl^*_i \circ \Qt^*_{s_iw_0})(\lan j\ran_{z_j}) 
 \simeq \Qt^*_{s_iw_0}(\lan j\ran_z) \quad \text{for all} \ j\in I.
$$

Since $(\refl_i\circ \refl^*_i \circ \Qt^*_{s_iw_0})(\lan j\ran_{z_j}) \simeq (\refl_i\circ\F^*_i)(\lan j\ran_{z_j}) \simeq
\refl_i (\hSW^*_j)$,
it is enough to show

\eq\refl_i(\hSW^*_j)\simeq  \Qt^*_{s_iw_0}(\lan j\ran_{z_j}) \quad \text{for all} \ j\in I \quad  \text{up to a grading shift.}  \label{eq:ij}
\eneq

If $j=i$,  then we have
\eqn
\refl_i (\hSW^*_i) = \refl_i (\Da^{-1} (\Qt_{s_iw_0} (\ang{i}_{z_i}) ) )
\simeq \Da^{-1} ( \refl_i (\Qt_{s_iw_0} (\ang{i}_{z_i}) ) ) \\
\simeq  \Da^{-1} ( \F_i (\ang{i}_{z_i})  )
\simeq  \Da^{-1} (\hSW_i  )  
\simeq  \Qt_{s_iw_0}^*(\ang{i}_{z_i}). 
\eneqn

When $\ang{h_i,\al_j}=0$, \eqref{eq:ij} is trivial.

If $\ang{h_i,\al_j}=-1$,  then we have
\eqn
\hSW^*_j\simeq \ang{j}_{z_j} \hconv_{z_j} \ang{i}_{z_j^c},
\eneqn 
where $c= -\ang{h_j,\al_i}\in \Z_{\ge 1}$.
Thus we have an epimorphism 
\eqn
\refl_i(\ang{j}_{z_j} \conv[z_j] \ang{i}_{z_j^c} ) \epito \refl_i(\hSW^*_j).
\eneqn
On the other hand,  we have
\eqn 
\refl_i(\ang{j}_{z_j} \conv[z_j] \ang{i}_{z_j^c} )  \simeq
\refl_i(\ang{j}_{z_j}) \conv[z_j] \refl_i(\ang{i}_{z_j^c})  \simeq
\hSW_j \conv[z_j] \hSW_i\vert_{z_i=z_j^c}   \epito \ang{j}_{z_j},
\eneqn
where the last epimorphism is the one in \eqref{eq:KjKitoj}.  Hence we have
$\refl_i(\hSW^*_j)\simeq \ang{j}_{z_j}$ by Proposition \ref{prop:simplehead}.

If  $\ang{h_i,\al_j}>1$,  then 
then we have
\eqn
\hSW^*_j \vert_{z_j=w^c} \simeq \ang{j}_{w^c} \hconv_{w} \Laa \simeq  \ang{j}_{w^c} \hconv_{w} (\ang{i}_w\conv[w]  \Laa'),
\eneqn 
where $c=-\ang{h_i,\al_j}$ and $\Laa, \Laa'$ are the ones in \eqref{eq:LL'}.
Hence there is an epimorphism
$$\refl_i(\ang{j}_{w^c}) \conv[w] \refl_i(\ang{i}_w) \conv[w] \refl_i(\Laa') \epito \refl(\hSW^*_j \vert_{z_j=w^c}). $$
On the other hand,  by \eqref{eq:KjKiLpj} we have
\eqn 
\refl_i(\ang{j}_{w^c}) \conv[w] \refl_i( \ang{i}_w) \conv[w] \refl_i(\Laa') 
\simeq (\hSW_j\vert_{z_j=w^c} \conv[w] \hSW_i\vert_{z_i=w})\conv[w] \refl_i(\Laa')  \\
\epiTo[\eqref{eq:KjKiLpj}] (\Laa' \hconv_w \ang{j}_{w^c}) \conv[w] \refl_i(\Laa') 
\epito (\Laa' \hconv_w \ang{j}_{w^c}) \hconv_w \refl_i(\Laa') 
\simeq \ang{j}_{w^c},
\eneqn
where the last isomorphism follows from that $\refl_i(\Laa')\simeq \Da(\Laa')$.
Hence we have
$\refl_i(\hSW^*_j)\simeq \ang{j}_{z_j}$ by Proposition \ref{prop:simplehead}, as desired. 
\QED

\Conj \label{Conj: refl} 
 The monoidal functor $\refl_i\cl \tcatC_{s_iw_0}\to\tcatC^*_{s_iw_0}$ induces an equivalence of categories
$\catC_{s_iw_0} \to \catC^*_{s_iw_0}$. 
\enconj
We know already that $\refl_i$ sends simples in $\catC_{s_iw_0}$
to simples in $ \catC^*_{s_iw_0}$.
However, we do not know $\refl_i(\catC_{s_iw_0})\subset\catC^*_{s_iw_0}$.
Remark that $\catC^*_{s_iw_0}$ is not stable by extensions in $\tcatC^*_{s_iw_0}$
(see Remark~\ref{rem:ext}).

\Rem\label{rem:A2}
In the $A_2$-case, we have $\de(\SW_j,\SW_k)=0\not=\de(\ang{j},\ang{k})$.
We define $\F_i$ as follows.
Take $I=\st{1,2}$, $Q_{1,2}(t_1.t_2)=t_1-t_2$ and $i=1$.
Then $s_iw_0=s_2s_1$.

The renormalized R-matrix
$\Rre_{j,k}\cl \ang{j}_{z_j}\conv \ang{k}_{z_k}\to \ang{k}_{z_k}\conv \ang{j}_{z_k}$ is given
by $\ang{j}_{z_j}\etens \ang{k}_{z_k}\mapsto \tau_1\bl \ang{k}_{z_k}\etens\ang{j}_{z_k}\br$ for $j\not=k$.
 Since $\de(K_1,K_2)=0$,
we can choose $\Rmat_{j,k}\cl \hSW_j\conv \hSW_k\to \hSW_k\conv \hSW_j$  ($j\not=k$)
such that
$\Rmat_{j,k}\circ\Rmat_{k,j}=\id_{\hSW_k\conv\hSW_j}$.
Then, 
we define $\F_1\cl R\gmod\to \tcatC_{s_iw_0}^* $ by:
$$\text{$\F_1(\ang{j} _{z_j})=\hSW_j$ ($j=1,2$),
$\F_1(\Rre_{1,2})=Q_{1,2}(z_1,z_2)\Rmat_{1,2}$
and $\F_1(\Rre_{2,1})=\Rmat_{2,1}$.}$$
Then it induces an equivalence
$\refl_1\cl\tcatC_{s_iw_0}\isoto\tcatC_{s_iw_0}^*$.

Set $\ang{12} := \ang{1}\hconv \ang{2}$ and  $\ang{21} := \ang{2}\hconv \ang{1}$. 

Note that $\Qt_{s_iw_0}(\ang{1})\simeq \ang{2}\conv\ang{21}^{-1}$, $\Qt_{s_iw_0}(\ang{12})\simeq0$.
The functor $\F_1$ sends:
\eqn
&&\begin{aligned}[t]
\ang{1}&\longmapsto\ang{2}\conv\ang{12}^{-1},\\
\ang{2}&\longmapsto\ang{12},
\end{aligned}
\hs{2ex}\begin{aligned}[t]
\ang{12}&\longmapsto0,\\
\ang{21}&\longmapsto\ang{2}.
\end{aligned}
\eneqn

\enrem

\section{Examples of $\F_i$}
Let us give examples of $\F_i$.
We ignore  grading shifts.
\subsection{$\F_i\cl R\gmod\to\tcatC_{s_iw_0}^*$}
\subsubsection{$A_2$}
$i=1$
\eqn
&&\begin{aligned}[t]
\ang{1}&\mapsto\ang{2}\conv\ang{12}^{-1}\\
\ang{2}&\mapsto\ang{12}
\end{aligned}
\hs{2ex}\begin{aligned}[t]
\ang{12}&\mapsto0\\
\ang{21}&\mapsto\ang{2}
\end{aligned}
\eneqn

\subsubsection{$A_3$}
\bnum
\item
$i=1$.
\eqn
&&\begin{aligned}[t]
\ang{1}&\mapsto\ang{23}\conv\ang{123}^{-1}\\
\ang{2}&\mapsto\ang{12}\\
\ang{3}&\mapsto\ang{3}
\end{aligned}
\hs{2ex}\begin{aligned}[t]
\ang{12}&\mapsto\ang{2312}\conv\ang{123}^{-1}\\
\ang{21}&\mapsto\ang{2}\\
\ang{23}&\mapsto\ang{123}\\
\ang{32}&\mapsto\ang{132}
\end{aligned}
\hs{2ex}
\begin{aligned}[t]
\ang{132}&\mapsto\ang{3}\conv\ang{2132}\conv\ang{123}^{-1}\\
\ang{213}&\mapsto\ang{23}\\
\ang{123}&\mapsto0\\
\ang{321}&\mapsto\ang{32}\\
\ang{2132}&\mapsto\ang{2132}
\end{aligned}\\
&&
\raisebox{2.3ex}[.5ex][.5ex]{$\ang{321}^{-1}\conv\ang{32}\mapsto \ang{1}$}
\eneqn

\item
$i=2$.
\eqn
&&\begin{aligned}[t]
\ang{1}&\mapsto\ang{21}\\
\ang{2}&\mapsto\ang{13}\conv\ang{213}^{-1}\\
\ang{3}&\mapsto\ang{23}
\end{aligned}
\hs{1.5ex}\begin{aligned}[t]
\ang{12}&\mapsto\ang{1}\\
\ang{21}&\mapsto\ang{1}\conv\ang{321}\conv\ang{213}^{-1}\\
\ang{23}&\mapsto\ang{3}\conv\ang{123}\conv\ang{213}^{-1}\\
\ang{32}&\mapsto\ang{3}
\end{aligned}
\hs{1.5ex}
\begin{aligned}[t]
\ang{132}&\mapsto\ang{213}\\
\ang{213}&\mapsto\ang{123}\conv\ang{321}\conv\ang{213}^{-1}\\
\ang{123}&\mapsto\ang{123}\\
\ang{321}&\mapsto\ang{321}\\
\ang{2132}&\mapsto0
\end{aligned}
\eneqn

\ee

\subsubsection{$C_2$}
$\xymatrix@R=-1.5ex{{\conv}\ar@2{<-}[r]&{\conv}\\1&2}$
\bnum
\item
$i=1$.
\eqn
&&\begin{aligned}[t]
\ang{1}&\mapsto\ang{2}\conv\ang{12}^{-1}\\
\ang{2}&\mapsto\ang{1^22}\\
\ang{12}&\mapsto\ang{21^22}\conv\ang{12}^{-1}\\
\ang{21}&\mapsto\ang{12}\\
\end{aligned}
\hs{5ex}\begin{aligned}[t]
\ang{1^22}&\mapsto\ang{2}\conv\ang{21^22}\conv\ang{12}^{-2}\\
\ang{21^2}&\mapsto\ang{2}\\
\ang{121}&\mapsto0\\
\ang{21^22}&\mapsto\ang{21^22}
\end{aligned}
\eneqn

\item
$i=2$.

\eqn
&&\begin{aligned}[t]
\ang{1}&\mapsto\ang{21}\\
\ang{2}&\mapsto\ang{1^2}\conv\ang{21^2}^{-1}\\
\ang{12}&\mapsto\ang{1}\\
\ang{21}&\mapsto\ang{1}\conv\ang{121}\conv\ang{21^2}^{-1}
\end{aligned}
\hs{5ex}\begin{aligned}[t]
\ang{1^22}&\mapsto\ang{21^2}\\
\ang{21^2}&\mapsto\ang{121}^2\conv\ang{21^2}^{-1}\\
\ang{121}&\mapsto\ang{121}\\
\ang{21^22}&\mapsto0
\end{aligned}
\eneqn

\ee

\subsection{$\F_i\cl \tC_{w_0}\to\tC_{w_0}$}

\subsubsection{$A_2$}

$i=12$, i.e.\ $\F_1\F_2$:
\eqn
&&\begin{aligned}[t]
\ang{1}&\mapsto\D\ang{2}=\ang{1}\conv\ang{21}^{-1}\\
\ang{2}&\mapsto\D\ang{1}=\ang{2}\conv\ang{12}^{-1}
\end{aligned}
\hs{7ex}\begin{aligned}[t]
\ang{12}&\mapsto\ang{21}^{-1}\\
\ang{21}&\mapsto\ang{12}^{-1}
\end{aligned}
\eneqn

\subsubsection{$A_3$}

\bnum
\item
$i=2$.

\scalebox{.85}{
\parbox{\textwidth}{
\eqn
&&\begin{aligned}[t]
\ang{1}&\mapsto\ang{21}\\
\ang{2}&\mapsto\ang{132}\conv\ang{2132}^{-1}\\
\ang{3}&\mapsto\ang{23}
\end{aligned}
\hs{1.5ex}\begin{aligned}[t]
\ang{12}&\mapsto\ang{1}\\
\ang{21}&\mapsto\ang{12}\conv\ang{321}\conv\ang{2132}^{-1}\\
\ang{23}&\mapsto\ang{32}\conv\ang{123}\conv\ang{2132}^{-1}\\
\ang{32}&\mapsto\ang{3}
\end{aligned}
\hs{1.5ex}
\begin{aligned}[t]
\ang{132}&\mapsto\ang{213}\\
\ang{213}&\mapsto\ang{2}\conv\ang{123}\conv\ang{321}\conv\ang{2132}^{-1}\\
\ang{123}&\mapsto\ang{123}\\
\ang{321}&\mapsto\ang{321}\\
\ang{2132}&\mapsto\ang{123}\conv\ang{321}\conv\ang{2132}^{-1}
\end{aligned}
\eneqn
}}

\item
$i=13$

\scalebox{.85}{
\parbox{\textwidth}{
\eqn
&&\begin{aligned}[t]
\ang{1}&\mapsto\D{1}=\ang{23}\conv\ang{123}^{-1}\\
\ang{2}&\mapsto\ang{132}\\
\ang{3}&\mapsto\D{3}=\ang{21}\conv\ang{321}^{-1}
\end{aligned}
\hs{1.5ex}\begin{aligned}[t]
\ang{12}&\mapsto\ang{3}\conv\ang{2132}\conv\ang{123}^{-1}\\
\ang{21}&\mapsto\ang{32}\\
\ang{23}&\mapsto\ang{12}\\
\ang{32}&\mapsto\ang{1}\conv\ang{2132}\conv\ang{321}^{-1}
\end{aligned}
\hs{1.5ex}
\begin{aligned}[t]
\ang{132}&\mapsto\ang{213}\conv\ang{2132}\conv\ang{123}^{-1}\conv\ang{321}^{-1}\\
\ang{213}&\mapsto\ang{2}\\
\ang{123}&\mapsto\ang{2132}\conv\ang{123}^{-1}\\
\ang{321}&\mapsto\ang{2132}\conv\ang{321}^{-1}\\
\ang{2132}&\mapsto\ang{2132}
\end{aligned}
\eneqn
}}
\ee

\subsubsection{$C_2$}
$\xymatrix@C=4ex@R=-1.5ex{{\conv}\ar@2{<-}[r]&{\conv}\\1&2}$
\bnum
\item
$i=1$.
\eqn
&&\begin{aligned}[t]
\ang{1}&\mapsto\ang{21}\conv\ang{121}^{-1}\\
\ang{2}&\mapsto\ang{1^22}\\
\ang{12}&\mapsto\ang{1}\conv\ang{21^22}\conv\ang{121}^{-1}\\
\ang{21}&\mapsto\ang{12}\\
\end{aligned}
\hs{5ex}\begin{aligned}[t]
\ang{1^22}&\mapsto\ang{21^2}\conv\ang{21^22}\conv\ang{121}^{-2}\\
\ang{21^2}&\mapsto\ang{2}\\
\ang{121}&\mapsto\ang{21^22}\conv\ang{121}^{-1}\\
\ang{21^22}&\mapsto\ang{21^22}
\end{aligned}
\eneqn

\item
$i=2$.

\eqn
&&\begin{aligned}[t]
\ang{1}&\mapsto\ang{21}\\
\ang{2}&\mapsto\ang{1^22}\conv\ang{21^22}^{-1}\\
\ang{12}&\mapsto\ang{1}\\
\ang{21}&\mapsto\ang{12}\conv\ang{121}\conv\ang{21^22}^{-1}
\end{aligned}
\hs{5ex}\begin{aligned}[t]
\ang{1^22}&\mapsto\ang{21^2}\\
\ang{21^2}&\mapsto\ang{2}\conv\ang{121}^2\conv\ang{21^22}^{-1}\\
\ang{121}&\mapsto\ang{121}\\
\ang{21^22}&\mapsto\ang{121}^2\conv\ang{21^22}^{-1}
\end{aligned}
\eneqn

\noi
$(\refl_1\circ\refl_2)^2\simeq(\refl_2\circ\refl_1)^2$.
\ee

\end{document}